\newtheorem{thm}{Theorem}[section]
\newtheorem{lem}[thm]{Lemma}
\newtheorem{prop}[thm]{Proposition}
\newtheorem{cor}[thm]{Corollary}
\theoremstyle{definition}
\newtheorem{defn}[thm]{Definition}
\newtheorem{remk}[thm]{Remark}
\newtheorem{exm}[thm]{Example}
\newtheorem{conj}[thm]{Conjecture}
\numberwithin{equation}{section}
\newcommand{\Aff}{\mathbb{A}}      
\newcommand{\C}{\mathbb{C}}
\newcommand{\R}{\mathbb{R}}
\newcommand{\Z}{\mathbb{Z}}
\newcommand{\Q}{\mathbb{Q}}
\newcommand{\PP}{\mathbb{P}}
\newcommand{\N}{\mathbb{N}}
\newcommand{\TT}{\mathbb{T}}
\newcommand{\G}{\mathbb{G}}  
\newcommand{\m}{\mathrm{m}}
\newcommand{\J}{\mathcal J}
\newcommand{\K}{\mathcal K}
\newcommand{\ord}{\mathrm{ord}}
\newcommand{\id}{\mathrm{id}}
\newcommand{\Aut}{\operatorname{Aut}}
\newcommand{\End}{\operatorname{End}}
\newcommand{\tth}{^{\operatorname{th}}}
\author{Annie Carter}
\author{Matilde Lal\'in}
\author{Michelle Manes}
\author{Alison Beth Miller} 
\author{Lucia Mocz}
\address{Annie Carter: Department of Mathematics, University of California San Diego, 9500 Gilman Drive \# 0112, La Jolla, CA  92093-0112, USA}\email{a4carter@ucsd.edu}
\address{Matilde Lal\'in:  D\'epartement de math\'ematiques et de statistique,
                                    Universit\'e de Montr\'eal.
                                    CP 6128, succ. Centre-ville.
                                     Montreal, QC H3C 3J7, Canada}\email{mlalin@dms.umontreal.ca}
\address{Michelle Manes: Department of Mathematics, University of Hawaii, 2565 McCarthy Mall, Honolulu, HI 96822, USA}\email{mmanes@math.hawaii.edu}
\address{Alison Beth Miller: Mathematical Reviews, 416 Fourth St., Ann Arbor, MI 48103, USA}\email{alimil@umich.edu }
\address{Lucia Mocz: Department of Mathematics,
University of Chicago, 5734 S. University Avenue, Room 108
Chicago, IL, 60637, USA}\email{lmocz@math.uchicago.edu }
\title{Two-variable polynomials with dynamical Mahler measure zero}
\subjclass[2020]{Primary 11R06; Secondary 11G50, 37P15, 37P30}
\keywords{Mahler measure, dynamical Mahler measure, polynomial, preperiodic points, equidistribution}
\begin{document}

	\begin{abstract}
We discuss several aspects of the dynamical Mahler measure for multivariate polynomials. We prove a weak dynamical version of Boyd--Lawton formula and we characterize the polynomials with integer coefficients having dynamical Mahler measure zero both for the case of one variable (Kronecker's lemma) and for the case of two variables, under the assumption that the dynamical version of Lehmer's question is true. 
	\end{abstract}

	\maketitle
	

\section{Introduction}

This paper considers a generalized version of Mahler measure attached to a discrete dynamical system. 
In~\cite{Szpiro-Tucker, PST}, the authors consider a single-variable generalized Mahler measure, which we extend here to multivariate polynomials.
We begin with some background on both Mahler measure and discrete dynamical systems as well as the connections between them.

\subsection{Mahler measure}
The (logarithmic) {\bf Mahler measure} of a non-zero rational function $P \in \C(x_1, \dots
x_n)^\times$ is defined by
\[\m(P) = \frac{1}{(2\pi i)^n} \int_{\TT^n} \log | P(z_1, \cdots, z_n)| \frac{d
z_1}{z_1} 
\cdots \frac{d z_n}{z_n},\]
where $\TT^n = \{ (z_1, \dots, z_n) \in \C^n \, :\, |z_1| = \cdots = |z_n| = 1 \}$ is
the unit torus.  

In the particular case of a single variable polynomial, Jensen's formula implies that if
\[P(x) = a \prod_i(x-\alpha_i),\]
then
\begin{equation}\label{eq:onevariable}
\m(P)= \log |a| + \sum_{|\alpha_i|>1} \log |\alpha_i|.
\end{equation}

The single variable polynomial version  was first introduced by Lehmer \cite{Le} as a useful tool in a method for producing large prime numbers by generalizing Mersenne's sequences. 

It is natural to ask the question about the range of possible values of Mahler measure. For $P\in \Z[x]$, Kronecker's lemma \cite{Kronecker} implies that $\m(P)=0$ if and only if $P$ is (up to a sign) monic and a product of cyclotomic polynomials and a monomial. 

From the context of finding large prime numbers, Lehmer posed the following question:\\
\begin{quotation}
{\em Given $\varepsilon >0$, can we find a polynomial $P \in \Z[x]$ such that $0<\m(P)<\varepsilon$?}\\
\end{quotation}
Lehmer showed that
\[ \m(x^{10}+x^9-x^7-x^6-x^5-x^4-x^3+x+1 ) =  0.162357612\dots\]
and declared that this was the polynomial with the smallest positive measure that he was able to find. 
To this day, Lehmer's question remains unanswered and Lehmer's polynomial remains the one with the smallest known positive Mahler measure.
Some key progress on this question can be found, for example, in the works of Breusch \cite{Breusch} and Smyth \cite{Smyth-nonreciprocal}, who proved that the Mahler measures of non-reciprocal polynomials have a lower bound;  Dobrowolski \cite{Dobrowolski}, who gave the first lower bound that approaches zero when the degree goes to infinity; and the recent proof of Dimitrov \cite{Dimitrov} of the Schinzel--Zassenhaus conjecture. 

 Mahler measure for multivariate polynomials was first considered by Mahler \cite{Mah} in connection to heights and their applications in transcendence theory. 
Later Smyth \cite{Smyth1, Boyd-speculations} obtained the first exact formulas involving Dirichlet $L$-functions and the Riemann zeta function, such as 
\begin{align*}
\m(1+x+y) =& \frac{3 \sqrt{3}}{4 \pi} L(\chi_{-3},2),\\
\m(1+x+y+z) =& \frac{7}{2\pi^2}\zeta(3).
\end{align*}
More recently, various formulas have been proven or conjectured involving $L$-functions associated to more complicated arithmetic-geometric objects such as elliptic curves. For example, the following formula was conjectured by Deninger  \cite{Deninger} and Boyd \cite{Bo98}  and proven by Rogers and Zudilin \cite{RZ14}
\[\m\left(x +\frac{1}{x}+y+\frac{1}{y}+1\right)=\frac{15}{4 \pi^2}L(E_{15},2),\]
where $E_{15}$ is an elliptic curve of conductor 15. 

We finish the discussion on classical Mahler measure by recalling a result due to Boyd \cite{Boyd-speculations} and Lawton \cite{Lawton}. If $P\in \C(x_1,\dots,x_n)^\times$, then 
\begin{equation}\label{eq:BL}
\lim_{q({\bf k})\rightarrow \infty} \m (P(x,x^{k_2},\dots,x^{k_n}))=\m(P(x_1,\dots,x_n)),
\end{equation}
where 
\[q({\bf k})=\min \left\{ H({\bf s}) : {\bf s}=(s_2,\dots,s_n) \in \Z^{n-1}, {\bf s} \not = (0,\dots,0), \mbox{ and }\sum_{j=2}^n s_j k_j =0\right\}\]
where $H({\bf s})=\max\{|s_j|: 2\leq j \leq n\}$.  Intituively, the above limit is taken while $k_2,\dots,k_n$ go to infinity independently from each other. 

This result expresses the Mahler measure of a multivariate polynomial in terms of the Mahler measure of a single variable polynomial. It sheds light into the Lehmer conjecture for multivariate polynomials, since the existence of any such polynomial with small Mahler measure would immediately imply the existence of infinitely many single variable polynomials with small Mahler measure.

\subsection{Arithmetic dynamics}
In classical (holomorphic) dynamics, one studies topological and analytic
properties of orbits of points under iteration of self-maps  of $\C$ or, more generally, of a complex manifold. In the younger field of arithmetic dynamics, the self-maps act on number theoretic objects (for example, algebraic varieties) and the objects and maps are defined over fields of number theoretic interest 
(number fields, function fields, finite fields, non-archimedean fields, etc.). 

We fix the following notation: Let $X$ be a set, possibly with additional structure. 
For $f: X \to X$  and  $L \in \Aut(X)$, we write 
\begin{equation}\label{def:conj-notation}
f^n = \underbrace{f\circ f \circ \cdots \circ f}_{\text{$n$-fold composition}},
\quad\text{ and }\quad f^L := L^{-1}\circ f\circ L.
\end{equation}
This conjugation is a natural dynamical equivalence relation in that it respects iteration; that is, $(f^L)^n = (f^n)^L$.  In the sequel, we are concerned with polynomial maps $f: \C \to \C$, so we consider equivalence up to conjugation by affine maps $L(z) = az+b \in \C[z]$ with $a\neq 0$.

Let $\alpha \in X$. The forward orbit of $\alpha$ under $f$ is the set $\mathcal{O}_f(\alpha) = \{ f^n(\alpha) \, :\, n \geq 0 \}$.
 Questions in arithmetic dynamics are often motivated by an analogy between arithmetic geometry and dynamical systems in which, for example, rational and integral points on varieties correspond to rational and integral points in orbits, and torsion points on abelian varieties correspond to preperiodic points (points with finite orbit). See~\cite{Silverman-arithmetic-dynamical} for more comprehensive background and motivation and~\cite{CurrentTrends} for a survey of recent progress in the field of arithmetic dynamics.

Two complementary sets play an important role in holomorphic dynamics: The {\bf Fatou set}  of $f$ is the maximal open set on which the family of iterates  $\{f^n \, :\, n\geq 1\}$ is 
equicontinuous (considered as maps on $\PP^1(\C)$); its complement is the {\bf Julia set} of $f$, which we denote $\J_f$. Informally, the Julia set is where the interesting dynamics happens.
For a polynomial $f: \C \to \C$, we may define the Julia set a bit more concretely.

Let $f\in \C[z]$. The {\bf filled Julia set} of $f$ is
 \[\K_f=\{z\in \C \, :\, f^n(z)\not \rightarrow \infty \mbox{ as } n\rightarrow \infty\}.\]
 The {\bf Julia set} of $f$ is the boundary of the filled Julia set. That is, $\J_f = \partial\K_f$.
It follows from these definitions that for a polynomial $f\in \C[z]$, both $\K_f$ and $\J_f$ are compact.
In dynamical Mahler measure, the Julia set $\J_f$ for a general polynomial $f$ will play the role of the unit torus $\TT$.

Two families of polynomials play an important role in this work: The pure power maps $f(z)=z^d$ and the Chebyshev polynomials. Both of these families have special dynamical properties that arise because they are related to endomorphisms of algebraic groups.

Consider the multiplicative group $\G_m$ where for a field $K$,  the $K$-valued points are $\G_m(K) = K^*$.  The endomorphism ring of $\G_m$ is $\Z$:
\begin{align*}
\Z &\to \End(\G_m)\\
d &\mapsto z^d.
\end{align*}
 Iteration of pure power maps is particularly easy to understand:
\[
f(z) = z^d \quad \text{ means } \quad f^n(z) = z^{d^n}.
\]
For $d\geq 2$ and $K\subseteq \C$ we have three cases: If $|\alpha| >1$, then $|\alpha^{d^n}|  \to \infty $ with $n \to \infty$.
If $|\alpha| <1$, then $|\alpha^{d^n}|  \to 0  $ with $n \to \infty$.
If $|\alpha| =1$, then $|\alpha^{d^n}|  = 1 $ for all $n$.
So for  pure power maps, we can understand the Julia sets completely: $\K_f$ is the unit disc, and $\J_f$ is the unit circle.

To give another example, since the automorphism of $\G_m$ given by $z\mapsto z^{-1}$ commutes with the power map $z \mapsto z^d$, the power map descends to an endomorphism of $\Aff^1$, which we denote $T_d$, the $d\tth$ Chebyshev polynomial.\footnote{Classically the Chebyshev polynomials are normalized so that $\widetilde{T}_d(\cos t) = \cos(dt)$.  The two normalizations satisfy $\widetilde{T}_d(w) = \frac 1 2 T_d (2w)$. }

\def\zd{{z\mapsto z^d}}\def\Td{{w \mapsto T_d(w)}} \def\zinverse{{z \mapsto z+z^{-1}}}
\begin{equation*}
\begin{CD}
\G_m @>\zd>> \G_m \\
@VVV @VVV\\
\G_m /\{z = z^{-1} \}@>\zd>> \G_m / \{z = z^{-1}\} \\
@VV\zinverse V @VV\zinverse V\\
\Aff^1 @>\Td>> \Aff^1 \\
\end{CD}
\end{equation*}

Taking as a definition the fact that $T_d \in \Z[w]$ satisfies
\begin{equation}
T_d(z+z^{-1}) = z^d + z^{-d},
\label{eqn:chebyshev def}
\end{equation}
one may prove existence and uniqueness of the Chebyshev polynomials along with a recursive formula:
\begin{equation}
T_d(w) = 
\begin{cases}
2 & d = 0,\\
w & d = 1,\\
w T_{d-1}(w) - T_{d-2}(w) & d \geq 2.
\end{cases}\label{eqn: Chebyshev}
\end{equation}
The following rule for composition of Chebyshev polynomials follows directly from the definition in~\eqref{eqn:chebyshev def}:
\[
T_d \circ T_e(w) = T_{de}(w) = T_e \circ T_d(w),
\] 
and it gives a simple form of iteration
\begin{equation}\label{eqn: cheby iter}
T_d^n(w) = T_{d^n}(w).
\end{equation}

For $d\geq 2$, we have two cases for $T_d:\C \rightarrow \C$: If $\alpha \in [-2, 2]$, then $T_d(\alpha) \in [-2, 2]$. If $\alpha \in \C\setminus [-2,2]$, then $|T_d^n(\alpha)| = |T_{d^n}(\alpha)| \to \infty$ with $n \to \infty$. Thus we understand the Julia set in this case as well: $\K_{T_d} = \J_{T_d} = [-2,2]$.

See~\cite[Chapter 6]{Silverman-arithmetic-dynamical} for more on the dynamics of pure power maps, Chebyshev polynomials, and other  maps arising from algebraic groups, including proofs of some of the statements above.

\subsection{Dynamical Mahler measure and statement of results}
An important tool in the study of arithmetic dynamics is the canonical height of a point in $ \PP^1$ relative to a morphism $f$, defined by Call and Silverman \cite{Call-Silverman} and modeled after Tate's construction of the canonical
(N\'eron-Tate) height on abelian varieties.
The standard (Weil) height of a point $\beta = \frac a b \in \Q$ (written in lowest terms) can be computed as $h(\beta) = \max\{ |a|, |b| \}$, and, observing that $a$ and $b$ are (up to sign) the coefficients of the minimal polynomial of $\alpha$ over $\Z$, this definition extends easily to algebraic numbers $\alpha \in \bar \Q$. 
If $f \in \Q[z]$ is a polynomial and $\alpha \in \PP^1_{\bar \Q}$, we then define:
\[
\hat h_f(\alpha) = \lim_{n \to \infty} d^{-n} h(f^n(\alpha)),
\]
where $f^n$ represents the $n$-fold composition of $f$ and $d=\deg(f)$. In analogy with the canonical height on abelian varieties, we find that this limit exists and that
\begin{itemize}
\item
$\hat h_f(f(\alpha)) = (\deg f) \hat h_f(\alpha)$, and
\item
$\hat  h_f(\alpha) = 0$ if and only if $\alpha$ is a preperiodic point for $f$ (that is, $f^m(\alpha) = f^n(\alpha) $ for some natural numbers $m>n$).
\end{itemize}

Let $\alpha \in \bar \Q$ with minimal polynomial $P \in \Z[z]$. The theory of heights and Mahler measure are connected via the formula
\begin{equation}\label{eqn:stdmahler}
[\Q(\alpha): \Q]  h(\alpha) =  \m(P).
\end{equation}

Let $\K\subset \C$ be compact. For any Borel probability measure $\nu$ on $\K$ the energy of $\nu$ is given by~\cite[Definition 3.2.1]{Ransford}:
\[I(\nu)=\int_\K \int_\K \log |z-w| d\nu(z) d\nu(w).\] 
An equilibrium measure on $\K$ is a measure $\mu$ satisfying 
\[I(\mu)=\sup_\nu I(\nu),\]
where the supremum is taken over all Borel probability measures. An equilibrium measure always exists and is often unique \cite[Theorem 3.2.2]{Ransford}. 

Let 
\[f(z)=a_dz^d+\cdots +a_0 \in \C[z],\]
where $a_d \not = 0$ and $d \geq 2$. 

 Brolin \cite{Brolin},  Lyubich~\cite{Lyubich}, and Freire, Lopes, and Ma\~ne~\cite{FLM} construct a unique equilibrium measure $\mu_f$, supported on the Julia set $\J_f$, which is invariant under $f$. That is, $f_*\mu_f=\mu_f$, where the push-forward $f_*\mu$ is defined by $f_*\mu(B)=\mu(f^{-1}(B))$ for any Borel set $B$. We also have
\begin{equation}\label{eq:energycoeff}
I(\mu_f)=-\frac{1}{d-1}\log|a_d|,
\end{equation}
and in particular, $I(\mu_f)=0$ when $f$ is monic.

Suppose that $f \in \Z[z]$ is monic, and that $\beta = \frac a b \in \Q$ (written in lowest terms).  Then one can show that 
\[
\hat h_f(\beta) = \int_{\J_f}\log |bz - a| d\mu_f(z).
\]
More generally, if $\alpha$ is an algebraic integer with monic minimal polynomial $P(z)$, then
\begin{equation}\label{eq:adelic}
[\Q(\alpha): \Q] \hat h_f(\alpha) = \int_{\J_f} \log |P(z)| d\mu_f(z) =: \m_f(P).
\end{equation}
Notice that we are associating  the canonical height for the rational map $f$ to a generalized Mahler measure $\m_f$ for  polynomials $P \in \Z[x]$. Taking $f(z) = z^2$ recovers the familiar Mahler measure from equation~\eqref{eqn:stdmahler}.

Pi\~neiro, Szpiro, and Tucker \cite{PST} have shown that, like heights, this generalized Mahler measure may be computed 
via an adelic formula:
\begin{equation}\label{eq:Kadelic}
[K(\alpha):\Q]\hat h_f(\alpha) = \sum_{\text{places $v$ of }K}
\int_{\J_{f,v}}\log |P(z)|_v d\mu_{f,v}(z),
\end{equation}
where $K$ is a number field, $\alpha \in \PP^1(K)$, $P$ is a minimal polynomial for $\alpha$ over $K$, and 
$\J_{f,v}$ denotes the Julia set of $f: \PP^1(\C_v) \to \PP^1(\C_v)$. 
For finite $v$, $\int_{\J_{f,v}}\log |P(z)|_v d\mu_{f,v}(z)=0$ unless $f$ has bad reduction at $v$ or all the coefficients of $P$ have nonzero $v$-adic valuation. Thus, equation \eqref{eq:adelic} follows from \eqref{eq:Kadelic} by specializing to $K=\Q$ and $\alpha$ algebraic integer.

The contributions at the finite places in equation \eqref{eq:Kadelic} were shown to have the appropriate integral forms by Favre and Rivera-Letelier \cite[Proposition 1.3]{Favre-Rivera-Letelier}. Thus, equation \eqref{eq:adelic} can also be extended to the case where $f$ is not necessarily monic if the integral is replaced by a sum of integrals at different places. 

Szpiro and Tucker \cite{Szpiro-Tucker} used diophantine approximation to
show that this generalized Mahler measure of a polynomial $P$ at a place $v$
 can be computed by averaging $\log |P|_v$ over the periodic points of $f$.

It is not easy to give precise examples of $\m_f(P)$.  In the classical case when $f(z)=z^d$ with $d\geq 2$, then $\m_f(P) = \m(P)$ for $P \in \C(x)^\times$.  For $T_d$ the $d^\text{th}$ Chebyshev polynomial with $d\geq 2$, normalized so that $T_d(z+z^{-1}) = z^d + z^{-d}$, one can show that  $\m_{T_d}(P) = \m\left(P \circ (z + z^{-1})\right)$. Because of the underlying group structure discussed above, these examples are straightforward to compute (see~\cite{CLMM}). For general polynomials, exact computation is much more difficult, and in fact these two families are the exceptions to many theorems stated in the sequel.

One can compute approximations to dynamical Mahler measure in some cases. For example,   Ingram \cite{Ingram} gave an asymptotic formula for $\m_f(x)$ for $f(z)=z^2+c$ as $c \rightarrow \infty$
  \begin{align*}
  \m_f (x) =& \frac{\log |c|}{2}+\frac{1}{8}\log\left(1+\frac{2}{c}+\frac{1}{c^2}+\frac{1}{c^3} \right)+O\left(\frac{1}{c^7}\right)\\
  =&\frac{\log |c|}{2}+\frac{1}{4c}-\frac{1}{8c^2}+\frac{5}{24c^3}- \frac{5}{16c^4}+\frac{17}{40c^5} - \frac{29}{48c^6}+O\left(\frac{1}{c^7}\right).
  \end{align*}
  Similarly for $f(z)=(z-c)^2$, one can find
 \[\m_f(x)=\log|c|-\frac{1}{2c} -\frac{1}{4c^2} - \frac{5}{12c^3} - \frac{5}{8c^4}- \frac{17}{20c^5} - \frac{29}{24c^6}+O\left(\frac{1}{c^7}\right).\]

It is natural to ask about  higher-dimensional (multivariate) forms of this generalized Mahler measure. 

\begin{defn} Let $f\in \Z[z]$ monic of degree $d \ge 2$. The $f$-dynamical Mahler measure of $P \in \C(x_1, \dots x_n)^\times $ is given by 
\begin{equation}\label{eq:dmm-defi}
\m_f(P) = \int_{\J_f} \dots \int_{\J_f} \log | P(z_1, \cdots, z_n)| d\mu_f(z_1) \cdots d\mu_f(z_n).\end{equation}
\end{defn}
We will later see that the above integral always converges and that in fact, $\m_f(P)\geq 0$ if $P \in \Z[x_1, \dots, x_n]$ is nonzero.

It is an interesting to ask whether multivariable dynamical Mahler measure can also be interpreted as a height.  We do not answer the question in this paper, but we suspect that the answer is yes.  Zhang \cite{Zhang}, in the more general setting of a variety $X$ with a map $\Phi: X \to X$ and a line bundle $\mathcal{L}$ such that $\Phi^*\mathcal{L} = \mathcal{L}^d$, defined a canonical height $h_{\Phi, \mathcal{L}}(Y)$ of an arbitrary subvariety $Y \subset X$.  Chambert-Loir and Thuillier \cite{Chambert-Loir-Thuillier} showed that in the case where $X = \PP^n$, $\Phi([x_0, \dotsc x_n]) = [x_0^2, \dotsc, x_n^2] $ and $\mathcal{L} = \mathcal{O}(1)$, that, for a homogeneous polynomial $P \in \Z[x_0, \dotsc, x_n]$ and $Y$ the hypersurface $\{P(x_0, x_1, \dotsc, x_n) = 0\}$, $h_{\Phi, \mathcal{L}}(Y) = \m(P)$ is the usual multivariable Mahler measure of $P$, which is also equal to the multivariable Mahler measure of the inhomogeneous polynomial $P(1, x_1, \dotsc, x_n)$.

We conjecture that the multivariable dynamical Mahler measure can be recovered in a similar way, using instead the map $\Phi = f \times \cdots \times f: (\PP^1)^n \to (\PP^1)^n$, which acts as $f$ on each of the $n$ components.  

In this note, we prove the following results.

\begin{lem}[Dynamical Kronecker's Lemma] \label{lem:dynamicalKronecker} Let $f \in \Z[z]$ be monic 
of degree $d \ge 2$  and let $P \in \Z[x]$. 
If $\m_f(P)=0$,  then $P(x) = \pm \prod_i (x-\alpha_i)$ where each $\alpha_i$ is a preperiodic point of $f$.
\end{lem}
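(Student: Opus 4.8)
The plan is to split $\m_f(P)$ into a leading-coefficient term plus a sum of local contributions of the roots of $P$ at the archimedean place, use non-negativity of each piece to force $P$ to be monic up to sign and each root to lie in the filled Julia set $\K_f$, and then upgrade ``bounded complex orbit'' to ``preperiodic'' by a Northcott-type argument that exploits the integrality of the coefficients.

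Write $P(x)=a\prod_i(x-\alpha_i)$ with $a\in\Z\setminus\{0\}$ and $\alpha_i\in\bar\Q$, so that
\[
\m_f(P)=\log|a|+\sum_i G_f(\alpha_i),\qquad G_f(\alpha):=\int_{\J_f}\log|z-\alpha|\,d\mu_f(z).
\]
The first input is potential-theoretic: since $f$ is monic, $I(\mu_f)=0$ by~\eqref{eq:energycoeff}, and $G_f$ coincides with the dynamical Green's function $\alpha\mapsto\lim_{n\to\infty}d^{-n}\log^+|f^n(\alpha)|$ of $\K_f$ with pole at infinity (Brolin~\cite{Brolin}, Lyubich~\cite{Lyubich}, Freire--Lopes--Ma\~ne~\cite{FLM}; cf.~\cite{Ransford}). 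In particular $G_f\ge 0$ on $\C$, with $G_f(\alpha)=0$ exactly when the forward orbit $\mathcal{O}_f(\alpha)$ is bounded, i.e.\ $\alpha\in\K_f$. Plugging $\m_f(P)=0$ into the identity above, all terms being non-negative, forces $|a|=1$ and $G_f(\alpha_i)=0$ for every $i$. Hence $P=\pm\prod_i(x-\alpha_i)$ is monic up to sign, so each $\alpha_i$ is an algebraic integer, and each $\alpha_i$ lies in the compact set $\K_f$.

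It remains to show each $\alpha_i$ is preperiodic, and this is where the real work is: knowing $\alpha_i\in\K_f$ only controls the archimedean orbit, so one must use $P\in\Z[x]$. Because the root set $\{\alpha_i\}$ is stable under $\mathrm{Gal}(\bar\Q/\Q)$ and $f$ has integer coefficients, for $\sigma\in\mathrm{Gal}(\bar\Q/\Q)$ we have $\sigma\bigl(f^n(\alpha_i)\bigr)=f^n(\sigma\alpha_i)=f^n(\alpha_j)$ for some $j$, with $\alpha_j\in\K_f$; since $\K_f$ is compact and $f(\K_f)\subseteq\K_f$, every such conjugate has absolute value at most $R:=\max_{z\in\K_f}|z|$, uniformly in $n$. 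Moreover each $f^n(\alpha_i)$ is an algebraic integer of degree at most $\deg P$. Thus $\{f^n(\alpha_i):n\ge 0,\ i\}$ is a set of algebraic integers of bounded degree all of whose conjugates are bounded by $R$; by the standard Kronecker/Northcott finiteness (the coefficients of the minimal polynomials, being bounded integers, take only finitely many values), this set is finite, so each $\mathcal{O}_f(\alpha_i)$ is finite and each $\alpha_i$ is preperiodic.

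The main obstacle is the passage in the last paragraph: $\m_f(P)=0$ a priori says only that the roots of $P$ have bounded complex orbit, and it is precisely the integer-coefficient hypothesis — forcing $|a|=1$, hence the $\alpha_i$ algebraic integers with all conjugates again roots of $P$ — that converts this into a genuinely arithmetic (Northcott) finiteness statement. One should also check the mildly delicate point that $G_f$ vanishes \emph{exactly} on $\K_f$ rather than merely quasi-everywhere, which holds because polynomial filled Julia sets of degree $\ge 2$ have no isolated points and $\C\setminus\K_f$ is a regular domain.
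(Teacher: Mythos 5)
Your proof is correct and takes essentially the same route as the paper's: the first half is the paper's dynamical Jensen formula (Lemma 4.2) plus the non-negativity statement of Proposition 3.2 and Lemma 5.3, giving $|a|=1$ and each $\alpha_i\in\K_f$, and the second half is the same integrality-plus-compactness finiteness, merely repackaged. Where you invoke Northcott directly on the algebraic integers $f^n(\alpha_i)$ (bounded degree, conjugates bounded by $R$), the paper instead forms $P_n(x)=\prod_i\bigl(x-f^n(\alpha_i)\bigr)\in\Z[x]$, notes the coefficients are uniformly bounded so the $P_n$ repeat, and then extracts a permutation $\sigma$ and takes its order to conclude preperiodicity -- the same pigeonhole, just applied one level up.
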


\begin{prop}[Weak Dynamical Boyd--Lawton] \label{prop:weakdynamicalBL}
	Let $f \in \Z[z]$ monic 
of degree $d \ge 2$  and let $P \in \C[x , y]$. Then
	\[
	\limsup_{n\to\infty} \m_f(P(x, f^n(x))) \le \m_f(P(x, y)).
	\]
\end{prop}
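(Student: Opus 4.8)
\emph{Strategy.} I would reformulate both sides of the inequality as integrals of the single function $\log|P|$ over the compact set $\J_f\times\J_f$, against two different families of measures, and then compare those measures. Unwinding the definition of the one-variable dynamical Mahler measure gives
\[
\m_f\bigl(P(x,f^n(x))\bigr)=\int_{\J_f}\log\bigl|P(z,f^n(z))\bigr|\,d\mu_f(z)=\int_{\J_f\times\J_f}\log|P(z,w)|\,d\nu_n(z,w),
\]
where $\nu_n:=(\Phi_n)_*\mu_f$ is the pushforward of $\mu_f$ under $\Phi_n\colon\J_f\to\J_f\times\J_f$, $\Phi_n(z)=(z,f^n(z))$ (this lands in $\J_f\times\J_f$ because $\J_f$ is forward-invariant under $f$). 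On the other hand, by Fubini, $\m_f(P(x,y))=\int_{\J_f\times\J_f}\log|P(z,w)|\,d(\mu_f\times\mu_f)(z,w)$. So the proposition reduces to two facts: (i) $\nu_n$ converges weakly to $\mu_f\times\mu_f$; and (ii) a semicontinuity principle upgrading this to a $\limsup$ inequality for the discontinuous integrand $\log|P|$.

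For (i): since every $\nu_n$ and $\mu_f\times\mu_f$ is a Borel probability measure on the fixed compact metric space $\J_f\times\J_f$, it is enough, by Stone--Weierstrass and an $\varepsilon/3$ argument, to check $\int g\,d\nu_n\to\int g\,d(\mu_f\times\mu_f)$ for $g$ in the uniformly dense subalgebra spanned by the products $g(z,w)=a(z)b(w)$, $a,b\in C(\J_f)$. For such $g$,
\[
\int_{\J_f\times\J_f}g\,d\nu_n=\int_{\J_f}a(z)\,b\bigl(f^n(z)\bigr)\,d\mu_f(z),
\]
and convergence of this to $\bigl(\int a\,d\mu_f\bigr)\bigl(\int b\,d\mu_f\bigr)$ is precisely the assertion that the measure-preserving system $(\J_f,\mu_f,f)$ is mixing. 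This is a classical property of the equilibrium measure $\mu_f$ \cite{Brolin,Lyubich,FLM} (in fact $\mu_f$ is exact); alternatively it can be extracted directly from the uniform equidistribution of preimages $\frac{1}{d^n}\sum_{f^n(y)=x}h(y)\to\int h\,d\mu_f$ for $h\in C(\J_f)$ together with the balanced identity $\frac1d f^*\mu_f=\mu_f$.

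For (ii): $|P|$ is bounded on the compact set $\J_f\times\J_f$, so $\log|P|$ is bounded above there, and $\log|P|\colon\J_f\times\J_f\to[-\infty,\infty)$ is upper semicontinuous. The Portmanteau theorem in the form ``if $\nu_n$ converges weakly to $\nu$ and $g$ is upper semicontinuous and bounded above, then $\limsup_n\int g\,d\nu_n\le\int g\,d\nu$'' — obtained from the bounded case by truncating $g$ from below and letting the truncation level tend to infinity by monotone convergence — yields
\[
\limsup_{n\to\infty}\m_f\bigl(P(x,f^n(x))\bigr)=\limsup_{n\to\infty}\int_{\J_f\times\J_f}\log|P|\,d\nu_n\le\int_{\J_f\times\J_f}\log|P|\,d(\mu_f\times\mu_f)=\m_f(P(x,y)),
\]
which is the claim. (If $P\equiv 0$ the statement is vacuous; and since $\deg_x f^n\to\infty$, the polynomial $P(x,f^n(x))$ can vanish identically for at most finitely many $n$, so those terms — each equal to $-\infty$ — do not affect the $\limsup$.)

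\emph{Main obstacle.} The well-definedness of $\Phi_n$ and the convergence of the integrals involved are routine (the latter is the convergence of $\m_f$ on nonzero polynomials promised earlier in the paper). The real content is the mixing input in step (i); step (ii) is soft, but it is also exactly where equality is lost, since upgrading $\limsup\le$ to the full Boyd--Lawton equality would require quantitative control on how much $\nu_n$-mass concentrates near the zero locus of $P$, which weak convergence alone does not provide. I therefore expect (i) — correctly invoking or proving the mixing of $\mu_f$ — to be the crux, with the rest being bookkeeping.
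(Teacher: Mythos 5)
Your proposal is correct and takes essentially the same approach as the paper: reduce the claim to the weak convergence of $(\Phi_n)_*\mu_f$ to $\mu_f\times\mu_f$, where $\Phi_n(z)=(z,f^n(z))$ (this is exactly what the paper packages as Lemma~\ref{lem:Weierstrass}), and then conclude by an upper-semicontinuity step — the paper carries this out concretely by replacing $\log|P|$ with the continuous majorant $\log(|P|+\delta)$ and letting $\delta\to 0$ by monotone convergence, which is precisely the Portmanteau inequality you invoke. The only minor divergence is how the weak convergence is established: you lead with mixing of $(\J_f,\mu_f,f)$, while the paper derives it directly from the balanced pullback identity $d^{-1}f^*\mu_f=\mu_f$ together with uniform equidistribution of $f^n$-preimages (citing \cite[Theorem~6.5.8]{Ransford}), which is exactly the ``alternative'' route you mention in passing.
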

The above statement is expected to be true with an equality. This would correspond to a dynamical version of the Boyd--Lawton limit \eqref{eq:BL}. 

For the main result, we will assume the following conjecture. 
\begin{conj}[Dynamical Lehmer's conjecture] \cite[Conjecture~3.25]{Silverman-arithmetic-dynamical} \label{conj:dynamicalLehmer}
 There is some $\delta = \delta_f > 0$ such that any  single variable polynomial $P\in \Z[x]$ with $\m_f(P) > 0$ satisfies $\m_f(P) > \delta$.
\end{conj}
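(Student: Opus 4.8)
The plan would be to attack this exactly as one attacks the classical Lehmer problem, but I should say at the outset that a complete proof is not to be expected: taking $f(z)=z^2$ turns $\m_f$ into the ordinary Mahler measure via \eqref{eqn:stdmahler}, so Conjecture~\ref{conj:dynamicalLehmer} for that single map \emph{is} Lehmer's question. Accordingly the realistic targets are (i) reducing the statement to a lower bound for the canonical height $\hat h_f$, (ii) establishing a Dobrowolski-type inequality — a bound that is positive but decays with the degree — and (iii) isolating the archimedean place as the one point where the uniform bound is genuinely open.

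The first step is a reduction to irreducible polynomials. The functional $\m_f$ is additive over factorizations by \eqref{eq:dmm-defi}, it is nonnegative on nonzero elements of $\Z[x]$, and by Lemma~\ref{lem:dynamicalKronecker} an irreducible factor contributes $0$ only when its roots are preperiodic; moreover, if $P$ is not monic then its leading coefficient alone forces $\m_f(P)\ge\log 2$ (the potential $\int_{\J_f}\log|z-w|\,d\mu_f(z)$ is $\ge 0$ for monic $f$). Hence it suffices to produce $\delta_f>0$ valid for every monic irreducible $P\in\Z[x]$ whose roots are non-preperiodic, and for such $P$ equation \eqref{eq:adelic} rewrites the claim as
\[
[\Q(\alpha):\Q]\,\hat h_f(\alpha) > \delta_f \qquad\text{for every non-preperiodic algebraic integer }\alpha .
\]

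The second step is to run the dynamical Dobrowolski argument through the adelic formula \eqref{eq:Kadelic}. For all but finitely many primes $p$ the map $f$ has good reduction, and over $\overline{\F}_p$ \emph{every} point is preperiodic for the reduced map, so for each prime $\mathfrak p$ of $\Q(\alpha)$ above such a $p$ there are integers $m>n$, bounded polynomially in the residue degree, with $\mathfrak p\mid f^m(\alpha)-f^n(\alpha)$. If $\alpha$ is not preperiodic then $\gamma:=f^m(\alpha)-f^n(\alpha)$ is a nonzero algebraic integer, so $\log\bigl|N_{\Q(\alpha)/\Q}(\gamma)\bigr|$ is the logarithm of a positive integer; the non-archimedean places force it to be divisible by many small primes, while the archimedean places bound it above by a constant multiple of $[\Q(\alpha):\Q]\,\hat h_f(\alpha)$ (using $\hat h_f(f^k\alpha)=d^k\hat h_f(\alpha)$ together with the comparison between $\hat h_f$ and the Weil height). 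Counting the primes of good reduction up to a threshold and optimizing it gives, as in Dobrowolski and Cantor--Straus, a bound of the shape
\[
\m_f(P)\;\gg_f\;\frac{1}{[\Q(\alpha):\Q]}\left(\frac{\log\log[\Q(\alpha):\Q]}{\log[\Q(\alpha):\Q]}\right)^{3}.
\]
This is a statement one can prove in full, but it tends to $0$ with the degree; even importing the strongest classical input (Dimitrov's theorem) would only improve it to $\m_f(P)\gg_f 1/[\Q(\alpha):\Q]$, still short of a fixed $\delta_f$.

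The main obstacle is the one that is open already classically: bounding the archimedean term $\int_{\J_f}\log|P(z)|\,d\mu_f(z)$ below by an absolute constant. The finite places never contribute more than a ``$\log$ of a positive integer'' worth of mass, spread over many primes, and nothing forces a single non-archimedean place to carry a bounded-below amount of energy; equivalently, the methods above cannot exclude a sequence of non-preperiodic algebraic integers of growing degree with $\hat h_f(\alpha)\asymp 1/[\Q(\alpha):\Q]$. Absent a genuinely new idea here, the most I would actually include is the Dobrowolski-type bound together with a Smyth-type dichotomy — a uniform lower bound for a suitable class of ``non-reciprocal'' polynomials relative to $f$, reducing Conjecture~\ref{conj:dynamicalLehmer} to its reciprocal case — and then, as the paper does, record the conjecture as a hypothesis for the two-variable results rather than as a theorem.
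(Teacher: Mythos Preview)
The statement is labeled a \emph{conjecture} in the paper, and the paper makes no attempt to prove it; it is simply stated and then assumed as a hypothesis in Theorem~\ref{thm:mainresult}. You correctly identify this at the outset---your observation that $f(z)=z^2$ recovers the classical Lehmer problem shows that a proof of Conjecture~\ref{conj:dynamicalLehmer} would settle an open question, so no complete argument is possible with current techniques---and your closing sentence (``record the conjecture as a hypothesis for the two-variable results rather than as a theorem'') is exactly what the paper does.

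Your intermediate discussion of a Dobrowolski-type bound and a Smyth-type dichotomy goes well beyond anything in the paper: the paper contains no partial results toward Conjecture~\ref{conj:dynamicalLehmer}, no height lower bounds, and no adelic counting argument. That material is plausible as a research programme, but since it does not and cannot close the gap to a fixed $\delta_f$, it is not a proof proposal in the usual sense---it is a survey of what is known classically, transported to the dynamical setting. For the purposes of this paper there is nothing to prove here, and your conclusion agrees with the paper's treatment.
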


 \begin{thm} \label{thm:mainresult}
 	Assume the Dynamical Lehmer's conjecture.
 	
 	Let $f \in \Z[z]$ be a monic polynomial of degree $d \ge 2$ which is not conjugate to $z^d$ or to $\pm T_d(z)$, where $T_d(z)$ is the $d$-th Chebyshev polynomial.
 	Then any polynomial $P \in \Z[x, y]$ which is irreducible in $\Z[x,y]$  (but not necessarily irreducible in $\C[x, y]$)
 	with $\m_f(P) = 0$, and which contains both variables $x$ and $y$,
 	divides a product of complex polynomials of  the following form:
 	 \[	\tilde{f}^n(x)  - L(\tilde{f}^m(y)),\]
  where $m, n \ge 0$ are integers, $L \in \C[z]$ is a linear polynomial commuting with an iterate of $f$, and $\tilde{f} \in \C[z]$ is a nonlinear polynomial of minimal degree commuting with an iterate of $f$ (with possibly different choices of $L$, $\tilde{f}$, $n$, and $m$ for each factor).

As a partial converse, suppose there exists a product of complex polynomials $F_j$ such that
		\begin{enumerate}
		\item each $F_j$ has the form $\tilde{f}^{n}(x) - L(\tilde{f}^{m}(y))$, where $L$ and $\tilde{f}$ are as above (with possibly different choices of $L$, $\tilde{f}$, $n$, and $m$ for each $j$);
		\item $\prod F_j \in \Z[x, y]$; and
		\item $P$ divides $\prod F_j$ in $\Z[x, y]$.
		\end{enumerate}
	Then $\m_f(P) = 0$.
  
\end{thm}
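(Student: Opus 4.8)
The plan is to treat the two implications separately. For the converse I would observe that $P\mid G$ forces every $\C$-irreducible component of $C:=\{P=0\}$ to be a component of some curve $\{\tilde f^n(x)=L(\tilde f^m(y))\}$, and that each such curve is invariant under $f\times f$ on $\Aff^2$: since $L$, $\tilde f$ and $f$ all commute with a common iterate of $f$ and, for $f$ disintegrated (i.e.\ not conjugate to $z^d$ or $\pm T_d$), Ritt's theorem makes the monoid of polynomials commuting with an iterate of $f$ commutative, applying $f\times f$ to $\tilde f^n(x)=L(\tilde f^m(y))$ and moving the outer $f$ through $\tilde f^n$ and $L$ reproduces the same equation. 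Hence $C$ is $(f\times f)$-preperiodic, so $\hat h_{f\times f}(C)=0$; and because $f\in\Z[z]$ is monic, $f$ has good reduction at every finite place, so in the adelic expression for $\hat h_{f\times f}(C)$ (the analogue for effective divisors on $\Aff^2$ of the formula \eqref{eq:Kadelic}) all finite places contribute $0$ for the primitive integer polynomial $P$, leaving $\m_f(P)=\hat h_{f\times f}(C)=0$. One can instead verify $\m_f(\tilde f^n(x)-L(\tilde f^m(y)))=0$ directly by a Green's-function computation: for fixed $y=w\in\J_f$ the inner integral is $\int_{\J_f}\log|\tilde f^n(z)-c|\,d\mu_f(z)$ with $c=L(\tilde f^m(w))\in\J_f\subseteq\K_f$, and, since $\tilde f$ shares its Green's function with $f$ and $\mathrm{lc}(\tilde f^n)$ is a root of unity — $\tilde f\circ f^k=f^k\circ\tilde f$ forces $\mathrm{lc}(\tilde f)^{d^k-1}=1$ — this evaluates to $G_f(c)=0$.

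For the forward implication, take $P\in\Z[x,y]$ irreducible in $\Z[x,y]$, involving both variables, with $\m_f(P)=0$, and set $P_n(x):=P(x,f^n(x))\in\Z[x]$. If $P_n\equiv0$ then $y-f^n(x)\mid P$, so $P=\pm(y-f^n(x))$, whose zero set is $(f\times f)$-invariant and hence of the required form by the classification invoked below; this happens for at most one $n$. Otherwise, by the Weak Dynamical Boyd--Lawton Proposition, $\limsup_n\m_f(P_n)\le\m_f(P)=0$, while $\m_f(P_n)\ge0$, so $\m_f(P_n)\to0$, and by the Dynamical Lehmer Conjecture $\m_f(P_n)=0$ for all large $n$. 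Then the Dynamical Kronecker Lemma gives $P_n(x)=\pm\prod_i(x-\alpha_{i,n})$ with every $\alpha_{i,n}$ a preperiodic point of $f$, so each $(\alpha_{i,n},f^n(\alpha_{i,n}))$ lies on $C:=\{P=0\}$ and has both coordinates $f$-preperiodic.

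Next I would show that these points, over all large $n$, are infinite in number; since an infinite subset of an irreducible curve is Zariski-dense, this makes the set of points of $C$ with both coordinates $f$-preperiodic Zariski-dense in some $\C$-irreducible component $C_0$ of $C$. For the infinitude: if this set were finite, the roots of all $P_n$ (for $n$ large) would lie in a fixed finite set, forcing — as $\deg P_n\to\infty$ — a fixed value $a_*$ to be a root of $P_n$ at which the local intersection multiplicity of $C$ with $\{y=f^n(x)\}$ grows without bound along a subsequence; analyzing the jets of $f^n$ at the relevant periodic point of $f$ and using that $P\ne y-\mathrm{const}$ (so no branch of $C$ at that point is horizontal) rules this out. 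Finally, since $P$ is irreducible over $\Q$ its $\C$-irreducible components form a single $\mathrm{Gal}(\bar\Q/\Q)$-orbit and $f\times f$ is defined over $\Q$, so it suffices to put $C_0$ — and then, by conjugation, every component of $C$ — into the desired form.

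The remaining step is the main obstacle: classifying the plane curves carrying a Zariski-dense set of $(f\times f)$-preperiodic points when $f$ is disintegrated — precisely the situation in which the exclusion of $f$ conjugate to $z^d$ or $\pm T_d$ is essential, since for those maps the algebraic-group structure produces extra such curves. I would invoke the known case of the dynamical Manin--Mumford conjecture for split polynomial maps with disintegrated factors to conclude that $C_0$ is $(f\times f)$-preperiodic, and then the Medvedev--Scanlon classification of $(f\times f)$-periodic subvarieties of $(\Aff^1,f)^2$ (whose periodic curves are the graphs $y=\sigma(x)$ and $x=\sigma(y)$ with $\sigma$ in the commutative monoid of polynomials commuting with an iterate of $f$); pulling $C_0$ back along an iterate of $f\times f$ to such a periodic curve and rewriting $\sigma$ through the monoid generator $\tilde f$ and the finite group of linear symmetries puts $C_0$ in the form $\tilde f^n(x)=L(\tilde f^m(y))$, whence, via the Galois reduction above, $P$ divides a product of polynomials of this form. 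The secondary technical points are the vanishing of the finite-place terms in the converse (good reduction of $f$) and the jet argument for infinitude of the preperiodic points in the forward direction.
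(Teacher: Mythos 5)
Your forward direction follows the paper's outline almost exactly: Weak Dynamical Boyd--Lawton (Proposition~\ref{prop:weakdynamicalBL}) plus Dynamical Lehmer yields $\m_f(P_n)=0$ for $n\gg 0$, Dynamical Kronecker (Lemma~\ref{lem:dynamicalKronecker}) makes the roots preperiodic, Galois conjugacy of the $\C$-irreducible factors transfers infinitude of preperiodic points to each factor, and one then invokes the classification of curves with infinitely many $(f\times f)$-preperiodic points. (Your two-step ``dynamical Manin--Mumford for split disintegrated maps, then Medvedev--Scanlon'' is exactly how \cite[Theorem 1.5]{Ghioca-Nguyen-Ye} is proved; the paper simply cites that theorem as Proposition~\ref{prop:GNY}, so this is the same ingredient.) The genuine gap is in your infinitude step, which the paper isolates as the \emph{bounded orders property} and treats at length in Proposition~\ref{prop:preperiodicBOP}. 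Your one-sentence jet sketch at ``the relevant periodic point'' hides two real obstacles. First, the accumulation point $a_*$ is in general only preperiodic, not periodic, and one needs the pushforward/composition reductions of Lemma~\ref{lem:trickier} to transport the bound along the orbit to a genuine fixed point. Second, once at a fixed point $\alpha$, the multiplier $f'(\alpha)$ may be a root of unity of order $>1$, and then the jets of $f^n$ at $\alpha$ do \emph{not} separate the iterates; the paper must pass to an iterate $f^k$ so the multiplier becomes $1$ before Lemma~\ref{sublemma} gives a coefficient of $f^n$ that takes each value at most once. Your observation that no branch of $C$ is horizontal (i.e.\ no $g_j$ is constant) is needed but not sufficient; without the multiplier analysis the orders $\ord_0(f^n(z)-g_j(z))$ are not visibly bounded.

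For the converse you take a genuinely different and heavier route than the paper. The paper's argument is two lines: by Lemma~\ref{lem:invariance}, any polynomial $g$ commuting with (an iterate of) $f$ preserves $\mu_f$, so applying this in each variable turns $\int\!\!\int\log|\tilde f^n(z_1)-L(\tilde f^m(z_2))|\,d\mu_f\,d\mu_f$ into the energy $I(\mu_f)$, which vanishes since $f$ is monic. Your first alternative (preperiodicity of the curve under $f\times f$, then an adelic formula for canonical heights of effective divisors on $\Aff^2$, using good reduction of $f$ at all finite places) is plausible but brings in machinery that the paper does not develop, while your second alternative (Jensen plus the pullback identity $\sum_{\tilde f^n(\alpha)=c}g_f(\alpha)=g_f(c)$ plus the observation that $\mathrm{lc}(\tilde f)$ is a root of unity) is correct but is, under the hood, the same change-of-variables idea carried out one integral at a time with extra bookkeeping. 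The measure-invariance argument is the streamlined version of both, and is the one that extends most easily to the multivariable setting; it is worth recognizing it as the minimal input here.
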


Note that the second half of Theorem~\ref{thm:mainresult} is only a partial converse.  We can guarantee that $\prod F_j \in \Q[x, y]$ by enlarging the set $\{F_j\}$ to contain all the Galois conjugates of all its members.   However, as we do not know whether the polynomials $F_j$ have algebraic integer coefficients, we cannot guarantee $\prod F_j \in \Z[x, y]$.  We conjecture however that it is always the case that the $F_j$ have algebraic integer coefficients, in which case we could strengthen this converse.

We remark that Theorem~\ref{thm:mainresult} complements a statement for classical Mahler measure. Indeed, it is shown in \cite[Theorem~3.10]{EverestWard} that for any primitive polynomial $P\in \Z[x_1^\pm, \dots, x_n^\pm]$, $\m(P)$ is zero if and only if $P$ is a monomial times a product of cyclotomic polynomials evaluated on monomials. This case is fundamentally different from what is considered in Theorem \ref{thm:mainresult}, as the classical Mahler measure contains more structure given by the multiplicative nature of the function $f(z)=z^d$. 

It is also interesting to compare our results with \cite[Conjecture~2.5]{Zhang}, which says that a subvariety whose dynamical canonical height is $0$ must be preperiodic.  If Zhang's height agrees with the dynamical Mahler measure, as we expect to be the case, then Lemma~\ref{lem:dynamicalKronecker} and Theorem~\ref{thm:mainresult} can be seen as special cases of Zhang's Conjecture 2.5 for the dynamical maps $f: \PP^1 \to \PP^1$ and $f \times f: \PP^1 \times \PP^1 \to \PP^1 \times \PP^1$, respectively.  (Note that Zhang's conjecture is known not to hold in general: see \cite{Ghioca-Tucker-Zhang} for a counterexample.  However, it may be possible to use the arguments in \cite{Ghioca-Nguyen-Ye-two-var} to obtain a proof for the case of $f \times f: \PP^1 \times \PP^1 \to \PP^1 \times \PP^1$.)

This article is organized as follows.  Section \ref{sec:Ransford-is-our-friend} covers some background on complex dynamics. Section \ref{sec:well-defined} is devoted to showing that  the dynamical Mahler measure is well-defined in general, and non-negative for polynomials with integral coefficients, while Section \ref{sec:Kronecker} discusses the dynamical version of Kronecker's Lemma \ref{lem:dynamicalKronecker}. The Weak Dynamical Boyd--Lawton Proposition \ref{prop:weakdynamicalBL} is discussed and proven in Section \ref{sec:weakBL}. It serves as a preparation for the proof of our main result, Theorem \ref{thm:mainresult}, covered in Sections \ref{sec:zeromeasure} and \ref{sec:2-Kronecker}. Finally, in Section \ref{sec:Lf}, we give a precise description of the polynomials $L$ and $\tilde{f}$ that appear in the statement of this theorem.

\subsection*{Acknowledgements} We are grateful to Patrick Ingram for proposing that we study the dynamical Mahler measure of multivariate polynomials and for many early discussions, and to the anonymous referees for their careful reading of the article and several helpful suggestions. This project was initiated as part of the BIRS workshop ``Women in Numbers 5'', held virtually in 2020. We thank the workshop organizers,  Alina Bucur, Wei Ho, and Renate Scheidler for their leadership and encouragement that extended for the whole duration of this project. This work has been partially supported by 
the Natural Sciences and Engineering Research Council
of Canada (Discovery Grant 355412-2013 to ML), the Fonds de recherche du Qu\'ebec - Nature et technologies (Projets de recherche en \'equipe 256442 and 300951 to ML), the Simons Foundation (grant number 359721 to MM), and the National Science Foundation (grant DMS-1844206 supporting AC and grant  DMS-1902772 to LM).
 This material is based upon work supported by and while the third author served at the National Science Foundation. Any
opinion, findings, and conclusions or recommendations expressed in this material are those of the authors
and do not necessarily reflect the views of the National Science Foundation.

Data sharing not applicable to this article as no datasets were generated or analyzed during the current study.

\section{Some background on complex dynamics}\label{sec:Ransford-is-our-friend}

In this section we recall some results from complex dynamics that will be useful for the rest of this article. The main reference for this is the book of Ransford \cite{Ransford}. 

Let $\nu$ be a finite Borel measure on $\C$ with compact support $\K$. One can define the potential function  \cite[Definition 3.1.1]{Ransford}  \[p_\nu: \C \rightarrow[-\infty, \infty)\] by 
\begin{equation}\label{eq:potential}
p_\nu(z):=\int_{\K} \log|z-w|d\nu(w).
\end{equation}
The potential $p_\nu$ is subharmonic in $\C$ by \cite[Theorem 3.1.2]{Ransford}.

Thus we can write the energy as 
\[I(\nu):=\int_\K \int_\K \log|z-w|d \nu(z) d \nu(w)=\int_\K p_\nu(z) d\nu(z).\]
Then Frostman's Theorem 
\cite[Theorem 3.3.4]{Ransford} implies that if $\mu$ is the equilibrium measure of $\mathcal{K}$, then 
\begin{enumerate}
\item $p_\mu \geq I(\mu)$ on $\C$;
\item $p_\mu = I(\mu)$ on $\mathcal{K}\setminus \mathcal{E}$, where $\mathcal{E}\subseteq \partial{\mathcal{K}}$.
\end{enumerate}
By \eqref{eq:energycoeff}, $I(\mu_f)\not = -\infty$ and $p_{\mu_f}$ is a finite number.



Let  $\mathcal{D}$ a proper sub-domain of the Riemann sphere. A Green's function \cite[Definition 4.4.1]{Ransford} for $\mathcal{D}$ is a map $g_\mathcal{D}: \mathcal{D}\times \mathcal{D}\rightarrow (-\infty,\infty]$ such that for each $w \in \mathcal{D}$:
\begin{enumerate}
 \item $g_\mathcal{D}(\cdot, w)$ is harmonic on $\mathcal{D}\setminus \{w\}$, and bounded outside each neighbourhood of $w$;
 \item $g_\mathcal{D}(w,w)=\infty$, and as $z \rightarrow w$, 
 \[g_\mathcal{D}(z,w)=\begin{cases}
                       \log|z|+O(1) & w=\infty,\\
                       -\log|z-w|+O(1) & w \not = \infty;
                      \end{cases}\]
\item     $g_\mathcal{D}(z,w)\rightarrow 0$  as $z \rightarrow \zeta$ for $\zeta\in \partial \mathcal{D}$ outside of a Borel polar subset of $\partial \mathcal{D}$.                   
\end{enumerate}

A set $\mathcal{E}\subset \C$  is called polar if $I(\nu)=-\infty$ for every finite Borel measure $\nu \not= 0$ for which $\mathrm{supp}\, \nu$ is a compact subset of $\mathcal{E}$ \cite[Definition 3.2.2]{Ransford}.  Otherwise $\mathcal{E}$ is called non-polar. If $\mathcal{D}$ is a domain of the Riemann sphere such that $\partial \mathcal{D}$ is non-polar, then,  there is a unique Green's function $g_\mathcal{D}$ for $\mathcal{D}$ \cite[Theorem 4.4.2]{Ransford}. 

In particular, the proof of \cite[Theorem 4.4.2]{Ransford} gives us explicitly that, if $\partial \mathcal{D}$ is non-polar and if $\mu$ is the equilibrium measure on $\mathcal{D}$, then
\begin{equation}\label{green and potential}
g_\mathcal{D}(z, \infty) =  p_{\mu}(z) - I(\mu)
\end{equation}
for $z \in \mathcal{D} \setminus \{\infty\}$.

If $\K_f$ is the filled Julia set, we remark that \cite[Corollary 6.5.4]{Ransford} implies that the Green's function of its complement is given by  
\[g_{\mathbb{P}^1(\C)\setminus \K_f}(z,\infty)=\lim_{n\rightarrow \infty} \frac{1}{\deg(f)^n} \log^+|f^n(z)|,\]
where $\log^+|\alpha|=\log|\alpha|$ when $|\alpha|>1$ and 0 otherwise. 

 From now on, we will write $g_f(z,w)$ for the Green's function of the complement of the filled Julia set.
 
We can summarize the results of the discussion above in the following two propositions:

\begin{prop}
	If $f \in \C[z]$ monic of degree $d\geq 2$, then 
	\begin{equation}\label{eq:g=p}
		g_{f}(z, \infty) = p_{\mu_f}(z)
	\end{equation}
for $z \notin \K_f$.
\end{prop}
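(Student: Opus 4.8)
The plan is to simply combine the two facts already laid out in the preceding discussion. First I would recall from \eqref{eq:energycoeff} that since $f$ is monic, the leading coefficient is $a_d = 1$, so $I(\mu_f) = -\frac{1}{d-1}\log|a_d| = 0$. Thus $\mu_f$ has finite energy equal to zero; in particular $\K_f$ is non-polar, so the Green's function $g_{\K_f}(\cdot,\infty)$ exists and is unique.

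Next I would invoke \eqref{green and potential}, the identity extracted from the proof of \cite[Theorem 4.4.2]{Ransford}, applied to the domain $\mathcal{D} = \widehat{\C} \setminus \K_f$ (the complement of the filled Julia set in the Riemann sphere), whose boundary is $\J_f = \partial \K_f$, which is non-polar. That identity reads $g_{\mathcal{D}}(z,\infty) = p_{\mu_f}(z) - I(\mu_f)$ for $z \in \mathcal{D} \setminus \{\infty\}$. Here one must be a little careful about conventions: the Green's function $g_f(z,w) = g_{\K_f}(z,w)$ as used in this paper is the Green's function of the complement of $\K_f$, so $g_f(z,\infty)$ is exactly $g_{\mathcal{D}}(z,\infty)$ in the notation above. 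Substituting $I(\mu_f) = 0$ then gives $g_f(z,\infty) = p_{\mu_f}(z)$ for all $z \notin \K_f$.

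Finally I would need to extend the equality from $\C \setminus \K_f$ to all of $\C$. On the interior of $\K_f$, both sides vanish: $g_f(z,\infty) = 0$ there by property (3) of the Green's function together with \cite[Corollary 6.5.4]{Ransford} (the limit formula $\lim_n d^{-n}\log^+|f^n(z)|$ is $0$ precisely when $z \in \K_f$), and $p_{\mu_f}(z) \le I(\mu_f) = 0$ there while Frostman gives $p_{\mu_f} \ge I(\mu_f) = 0$ quasi-everywhere, forcing $p_{\mu_f} \equiv 0$ on $\K_f$ up to a polar set; subharmonicity of $p_{\mu_f}$ then upgrades this to equality everywhere on the interior. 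On the boundary $\J_f$ one uses continuity/subharmonicity of both sides. I expect this boundary-and-interior bookkeeping to be the only real obstacle; the substantive input is entirely contained in \eqref{eq:energycoeff}, \eqref{green and potential}, and the Ransford results already cited, so the proof is essentially a one-line chain of equalities once $I(\mu_f) = 0$ is noted.
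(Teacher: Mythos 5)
Your argument uses exactly the same two ingredients as the paper's proof: $I(\mu_f)=0$ from \eqref{eq:energycoeff} (equivalently, \cite[Theorem 6.5.1]{Ransford}), and the Green's-function/potential identity \eqref{green and potential}, which is applicable because $\J_f=\partial\K_f$ is non-polar. In that sense it is the same proof. The one place you go beyond the paper is the final paragraph, where you note that \eqref{green and potential} as stated holds only for $z\in\mathcal{D}\setminus\{\infty\}$, i.e.\ off $\K_f$, and try to extend the equality across $\K_f$. That concern is legitimate (the paper's two-line proof silently glosses over it, and the subsequent Proposition \ref{prop:nonnegpotential} does use the identity on $\K_f$), so your extra care is warranted rather than redundant. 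However your sketch of the extension is a bit off: Frostman (\cite[Theorem 3.3.4]{Ransford}) gives $p_{\mu_f}\ge I(\mu_f)$ everywhere and $p_{\mu_f}=I(\mu_f)$ on $\K_f$ outside a polar subset of $\partial\K_f$; the inequality $p_{\mu_f}\le I(\mu_f)$ on the interior is a consequence of that equality statement, not a separately available hypothesis, so the two sentences as written have the logic backwards. Also, ``continuity of both sides on $\J_f$'' is not automatic for general compacta (potentials are only upper semicontinuous); here it does hold because every point of $\J_f$ is regular for the Dirichlet problem, but that is a nontrivial fact worth citing (e.g.\ \cite[Theorem 6.5.1]{Ransford} again, which asserts regularity of $\J_f$). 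With those two points tightened, the extension step is correct and completes the proof cleanly.
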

\begin{proof}
	By \cite[Theorem~6.5.1]{Ransford}, the Julia set $\J_f = \partial \K_f$ is non-polar, and so equation \eqref{green and potential} applies. Since $f$ is monic, by \eqref{eq:energycoeff},  we have $I(\mu_f) = 0$.
\end{proof}
 
\begin{prop}\label{prop:nonnegpotential}
	If $f \in \C[z]$ is monic of degree $d\geq 2$, then $ p_{\mu_f}(z) \ge 0$ for all $z \in \C$, and the equality $ p_{\mu_f}(z) = 0$ holds exactly when $z \in \K_f$.
\end{prop}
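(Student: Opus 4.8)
The plan is to combine the identity $g_f(z,\infty) = p_{\mu_f}(z)$ from equation~\eqref{eq:g=p} with the defining properties of the Green's function and the dynamical description of $g_{\K_f}$. First I would recall that $\K_f$ is compact and its boundary $\J_f = \partial \K_f$ is non-polar (by \cite[Theorem 6.5.1]{Ransford}), so the Green's function $g_f(\cdot,\infty) = g_{\K_f}(\cdot,\infty)$ exists and is unique. Property (1) of a Green's function gives that $g_f(\cdot,\infty)$ is harmonic on the unbounded component of $\C\setminus\K_f$ minus $\{\infty\}$; property (3) gives $g_f(z,\infty)\to 0$ as $z$ approaches $\partial\K_f$ from outside; and property (2) gives $g_f(z,\infty) = \log|z| + O(1)$ near $\infty$, so in particular $g_f(z,\infty)\to+\infty$ as $z\to\infty$.

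Next I would establish nonnegativity. The cleanest route is via the dynamical formula $g_{\K_f}(z,\infty) = \lim_{n\to\infty} d^{-n}\log^+|f^n(z)|$ from \cite[Corollary 6.5.4]{Ransford}: each term $d^{-n}\log^+|f^n(z)|$ is manifestly $\geq 0$, so the limit is $\geq 0$, and hence $p_{\mu_f}(z) = g_f(z,\infty)\geq 0$ for all $z$. (Alternatively, one can argue that $p_{\mu_f}$ is subharmonic on $\C$, equals $0$ on the unbounded complementary component's boundary, tends to $+\infty$ at $\infty$, and is bounded below on the compact set $\K_f$ by Frostman's theorem $p_{\mu_f}\geq I(\mu_f) = 0$; then the minimum principle for subharmonic functions forces $p_{\mu_f}\geq 0$ everywhere. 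But the dynamical formula makes this immediate.)

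For the characterization of the equality set, I would argue both inclusions. If $z\in\K_f$, then $f^n(z)$ stays bounded, so $f^n(z)\not\to\infty$ and in fact $\log^+|f^n(z)|$ is bounded, hence $d^{-n}\log^+|f^n(z)|\to 0$, giving $p_{\mu_f}(z) = g_f(z,\infty) = 0$. Conversely, suppose $z\notin\K_f$; then $z$ lies in the unbounded component $\mathcal{D}$ of $\C\setminus\K_f$, on which $g_f(\cdot,\infty)$ is a nonconstant harmonic function (nonconstant because it blows up at $\infty$) that is $\geq 0$ and extends continuously by $0$ to $\partial\mathcal{D} \subseteq \J_f$. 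By the minimum principle for harmonic functions on the domain $\mathcal{D}\setminus\{\infty\}$ — or directly since $g_f(\cdot,\infty)$ is harmonic and positive near the boundary and blows up at $\infty$ — a harmonic function attaining an interior minimum value $0$ would be identically $0$, contradicting $g_f(z,\infty)\to\infty$. Hence $g_f(z,\infty) > 0$, i.e.\ $p_{\mu_f}(z) > 0$.

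The main obstacle is handling the boundary behaviour carefully: the equality $g_f(z,\infty)\to 0$ as $z\to\zeta\in\partial\mathcal{D}$ is only guaranteed at regular boundary points, and one must make sure that the minimum principle is applied on the correct domain (the unbounded component of the complement, with the point at infinity removed) and that "$g_f(z,\infty) = 0$ exactly on $\K_f$" does not accidentally fail at irregular boundary points or at points of $\J_f$ not on $\partial\mathcal{D}$ (bounded complementary components). Using the dynamical limit formula $g_{\K_f}(z,\infty) = \lim_n d^{-n}\log^+|f^n(z)|$ sidesteps most of this: it is valid on all of $\C$, it is visibly $0$ on $\K_f$, and on the complement one shows $|f^n(z)|\to\infty$ with geometric-type growth in the exponent so the limit is strictly positive; I would lean on this formula to avoid delicate potential-theoretic boundary arguments.
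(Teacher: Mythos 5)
Your proposal is correct and reaches the same conclusion, but handles the strict positivity on $\C\setminus\K_f$ by a different route than the paper. For nonnegativity, the paper invokes Frostman's inequality $p_{\mu_f}\ge I(\mu_f)=0$, while you lean on the dynamical formula $g_f(z,\infty)=\lim_n d^{-n}\log^+|f^n(z)|$, noting each term is $\ge 0$; both work, and you mention the Frostman route too. For $z\in\K_f$ the two proofs coincide. The real divergence is in showing $p_{\mu_f}(z)>0$ for $z\notin\K_f$: the paper simply cites \cite[Theorem~4.4.3]{Ransford} (positivity of the Green's function off a compact set with non-polar boundary), whereas you supply an argument via the minimum principle for harmonic functions, or alternatively via growth estimates on $|f^n(z)|$ in the basin of infinity. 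Your minimum-principle argument is sound here and actually cleaner than you feared: since $\K_f$ is the filled Julia set of a \emph{polynomial}, its complement in $\hat\C$ is exactly the connected basin of infinity, so there are no bounded complementary components, and the argument needs no boundary regularity at all — it is an interior statement ($g_f$ harmonic and $\ge 0$ on the connected set $\mathcal{D}\setminus\{\infty\}$, not constant because it blows up at $\infty$, hence strictly positive). Your closing worry about irregular boundary points is therefore moot. The trade-off is that the paper's citation is shorter, while your argument is self-contained modulo standard potential theory; the only thing you should make explicit if writing it up is the connectedness of $\C\setminus\K_f$ for a polynomial, which you implicitly use when passing to ``the unbounded component.''
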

\begin{proof}
If $f$ is monic, we have $I(\mu_f) = 0$ by \eqref{eq:energycoeff} (see also \cite[Theorem 6.5.1]{Ransford}),
 and since $p_{\mu_f}\geq I(\mu_f)$, we conclude that 
$ p_{\mu_f}\geq 0.$ 
By Frostman's Theorem~\cite[Theorem 3.3.4]{Ransford}, we have $p_{\mu_f}(z)=I(\mu_f)$ for $z$ in the interior of $\K_f$, and since $\PP^1(\C) \setminus \K_f$ is a regular domain~\cite[Corollary~6.5.5]{Ransford}, we have $p_{\mu_f}(z) = I(\mu_f)$ on $\J_f$ also~\cite[Theorem~4.2.4]{Ransford}.

Conversely, since the Julia set $\J_f = \partial \K_f$ is  non-polar, we have $g_{f}(z,\infty)>0$ for $z\not \in \K_f$~\cite[Theorem 4.4.3]{Ransford}. 
By equation \eqref{eq:g=p}, we conclude that $p_{\mu_f}(z)=0$ if and only if $z \in \K_f$. 
\end{proof}

 \section{Convergence and positivity of the dynamical Mahler measure} \label{sec:well-defined}
 We will prove that the dynamical Mahler measure defined by \eqref{eq:dmm-defi} is well-defined. Moreover, it is  non-negative when $P \in \Z[x_1,\dots,x_n]$ is nonzero. We will follow a structure of a proof that is similar to that of \cite[Lemma 3.7]{EverestWard}. 
 
 We start first with the following result, which  can be considered an analogue to Jensen's formula in the dynamical universe. 
\begin{lem}\label{lem:dynamicaljensen}
  Let $f \in \Z[z]$ monic  of degree $d\geq 2$. 	If $P(x) = a \prod_{i} (x-\alpha_i)\in \C[x]$, then 
\[
\m_f(P) = \log|a| + \sum_{i} p_{\mu_f}(\alpha_i).
\]
\end{lem}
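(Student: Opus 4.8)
The plan is to establish the dynamical Jensen formula by reducing it, via Fubini's theorem, to the case of a single linear factor. First I would write $\log|P(z)| = \log|a| + \sum_i \log|z-\alpha_i|$, so that
\[
\m_f(P) = \int_{\J_f} \log|P(z)|\, d\mu_f(z) = \log|a| + \sum_i \int_{\J_f} \log|z-\alpha_i|\, d\mu_f(z).
\]
Here I am using that $\mu_f$ is a probability measure, so the constant term integrates to $\log|a|$, and that the integral of a finite sum is the sum of integrals. Each remaining integral is exactly $\int_{\J_f}\log|\alpha_i - z|\, d\mu_f(z) = p_{\mu_f}(\alpha_i)$ by the definition~\eqref{eq:potential} of the potential function (note $\log|z-\alpha_i| = \log|\alpha_i - z|$, so the orientation does not matter). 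This yields the claimed identity
\[
\m_f(P) = \log|a| + \sum_i p_{\mu_f}(\alpha_i).
\]

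The one genuine subtlety — and the step I expect to be the main obstacle — is justifying that each integral $\int_{\J_f}\log|z-\alpha_i|\, d\mu_f(z)$ is finite (equivalently, that $p_{\mu_f}(\alpha_i) > -\infty$), so that the manipulations above are legitimate and $\m_f(P)$ itself is well-defined. This is where the complex-dynamics background from Section~\ref{sec:Ransford-is-our-friend} does the work: since $f$ is monic of degree $d\ge 2$, the Julia set $\J_f$ is compact and non-polar, and by~\eqref{eq:energycoeff} the energy $I(\mu_f) = 0$ is finite, so $\mu_f$ is not a measure of $-\infty$ energy. By Frostman's theorem the potential $p_{\mu_f}$ satisfies $p_{\mu_f} \ge I(\mu_f) = 0$ everywhere (this is also Proposition~\ref{prop:nonnegpotential}), hence $p_{\mu_f}(\alpha_i)$ is a well-defined nonnegative real number for every $\alpha_i\in\C$; there is no $-\infty$ to worry about. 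Since $\log|z - \alpha_i|$ is bounded above on the compact set $\J_f$ and $\mu_f$ is a finite measure, the integral is also bounded above, so it converges to a finite value.

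With finiteness in hand, the computation is just linearity of the integral over a finite sum of integrable functions plus the normalization $\mu_f(\J_f) = 1$, and there is nothing further to prove. I would remark that this lemma simultaneously shows $\m_f(P)$ converges for every nonzero $P\in\C[x]$ (a special case of the general well-definedness promised after~\eqref{eq:dmm-defi}), and that it is the dynamical analogue of~\eqref{eq:onevariable}: the role of $\log|\alpha_i|$ restricted to $|\alpha_i|>1$ in the classical formula is played by $p_{\mu_f}(\alpha_i)$, which by Proposition~\ref{prop:nonnegpotential} vanishes precisely when $\alpha_i$ lies in the filled Julia set $\K_f$ and is strictly positive otherwise — recovering the classical statement when $f(z) = z^2$, where $\K_f$ is the closed unit disc and $p_{\mu_f}(\alpha) = \log^+|\alpha|$.
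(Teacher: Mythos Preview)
Your argument is correct and follows exactly the paper's approach: expand $\log|P(z)|$ as $\log|a|+\sum_i\log|z-\alpha_i|$, integrate term by term against $\mu_f$, and identify each integral as $p_{\mu_f}(\alpha_i)$ via~\eqref{eq:potential}. You supply more justification for finiteness than the paper does at this point (the paper defers that discussion to the surrounding propositions), but the substance is identical.
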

\begin{proof} By definition, we have
	\[	
	\m_f(P)  = \int_{\J_f} \log \big|a \prod_{i} (z-\alpha_i)\big| d\mu_f (z)= \int_{\J_f}  \log |a| d\mu_f(z)+ \sum_{i}\int_{\J_f} \log|z-\alpha_i|d\mu_f(z),
	\]
	and the result follows from equation~\eqref{eq:potential}.
\end{proof}
 
 Now we proceed to prove the main result of this section. 
  \begin{prop}
  Let $f \in \Z[z]$ monic  of degree $d\geq 2$ and let $P\in  \C(x_1,\dots,x_n)^\times$. Then the integral in \eqref{eq:dmm-defi} defining the $f$-dynamical Mahler measure converges. Moreover, if $P \in \Z[x_1,\dots,x_n]$ is nonzero, then $\m_f(P)\geq 0$. 
 \end{prop}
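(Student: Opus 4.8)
The plan is to adapt the inductive argument of \cite[Lemma 3.7]{EverestWard} to the dynamical setting, after two preliminary reductions. The first reduction is to $P\in\C[x_1,\dots,x_n]\setminus\{0\}$: writing a rational function as $P=P_1/P_2$ with $P_1,P_2\in\C[x_1,\dots,x_n]$ nonzero, once convergence is known for the two polynomials one gets $\log|P|=\log|P_1|-\log|P_2|$ as an $L^1$ function and $\m_f(P)=\m_f(P_1)-\m_f(P_2)$. The second reduction is the measure-theoretic fact that \emph{a nonzero polynomial never vanishes on a positive-measure set}: for every $g\in\C[x_1,\dots,x_m]\setminus\{0\}$ one has $\mu_f^{\otimes m}(\{g=0\})=0$. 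I would prove this by induction on $m$: the case $m=1$ holds because a nonzero one-variable polynomial has finitely many roots while $\mu_f$ is non-atomic for every $f$ of degree $d\ge2$ (this is classical, see \cite{FLM, Lyubich}; it also follows directly from $f_*\mu_f=\mu_f$, which forces the $\mu_f$-mass along any forward orbit to be non-decreasing); and the case $m>1$ follows by writing $g$ as a polynomial in $x_m$, applying Fubini, and noting that for $(z_1,\dots,z_{m-1})$ off the null set where the leading coefficient vanishes the slice $g(z_1,\dots,z_{m-1},\cdot)$ is a nonzero polynomial in $x_m$.

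Next I would prove convergence by induction on $n$. For $n=1$, the dynamical Jensen formula (Lemma~\ref{lem:dynamicaljensen}) gives $\m_f(P)=\log|a|+\sum_i p_{\mu_f}(\alpha_i)$, which is finite because $p_{\mu_f}$ is everywhere finite. For the inductive step, write $P=\sum_{j=0}^{D}c_j(x_1,\dots,x_{n-1})\,x_n^{j}$ with $c_D\not\equiv0$ and set $F(\underline z')=\int_{\J_f}\log|P(\underline z',z_n)|\,d\mu_f(z_n)$ for $\underline z'\in(\J_f)^{n-1}$. The set $B$ of $\underline z'$ with $P(\underline z',\cdot)\equiv0$ is contained in the zero locus of some $c_{j_0}\not\equiv0$, hence $\mu_f^{\otimes(n-1)}$-null by the previous step, and off $B$ Lemma~\ref{lem:dynamicaljensen} makes $F(\underline z')$ a finite number. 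The crude bound $|P(\underline z',z_n)|\le\big(\sum_j|c_j(\underline z')|\big)\max(1,|z_n|)^{D}$ gives, after integrating in $z_n$, $F(\underline z')\le\log\big(\sum_j|c_j(\underline z')|\big)+D\int_{\J_f}\log^{+}|z_n|\,d\mu_f(z_n)$, where both terms are bounded above on the compact set $(\J_f)^{n-1}$ (the last integral is finite since $\J_f$ is compact); thus $F$ is bounded above. On the set $\{c_D\ne0\}$, Lemma~\ref{lem:dynamicaljensen} and Proposition~\ref{prop:nonnegpotential} give $F(\underline z')=\log|c_D(\underline z')|+\sum_k p_{\mu_f}(\beta_k)\ge\log|c_D(\underline z')|$, and $\log|c_D|$ is $\mu_f^{\otimes(n-1)}$-integrable by the inductive hypothesis applied to the nonzero polynomial $c_D$. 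These two bounds together show $\log|P|\in L^1\big((\J_f)^{n},\mu_f^{\otimes n}\big)$, and Fubini's theorem then identifies the iterated integral defining $\m_f(P)$ with the finite number $\int_{(\J_f)^n}\log|P|\,d\mu_f^{\otimes n}$.

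Finally, for positivity I would run a second induction on $n$ with $P\in\Z[x_1,\dots,x_n]\setminus\{0\}$. When $n=1$, Jensen gives $\m_f(P)=\log|a|+\sum_i p_{\mu_f}(\alpha_i)\ge0$ since $a\in\Z\setminus\{0\}$ forces $|a|\ge1$ and $p_{\mu_f}\ge0$ by Proposition~\ref{prop:nonnegpotential}. When $n>1$, the leading coefficient $c_D$ lies in $\Z[x_1,\dots,x_{n-1}]\setminus\{0\}$, and integrating the almost-everywhere inequality $F\ge\log|c_D|$ yields $\m_f(P)\ge\m_f(c_D)\ge0$ by the inductive hypothesis. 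I expect the real difficulty to lie in the measure-theoretic bookkeeping around the degenerate slices — chiefly establishing or correctly invoking the non-atomicity of $\mu_f$, deducing that polynomial zero loci are $\mu_f^{\otimes m}$-null, and checking that the possible drop in degree of $P(\underline z',\cdot)$ on this null set does no harm; the analytic estimates themselves are routine consequences of Lemma~\ref{lem:dynamicaljensen} and the compactness of $\J_f$.
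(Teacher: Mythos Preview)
Your argument is correct and shares the same skeleton as the paper's: upper bound from compactness of $\J_f$, dynamical Jensen (Lemma~\ref{lem:dynamicaljensen}) plus $p_{\mu_f}\ge 0$ to control the lower tail via the leading coefficient, induction on $n$, and finally the rational-function case by writing $P=F/G$.

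The main technical difference is in how the lower-tail integrability is established. The paper factors $P=a_d(x_2,\dots,x_n)\prod_j(x_1-g_j(x_2,\dots,x_n))$ with the $g_j$ algebraic functions, rewrites the integral as $\m_f(a_d)+\sum_j\int p_{\mu_f}(g_j)$, and then proves finiteness by a monotone truncation $\alpha_N=\m_f(a_d)+\sum_j\int_{\{|g_j|\le N\}}p_{\mu_f}(g_j)$: the $\alpha_N$ are increasing (since $p_{\mu_f}\ge 0$) and bounded above by the compactness bound $\log L_C(P)$, so they converge. This packaging uses only upper semicontinuity of the integrand and sidesteps any discussion of null sets. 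You instead show $\log|P|\in L^1$ directly: bounded above by compactness, and $F(\underline z')\ge\log|c_D(\underline z')|$ slice-wise, with $\log|c_D|$ integrable by induction. Your route is cleaner from a pure measure-theory standpoint, but it costs you the extra ingredient that $\mu_f$ is non-atomic (so that polynomial zero loci are $\mu_f^{\otimes m}$-null); the paper's monotone-convergence argument never needs this. Both approaches are standard and yield positivity in the integer-coefficient case the same way, via $\m_f(P)\ge\m_f(c_D)\ge 0$.
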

 
 \begin{proof}
First assume that  $P\in  \C[x_1,\dots,x_n]$ is nonzero. Let $C=\sup_{z \in \J_f} |z|$, and write 
  \[P(x_1,\dots,x_n)=\sum a_{i_1,\dots,i_n}x_1^{i_1}\cdots x_n^{i_n}.\]
   Then for $(z_1,\dots,z_n)\in \J_f^n$, we have 
   \[|P(z_1,\dots,z_n)|\leq \sum |a_{i_1,\dots,i_n}|C^{i_1+\cdots+i_n}=:L_C(P).\]
  From this, we see that $\m_f(P)\leq \log( L_C(P))<\infty$. 

  For the rest of the statement, we proceed by induction. The case $n=1$ is a consequence of what we have just discussed and Lemma \ref{lem:dynamicaljensen}, which implies that $\m_f(P)>-\infty$ when $P\in \C[x]$ is nonzero and $\m_f(P)\geq 0$ when $P\in \Z[x]$ is nonzero. Assume that the result is proven for all polynomials in $n-1$ variables. We write 
  \begin{align*}
  P(x_1,\dots,x_n) &= a_d(x_2,\dots, x_n)x_1^d+\cdots +a_0(x_2,\dots,x_n)\\
  &= a_d(x_2,\dots, x_n) \prod_{j=1}^d (x_1-g_j(x_2,\dots,x_n)),
\end{align*}
for certain algebraic functions $g_1,\dots,g_d$. Then we have that 
\begin{align*}
 \int_{\J_f} \dots \int_{\J_f} &\log | P(z_1, \dots, z_n)| d\mu_f(z_1) \cdots d\mu_f(z_n)\\
 &= \m_f(a_d)+ \sum_{j=1}^d
 \int_{\J_f} \dots \int_{\J_f} \log |z_1-g_j(z_2,\dots,z_n)| d\mu_f(z_1) \cdots d\mu_f(z_n) \\
 &= \m_f(a_d)+\sum_{j=1}^d
 \int_{\J_f} \dots \int_{\J_f}p_{\mu_f}(g_j(z_2,\dots,z_n))d \mu_f(z_2) \cdots d\mu_f(z_n) \\
 &= \m_f(a_d)+
 \int_{\J_f} \dots \int_{\J_f} \sum_{j=1}^d p_{\mu_f}(g_j(z_2,\dots,z_n))d \mu_f(z_2) \cdots d\mu_f(z_n).
\end{align*}
Here our final integrand is upper semicontinuous, since $p_{\mu_f}$ is upper semicontinuous and the multiset of values $\{g_i(z_2, \dotsc, z_n)\}$ depends continuously on $(z_2, \dotsc, z_n)$ (even though the individual $g_i$ are not themselves continuous at the branch locus).  By Proposition \ref{prop:nonnegpotential}, $p_{\mu_f}\geq 0$.
By induction,  $\m_f(a_d)\geq 0$. Therefore the above integral is non-negative and the first paragraph shows that the entire right-hand side is bounded above. Therefore the integral defining the Mahler measure exists.

Now assume that $P\in \C(x_1,\dots, x_n)^\times$. We write $P=\frac{F}{G}$, with $F, G \in  \C[x_1,\dots, x_n]$ nonzero and apply the above to conclude that $\m_f(F), \m_f(G)$ are well-defined. Then we have $\m_f(P)=\m_f(F)-\m_f(G)$ and $\m_f(P)$ is well-defined too. This concludes the proof of the statement. 
 \end{proof}

\section{Dynamical Kronecker's Lemma} \label{sec:Kronecker}

The goal of this section is to characterize the single variable polynomials with integral coefficients $P$ having $\m_f(P)=0$, which is the dynamical version of Kronecker's Lemma:
\begin{lem}[Kronecker, \cite{Kronecker}] Let $P(x)=\prod_i (x-\alpha_i) \in \Z[x]$. If all of the roots of $P$ satisfy $|\alpha_i | \leq 1$,  then the $\alpha_i$ are zero or roots of unity.
\end{lem}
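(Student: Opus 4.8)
The plan is to use that each root $\alpha = \alpha_i$ is an algebraic integer (since $P$ is monic with integer coefficients) together with the disc hypothesis to show that the multiplicative semigroup $\{\alpha^k : k \ge 1\}$ is finite; as $\alpha \ne 0$, finiteness forces $\alpha^a = \alpha^b$ for some $a < b$, hence $\alpha^{b-a} = 1$.

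First I would fix a root $\alpha \neq 0$ and let $Q \in \Z[x]$ be its minimal polynomial over $\Q$, monic of some degree $m \le \deg P$. Since $Q \mid P$ in $\Q[x]$, every conjugate of $\alpha$ is among the roots $\alpha_i$ of $P$, so all conjugates of $\alpha$ lie in the closed unit disc. Then, for each $k \ge 1$, the number $\alpha^k$ is again an algebraic integer with $[\Q(\alpha^k):\Q] \le m$, and every conjugate of $\alpha^k$ is a $k$-th power of a conjugate of $\alpha$, hence also lies in the closed unit disc. Consequently the minimal polynomial $Q_k$ of $\alpha^k$ is a monic integer polynomial of degree at most $m$ all of whose roots lie in the closed unit disc.

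The key point is a finiteness observation: if a monic polynomial of degree $d \le m$ has all roots in the closed unit disc, then its coefficient of $x^{d-j}$ is, up to sign, the $j$-th elementary symmetric function of the roots, of absolute value at most $\binom{d}{j} \le 2^m$. Hence the $Q_k$ range over a finite set of polynomials, and since each has at most $m$ roots, the set $\{\alpha^k : k \ge 1\}$ is finite. Pigeonhole yields $1 \le a < b$ with $\alpha^a = \alpha^b$; as $\alpha \ne 0$ this gives $\alpha^{b-a} = 1$, so $\alpha$ is a root of unity. Applying this to each nonzero root proves the lemma.

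The one step carrying the real content is this boundedness argument, which is exactly where the integrality of the coefficients of $P$ enters: it turns the a priori infinite family $\{Q_k\}_{k \ge 1}$ into a finite one. An essentially equivalent route avoids minimal polynomials by working directly with $P_k(x) := \prod_i (x - \alpha_i^k)$, which lies in $\Z[x]$ because its coefficients are symmetric polynomials with integer coefficients in the $\alpha_i$, hence integer polynomials in the coefficients of $P$; the same binomial bound shows only finitely many $P_k$ occur, so $P_k = P_l$ for some $k < l$, giving a permutation $\sigma$ with $\alpha_i^l = \alpha_{\sigma(i)}^k$ for all $i$, and iterating (with $r$ the order of $\sigma$) yields $\alpha_i^{l^r} = \alpha_i^{k^r}$, whence each $\alpha_i$ is zero or a root of unity.
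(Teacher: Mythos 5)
Your proof is correct. Note that the paper states this classical lemma with only a citation to Kronecker and does not itself prove it; however, the second argument you sketch --- forming $P_k(x)=\prod_i(x-\alpha_i^k)$, noting the coefficients are integers bounded by $\binom{d}{j}$, pigeonholing to find $P_k=P_l$ with $k<l$, and iterating the resulting permutation to get $\alpha_i^{l^r}=\alpha_i^{k^r}$ --- is structurally identical to the paper's proof of the Dynamical Kronecker's Lemma (Lemma~\ref{lem:dynamicalKronecker}), with $f^n(\alpha_i)$ in place of $\alpha_i^k$, the compact filled Julia set $\K_f$ in place of the unit disc, and ``preperiodic point of $f$'' in place of ``root of unity.'' Your primary route via the minimal polynomials $Q_k$ of $\alpha^k$ is a valid, slightly more local variant: rather than compare entire root multisets and extract a permutation, you use the finiteness of $\{Q_k\}$ to deduce finiteness of $\{\alpha^k : k\ge 1\}$ directly, so a single repetition $\alpha^a=\alpha^b$ finishes. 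In both versions the load-bearing step is the one you single out: integrality of the coefficients together with the unit-disc bound forces an a priori infinite family into a finite set.
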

In fact, more is true. Equation \eqref{eq:onevariable} implies the following statement. 
\begin{cor}
 Let $P \in \Z[x]$. If  $\m(P)=0$, then the roots of $P$ are either zero or roots of unity. Conversely, if $P$ is primitive and its roots  either zero or roots of unity, then $\m(P)=0$. 
\end{cor}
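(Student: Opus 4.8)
The plan is to read everything off the explicit formula \eqref{eq:onevariable}, namely $\m(P) = \log|a| + \sum_{|\alpha_i|>1}\log|\alpha_i|$, using that for $P\in\Z[x]$ every summand on the right is non-negative. Write $P(x) = a\prod_i(x-\alpha_i)$ with $a\in\Z\setminus\{0\}$ the leading coefficient, so $|a|\ge 1$ and $\log|a|\ge 0$, while $\log|\alpha_i| > 0$ for each root with $|\alpha_i|>1$; hence $\m(P)\ge 0$, with equality forcing $|a|=1$ and forcing $P$ to have no root outside the closed unit disc.

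For the forward direction I would argue: if $\m(P)=0$ then by the above $|a|=1$, so $\prod_i(x-\alpha_i) = \pm P$ is a monic polynomial in $\Z[x]$, and all of its roots satisfy $|\alpha_i|\le 1$. Kronecker's Lemma (the lemma just stated, applied to this monic integer polynomial) then gives that each $\alpha_i$ is $0$ or a root of unity, which is the claimed conclusion.

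For the converse, suppose $P$ is primitive and every root of $P$ is $0$ or a root of unity. Since $0$ and the roots of unity are algebraic integers, the elementary symmetric functions of the $\alpha_i$ are algebraic integers, so the monic polynomial $R(x) := \prod_i(x-\alpha_i)$ has algebraic-integer coefficients; but $R = P/a \in \Q[x]$, so those coefficients are in fact rational integers and $R\in\Z[x]$. Then $a$ divides every coefficient of $P = aR$, hence $a$ divides the content of $P$, which equals $1$; thus $|a|=1$. Since moreover $|\alpha_i|\le 1$ for every root, the sum in \eqref{eq:onevariable} is empty and $\m(P) = \log|a| = 0$.

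The only point that is not a literal quotation of \eqref{eq:onevariable} and its sign structure is the step in the converse that a monic polynomial in $\Q[x]$ all of whose roots are algebraic integers lies in $\Z[x]$ — i.e.\ that a rational algebraic integer is an ordinary integer, applied to the coefficients of $R$; I expect this to be the only place needing a sentence of justification, while the rest is bookkeeping with the non-negativity of the terms in \eqref{eq:onevariable}.
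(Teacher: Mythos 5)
Your proof is correct and follows the paper's intended route: the paper states the corollary as an immediate consequence of equation \eqref{eq:onevariable} together with Kronecker's Lemma, and that is exactly the argument you carry out, filling in the bookkeeping (non-negativity of each term, the reduction to a monic polynomial, and the content argument giving $|a|=1$ in the converse) that the paper leaves implicit.
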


An immediate consequence of Lemma \ref{lem:dynamicaljensen} is the following. 
\begin{lem} \label{lem:cortodynamicalJensen}  Let $f\in \Z[z]$ be monic of degree $d\geq 2$ and $P(x)=a\prod_i (x-\alpha_i) \in \Z[x]$. Then we have $\m_f(P)=0$ if and only if $|a| = 1$ and all roots $\alpha_i$ lie in  $\K_f$. 
\end{lem}

\begin{proof}
   Because $a \in \mathbb{Z}$, we have $\log |a| \ge 0$, and by Proposition \ref{prop:nonnegpotential},  all the other summands in  the dynamical Jensen formula from Lemma \ref{lem:dynamicaljensen} are also nonnegative. Therefore, $\m_f(P)=0$ if and only if $\log |a|  = 0$ and $p_{\mu_f}(\alpha_i) = 0$ for all $i$.  The first condition $ \log |a| = 0$ holds exactly when $a = \pm 1$. The second condition is an immediate consequence of the second statement in Proposition \ref{prop:nonnegpotential}.
   \end{proof}

We are now ready to prove the Dynamical Kronecker's Lemma (Lemma $\ref{lem:dynamicalKronecker}$).

\begin{proof}[Proof of Lemma $\ref{lem:dynamicalKronecker}$] 
Since Lemma \ref{lem:cortodynamicalJensen}  implies that $P(x)$ is (up to a sign) monic, we can write
\[P(x) =\pm \prod_i (x-\alpha_i).\]
Consider the polynomials 
\[P_n(x)=\prod_{i=1}^d(x-f^n(\alpha_i)).\]
  The coefficients of $P_n$ are symmetric functions in the algebraic integers $f^n(\alpha_i)$ and thus symmetric functions of the $\alpha_i$, so they are elements of $\Z$ (all the conjugates of each $\alpha_i$ are present as roots of $P$, since the coefficients are rational). 
Since $\m_f(P)=0$, Lemma \ref{lem:cortodynamicalJensen}  implies that $\alpha_i\in \K_f$, and the same is true for $f^n(\alpha_i)$. Since $\K_f$ is compact,  
  the $f^n(\alpha_i)$ are uniformly bounded, and the same is true for the coefficients of $P_n$. 
  Thus, the set $\{P_n\}_{n\in\N}$ must be finite. In other words, there are 
  $n_1\not = n_2$ for which
  \[P_{n_1}=P_{n_2}.\]
That means,
\[\{f^{n_1}(\alpha_1),\dots,f^{n_1}(\alpha_d)\}=\{f^{n_2}(\alpha_1),\dots,f^{n_2}(\alpha_d)\}.\]
Thus, there is a permutation $\sigma \in \mathbb{S}_d$ such that
\[f^{n_1}(\alpha_i)=f^{n_2}(\alpha_{\sigma(i)}).\]
If $\sigma$ has order $k$, we get,
\[f^{kn_1}(\alpha_i)=f^{kn_2}(\alpha_{i}),\]
which shows that each $\alpha_i$ is preperiodic. 
 \end{proof}

\begin{remk}
A more general approach to dynamical Mahler measure, for example as suggested in~\cite{{Favre-Rivera-Letelier}}, may allow $\m_f$  to be defined for $f$ a non-monic polynomial or indeed a rational function. The following argument shows that Lemma~\ref{lem:dynamicalKronecker} continues to hold in this case:
 Noting that $\log|PQ| = \log|P| + \log|Q|$, we see
from equation~\eqref{eq:adelic} that $\m_f(P)$ vanishes  with the $f$-canonical height of the roots of $P$. 
Over a global field, the canonical height $\hat{h}_f$ of a rational function vanishes precisely at preperiodic points for $f$  (see, for example, \cite[Theorem 3.22]{Silverman-arithmetic-dynamical}). 
\end{remk}

\section{Weak Dynamical Boyd--Lawton Theorem } \label{sec:weakBL}

A result of  Boyd \cite{Boyd-speculations} and Lawton~\cite{Lawton} given by equation \eqref{eq:BL}
allows one to compute, in the classical setting, the higher dimensional Mahler measure by a series of approximations by one-dimensional Mahler measures. Here we exhibit the dynamical version of a weaker result for two variables, Proposition \ref{prop:weakdynamicalBL}. Our  result is analogous to Lemma 2 in \cite{Boyd-Kronecker}.

First we need to establish the following lemma, which is analogous to Lemma 1 in \cite{Boyd-Kronecker}.
\begin{lem}\label{lem:Weierstrass}
If $F : \J_f^2 \to \R$ is continuous, then
\[
\lim_{n\to \infty} \int_{\J_f} F(z, f^n(z)) d\mu_f(z) = \int_{\J_f} \int_{\J_f} F(z_1, z_2) d\mu_f(z_1)d\mu_f(z_2) .
\]
\end{lem}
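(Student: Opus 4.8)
The plan is to prove this via a Weierstrass-type approximation argument, mirroring the structure of Lemma 1 in~\cite{Boyd-Kronecker}. The statement for continuous $F$ on $\J_f^2$ reduces, by uniform approximation, to the case where $F$ is a product $F(z_1, z_2) = u(z_1) v(z_2)$ of continuous functions of one variable, or more generally a finite linear combination of such products. Indeed, by the Stone--Weierstrass theorem the algebra generated by functions of the form $u(z_1)v(z_2)$ is dense in $C(\J_f^2)$ in the sup-norm; since both sides of the claimed identity are bounded by $\|F\|_\infty$ (the measures are probability measures) and depend linearly and continuously on $F$, it suffices to verify the identity on this dense subalgebra, hence on a single product $u(z_1)v(z_2)$.

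So the core step is: for continuous $u, v : \J_f \to \R$, show
\[
\lim_{n\to\infty} \int_{\J_f} u(z)\, v(f^n(z))\, d\mu_f(z) = \left(\int_{\J_f} u\, d\mu_f\right)\left(\int_{\J_f} v\, d\mu_f\right).
\]
This is where I would invoke the mixing (or at least weak-mixing / equidistribution) property of the measure-preserving dynamical system $(\J_f, \mu_f, f)$. The measure $\mu_f$ is the unique measure of maximal entropy and is well known to be exact, hence mixing of all orders; in particular the system is mixing, which gives exactly $\int u\cdot (v\circ f^n)\, d\mu_f \to \int u\, d\mu_f \int v\, d\mu_f$. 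Alternatively, and perhaps more in the spirit of the potential-theoretic tools already set up in the paper, one can argue via equidistribution: the key point is that $(f^n)_* \mu_f = \mu_f$ together with the expanding nature of $f$ forces the conditional distribution of $f^n(z)$ to equidistribute toward $\mu_f$ independently of the starting fiber, which is precisely a mixing statement. I would cite the standard reference (e.g. Lyubich~\cite{Lyubich}, Freire--Lopes--Ma\~ne~\cite{FLM}, or Brolin~\cite{Brolin}, already in the bibliography) for the fact that $\mu_f$ is mixing.

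The main obstacle is simply making the mixing input precise and correctly attributed — the approximation step is routine, but one must be careful that the relevant dynamical property really is available for $\mu_f$ on a general (possibly disconnected, possibly with critical points in $\J_f$) polynomial Julia set. This is true: $\mu_f$ is mixing for every $f \in \C[z]$ of degree $d \ge 2$. A secondary, more elementary subtlety is the reduction to product functions: one should note that constants are continuous, so the approximating algebra contains them, and that restricting attention to real-valued $u,v$ (or splitting into real and imaginary parts) causes no issue since everything is $\R$-linear. I would present the proof in three short steps: (1) reduce to $F = u\otimes v$ by Stone--Weierstrass plus the uniform bound; (2) for such $F$, cite mixing of $\mu_f$ to get the limit; (3) conclude by linearity and an $\varepsilon/3$ argument controlling the approximation error on both sides simultaneously.
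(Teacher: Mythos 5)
Your proof is correct, but it takes a genuinely different route from the paper's. Despite the label \texttt{lem:Weierstrass}, the paper's actual proof uses no approximation argument at all: it applies the change of variables $z_2 = f^n(z)$, exploiting the fact that $\mu_f$ is the balanced measure (so that $\int h \, d\mu_f = \int d^{-n}\sum_{z_1 \in f^{-n}(z_2)} h(z_1) \, d\mu_f(z_2)$), rewrites the left side as $\int_{\J_f} d^{-n}\sum_{z_1 \in f^{-n}(z_2)} F(z_1, z_2)\, d\mu_f(z_2)$, passes the limit inside by bounded convergence, and then cites Ransford's Theorem 6.5.8 --- the Brolin--Lyubich--Freire--Lopes--Ma\~n\'e equidistribution of preimages --- to identify the inner limit with $\int F(z_1, z_2)\, d\mu_f(z_1)$. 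Your argument instead factors through the tensor-product algebra by Stone--Weierstrass and invokes mixing of $(\J_f, f, \mu_f)$, which is a true and standard fact (Ma\~n\'e proved exactness, hence mixing of all orders; Lyubich also treats ergodic properties). Your approach has the virtue of being conceptually modular and would apply verbatim to any mixing system; the paper's version stays entirely within the potential-theoretic toolkit it has already set up via Ransford and avoids importing exactness/mixing from the ergodic literature. One small caution about attribution: Brolin and Freire--Lopes--Ma\~n\'e, as cited in the bibliography, establish the equidistribution-of-preimages statement (which is precisely what the paper leans on) rather than mixing per se; the cleanest citation for mixing of $\mu_f$ is Ma\~n\'e's separate ergodicity paper, so you would want to add that reference rather than reuse the existing ones. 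With that caveat, your three-step plan (Stone--Weierstrass reduction, mixing for $u \otimes v$, $\varepsilon/3$ patch) is a complete and valid proof.
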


\begin{proof}
By making the change of variables $z_2 = f^n(z)$, we have 
\begin{equation}\label{change var}
	\int_{\J_f} F(z, f^n(z)) d \mu_f(z) = \int_{z_2 \in \J_f} d^{-n} \sum_{z_1 \in f^{-n}(z_2)} F(z_1, z_2) d\mu_f(z_2),
\end{equation}
where $d=\deg(f)$. 
We use this to expand our left hand side,  interchanging the limit and integral by the bounded convergence theorem.

\begin{equation*}
	\begin{split}
		\lim_{n \to \infty} \int_{\J_f} F(z, f^n(z)) d \mu_f(z) 
		& = \lim_{n \to \infty} \int_{z_2 \in \J_f} d^{-n} \sum_{z_1 \in f^{-n}(z_2)} F(z_1, z_2) d\mu_f(z_2)\\ & = \int_{z_2 \in \J_f} \left (\lim_{n \to \infty} d^{-n} \sum_{z_1 \in f^{-n}(z_2)} F(z_1, z_2) \right) d\mu_f(z_2)
	\end{split}
\end{equation*}
By \cite[Theorem 6.5.8]{Ransford}, we have that the limit inside the parentheses is equal to $\int_{z_1 \in \J_f} F(z_1, z_2) d\mu_f(z_1)$, as desired.
\end{proof}

We are now ready to prove the Weak Dynamical Boyd--Lawton Theorem. 

\begin{proof}[Proof of Proposition  $\ref{prop:weakdynamicalBL}$] 
	We follow Lemma 2 in \cite{Boyd-Kronecker}. First notice that for $\delta>0$, $\log\left(|P(z_1,z_2)|+\delta\right)$ is a continuous function on $\J_f$. Therefore, by Lemma \ref{lem:Weierstrass}, 
\[\lim_{n\to\infty} \int_{\J_f}\log (|P(z, f^n(z))|+\delta) d\mu_f(z)=\int_{\J_f}\int_{\J_f}\log (|P(z_1,z_2)|+\delta) d\mu_f(z_1)d\mu_f(z_2).\]
	Since $\m_f(P(z, f^n(z)) \leq \int_{\J_f}\log (|P(z, f^n(z))|+\delta) d\mu_f(z)$, we have
	\[\limsup_{n \rightarrow \infty} \m_f(P(z, f^n(z))) \leq  \int_{\J_f}\int_{\J_f}\log (|P(z_1,z_2)|+\delta) d\mu_f(z_1)d\mu_f(z_2),\]
	and this is true for all $\delta>0$.  Letting $\delta \rightarrow 0$ and using monotone convergence gives the result. 
\end{proof}

\section{A family of polynomials having dynamical Mahler measure zero} \label{sec:zeromeasure}

In Section \ref{sec:Kronecker} we discussed necessary and sufficient conditions for a single variable polynomial with integer coefficients to have dynamical Mahler measure zero.  In this section we will show that the dynamical Mahler measure of $P(x, y) = x - y$ is zero independently of $f$ as long as $f$ is monic, and we will also use this fact to construct 
 a more general class of polynomials whose dynamical Mahler measure is zero. This is the beginning of the proof of Theorem \ref{thm:mainresult}, which will be finished  in Section \ref{sec:2-Kronecker}.

Recall that the multivariate dynamical Mahler measure 
\[
\m_f(P) = \int _{\J_f}\int _{\J_f} \log |z_1-z_2| d\mu_f(z_1) d \mu_f(z_2)
\]
is precisely the energy $I(\mu_f)$ of the equilibrium measure. By \eqref{eq:energycoeff}, when $f$ is monic, $I(\mu_f)=0$, and therefore $\m_f(P) =0$. 


Notice that the strong Boyd--Lawton Conjecture, which claims that $\m_f(P(x,y))=\lim_{n\rightarrow \infty} \m_f(P(x,f^n(x))$,  applies in this case. Indeed, for $n > 0$, the polynomial $P_{n}(x) = x-  f^n(x) $ has roots equal to the $n$-periodic points of $f$, so 
\[
\m_f(P_n) = \sum_{f^n(z) = z} g_f(z, \infty),
\]
where $g_f(z,\infty)$ denotes the Green's function of the filled Julia set $\K_f$.
However if $z$ is periodic, then $g(z,\infty) = \lim_{m \to \infty} d^{-m} \log^+|f^m(z)|$ is $0$ since the log term is bounded.
Hence $\lim_{n \to \infty} \m_f(P_n) = 0$, and this coincides with $\m_f(P)=0$, which is what one would predict from the  strong Boyd--Lawton Conjecture.

Now we intend to prove a more general result. 
\begin{lem}\label{lem:converseKronecker}Let $L$ be a linear polynomial in $\C[z]$ commuting with an iterate of $f$, and $\tilde{f}$ be a nonlinear polynomial in $\C[z]$ commuting with an iterate of $f$. Then for any nonnegative integers $n, m$,
 \[\m_f\Big(\tilde{f}^n(x)-L(\tilde{f}^m(y))\Big)=0.\]
\end{lem}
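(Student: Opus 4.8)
The plan is to reduce $\m_f\big(\tilde f^n(x) - L(\tilde f^m(y))\big) = 0$ to the already-established fact that $\m_f(x-y) = I(\mu_f) = 0$, by using the invariance and functoriality of the equilibrium measure under maps that commute with an iterate of $f$. The key observation is that if $g \in \C[z]$ commutes with $f^k$ for some $k \geq 1$, then the Julia set and the equilibrium measure of $g$ coincide with those of $f$: indeed $\K_g = \K_{f^k} = \K_f$ and, by uniqueness of the equilibrium measure, $\mu_g = \mu_{f^k} = \mu_f$ (here I would cite Proposition~\ref{prop:nonnegpotential} / the characterization of $\K_f$ via the potential, together with the fact that $f^k$ and $f$ have the same filled Julia set since $f^{kn}(z) \to \infty$ iff $f^n(z) \to \infty$). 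In particular this applies to $g = \tilde f$ and to $g = L$, so $\tilde f_* \mu_f = \mu_f$ and $L_* \mu_f = \mu_f$, and likewise for any iterate $\tilde f^n$ or $L \circ \tilde f^m$.

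First I would record the Jensen-type expansion: writing $Q(x,y) = \tilde f^n(x) - L(\tilde f^m(y))$ and, for fixed $y = z_2$, factoring $Q(x, z_2) = \prod_i (\text{leading coeff})(x - \beta_i(z_2))$ over the roots $\beta_i(z_2)$ in $x$; since $\tilde f^n$ is monic (as $\tilde f$ is — being a non-linear polynomial commuting with an iterate of a monic polynomial, one checks its leading coefficient must be a root of unity, and in the relevant normalization it is $1$; if not I would carry the $\log|a|$ term and note it contributes $0$ because $\m_f$ of the leading coefficient vanishes and the coefficients are algebraic integers), Lemma~\ref{lem:dynamicaljensen} gives
\[
\int_{\J_f} \log|Q(z_1, z_2)|\, d\mu_f(z_1) = \sum_i p_{\mu_f}(\beta_i(z_2)).
\]
Then I would integrate over $z_2$ and use the change-of-variables / pushforward identity exactly as in the proof of Lemma~\ref{lem:Weierstrass} and equation~\eqref{change var}: since $z_1 \mapsto \tilde f^n(z_1)$ pushes $\mu_f$ forward to $\mu_f$, and similarly for the $y$-side, the double integral $\int\int \log|\tilde f^n(z_1) - L(\tilde f^m(z_2))|\, d\mu_f(z_1)d\mu_f(z_2)$ equals $\int\int \log|w_1 - w_2|\, d\mu_f(w_1) d\mu_f(w_2) = I(\mu_f) = 0$ by~\eqref{eq:energycoeff} and monicity of $f$.

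The cleanest way to present this is probably not via the root expansion at all but directly: apply Fubini/Tonelli (legitimate since the integrand of $\log^+$ is bounded on the compact $\J_f \times \J_f$ and $\log^-$ is handled by the convergence results of Section~\ref{sec:well-defined}), then substitute $w_1 = \tilde f^n(z_1)$ in the inner integral using the identity $\int_{\J_f} h(z_1)\, d\mu_f(z_1) = \int_{\J_f} h\, d((\tilde f^n)_*\mu_f) = \int_{\J_f} h\, d\mu_f$ for the pushforward — here using that $\mu_f$ is invariant under any polynomial commuting with an iterate of $f$ — and then substitute $w_2 = L(\tilde f^m(z_2))$ in the outer integral the same way. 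The main obstacle, and the step deserving the most care, is justifying that $\mu_f$ is genuinely invariant under $\tilde f$ and under $L$ (not merely under $f$ itself), i.e. proving $\K_g = \K_f$ and hence $\mu_g = \mu_f$ whenever $g$ commutes with an iterate of $f$; once that functoriality is in hand the rest is a routine change of variables reducing everything to the known value $I(\mu_f) = 0$. A secondary technical point is ensuring the integrals are finite so Fubini applies, but this is exactly what Section~\ref{sec:well-defined} guarantees for polynomials with complex coefficients.
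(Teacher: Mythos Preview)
Your overall strategy is exactly the paper's: reduce the double integral to $I(\mu_f)=0$ by pushing forward $\mu_f$ along $\tilde f^n$ in one variable and $L\circ\tilde f^m$ in the other. The Jensen-formula detour and the monicity worries are unnecessary, as you yourself note.

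The one place where your argument and the paper's genuinely diverge is the justification of the invariance $g_*\mu_f=\mu_f$ for $g$ commuting with an iterate of $f$. You propose to prove this by first establishing $\K_g=\K_{f^k}=\K_f$ and then deducing $\mu_g=\mu_f$ by uniqueness of the equilibrium measure on that set. Two issues: (i) the step $\K_g=\K_{f^k}$ for commuting nonlinear polynomials is a real theorem but is not a consequence of Proposition~\ref{prop:nonnegpotential} as you suggest (that proposition only gives $\K_{f^k}=\K_f$); and (ii) for the linear map $L$ there is no $\K_L$ or $\mu_L$ in the relevant sense, so the sentence ``this applies to $g=L$'' does not go through as written. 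The paper (Lemma~\ref{lem:invariance}) sidesteps both problems with a one-line argument that you should adopt: if $g$ commutes with $f$ (or with $f^k$, and one uses $\mu_{f^k}=\mu_f$), then $f_*(g_*\mu_f)=g_*(f_*\mu_f)=g_*\mu_f$, so $g_*\mu_f$ is an $f$-invariant probability measure and hence equals $\mu_f$ by uniqueness. This works uniformly for linear and nonlinear $g$ and requires no comparison of Julia sets.
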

To prove Lemma~\ref{lem:converseKronecker}, we first need a lemma on dynamical systems.

\begin{lem}\label{lem:invariance}
	Let $g \in \C[z]$ be any polynomial commuting with a power of  $f$ (including the case of $g$ linear).  Then $g$ sends the Julia set $\J_f$ to itself. Furthermore, $g$ preserves the invariant measure $\mu_f$.   Explicitly, this means that for any test function $\phi$, $\int_{\J_f} \phi(z) d \mu_f = \int_{\J_f} \phi(g(z)) d \mu_f$. 
\end{lem}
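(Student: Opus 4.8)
The plan is to prove Lemma~\ref{lem:invariance} in two steps, treating the statement about the Julia set first and then deducing invariance of $\mu_f$ as a consequence.

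\textbf{Step 1: $g(\J_f) \subseteq \J_f$.} Suppose $g$ commutes with $f$, i.e. $g \circ f = f \circ g$. First I would show that $g$ preserves the filled Julia set $\K_f$ and its complement. If $z \in \K_f$, then the orbit $\{f^n(z)\}_{n \ge 0}$ is bounded; since $f^n(g(z)) = g(f^n(z))$ and $g$ is continuous, the set $\{f^n(g(z))\} = g(\{f^n(z)\})$ is the continuous image of a bounded set, hence bounded, so $g(z) \in \K_f$. Conversely, if $z \notin \K_f$ then $f^n(z) \to \infty$, and since $g$ is a nonconstant polynomial, $g(f^n(z)) \to \infty$, i.e. $f^n(g(z)) \to \infty$, so $g(z) \notin \K_f$. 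Thus $g^{-1}(\K_f) = \K_f$ (as sets), and because $g$ is a polynomial it is continuous, open, and surjective onto $\C$, so it also maps the topological boundary $\partial \K_f = \J_f$ into $\partial \K_f = \J_f$. (Alternatively, and perhaps more cleanly, one can invoke the characterization $g_f(z,\infty) = \lim d^{-n}\log^+|f^n(z)|$ from the excerpt: commutation gives $g_f(g(z),\infty) = g_f(z,\infty)$ up to the scaling introduced by $\deg g$, and in particular $g_f(g(z),\infty) = 0 \iff g_f(z,\infty) = 0$, which re-proves $g^{-1}(\K_f) = \K_f$.)

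\textbf{Step 2: $g$ preserves $\mu_f$.} The cleanest route is uniqueness of the equilibrium measure. Consider the push-forward measure $\nu := g_* \mu_f$, which by Step~1 is a Borel probability measure supported on $\J_f$. I would argue that $\nu$ is $f$-invariant: for any Borel set $B$, using $g \circ f = f \circ g$ (hence $g^{-1} \circ f^{-1} = f^{-1} \circ g^{-1}$) and $f_*\mu_f = \mu_f$,
\[
(f_*\nu)(B) = \nu(f^{-1}(B)) = \mu_f(g^{-1}(f^{-1}(B))) = \mu_f(f^{-1}(g^{-1}(B))) = (f_*\mu_f)(g^{-1}(B)) = \mu_f(g^{-1}(B)) = \nu(B).
\]
Since $\mu_f$ is the \emph{unique} $f$-invariant probability measure of maximal entropy supported on $\J_f$ (Brolin--Lyubich--Freire--Lopes--Mañé, as cited in the excerpt), to conclude $\nu = \mu_f$ it suffices to check that $\nu$ has the right entropy, or to use the sharper characterization from \cite[Theorem 6.5.8]{Ransford} that $\mu_f$ is the weak-$*$ limit of $d^{-n}\sum_{w \in f^{-n}(z_0)}\delta_w$ for (all but one) $z_0$, a property manifestly preserved by pushing forward under $g$ since $g$ permutes the $f$-preimages compatibly. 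Either way, $\nu = \mu_f$, and unwinding the definition of push-forward gives $\int_{\J_f}\phi(g(z))\,d\mu_f(z) = \int_{\J_f}\phi\,d\nu = \int_{\J_f}\phi\,d\mu_f$ for every test function $\phi$, which is the asserted identity.

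\textbf{Main obstacle.} The subtle point is Step~2: one must be careful that $g$ need not be invertible (e.g. $g = \tilde f^m$ has degree $d^{km} > 1$), so ``$g$ preserves $\mu_f$'' cannot mean $g_*\mu_f = \mu_f$ in the naive pull-back sense used for automorphisms — it must be read as the stated integral identity $\int \phi \circ g\, d\mu_f = \int \phi\, d\mu_f$, equivalently $g_*\mu_f = \mu_f$ as a push-forward. Getting the uniqueness input exactly right (maximal-entropy measure, or the equidistribution-of-preimages characterization) is where the real content lies; the Julia-set statement in Step~1 is essentially formal once one has the Green's-function description of $\K_f$ from Section~\ref{sec:Ransford-is-our-friend}.
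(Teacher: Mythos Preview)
Your Step~2 is precisely the paper's argument: push $\mu_f$ forward by $g$, use $g\circ f=f\circ g$ to see that $g_*\mu_f$ is again an $f$-invariant probability measure, and conclude $g_*\mu_f=\mu_f$ by uniqueness. The only real difference is the order. The paper runs Step~2 \emph{first} and then obtains your Step~1 in one line: since $\J_f=\operatorname{supp}\mu_f$, the equality $g_*\mu_f=\mu_f$ already forces $g(\J_f)\subseteq\J_f$. Your direct orbit-boundedness argument for Step~1 is correct but is work the paper sidesteps. Your caveat about uniqueness (mere $f_*$-invariance does not single out $\mu_f$, so one really needs the Brolin--Lyubich--FLM characterization) is well observed; the paper simply writes ``by uniqueness of the invariant measure'' without further comment, so on that point you are being more scrupulous than the source.
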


\begin{proof}
   Since $f$ and $f^n$ have the same Julia set and invariant measure, we can replace $f$ with $f^n$ to reduce to the case where $g$ commutes with $f$. 
   
   Since $g$ commutes with $f$, the measure $g_* \mu_f$ on $\C$ defined by $g_* \mu_f (X) = \mu_f(g^{-1}(X))$ is also invariant under $f$ and has total measure $1$.  By uniqueness of the invariant measure, we have $g_* \mu_f = \mu_f$ as desired.  Since $\J_f$ is the support of $\mu_f$, it too is preserved by $g$.  
\end{proof}

\begin{proof}[Proof of Lemma~$\ref{lem:converseKronecker}$] 

We apply the invariance of measure in Lemma~\ref{lem:invariance} to both variables in this integral, and obtain
\[
\int _{\J_f} \int _{\J_f} \log |\tilde{f}^n(z_1)-L(\tilde{f}^m(z_2))| d\mu_f(z_1) d \mu_f(z_2)
= \int _{\J_f} \int _{\J_f} \log |z_1-z_2| d\mu_f(z_1) d \mu_f(z_2),
\]
which we know to be $0$.
\end{proof}

\begin{remk} We will see in Section \ref{sec:Lf}, Proposition \ref{prop:ftildeL} that for $f \in \Z[z]$ both $L$ and $\tilde{f}$ have algebraic coefficients. 
\end{remk}

 As a corollary, we obtain a result proving part of Theorem~\ref{thm:mainresult}.
 \begin{cor} \label{coro:zeromeasure}
 	Let $f \in \Z[z]$ be monic of degree $d\geq 2$, and let $P \in \Z[x, y]$ be nonzero. Suppose there exists a product of complex polynomials $F_j$ such that
		\begin{enumerate}
		\item each $F_j$ has the form $\tilde{f}^{n}(x) - L(\tilde{f}^{m}(y))$, where $L$ is a linear polynomial commuting with an iterate of $f$ and $\tilde{f}$ is a nonlinear polynomial of minimal degree commuting with an iterate of $f$ (with possibly different choices of $L$, $\tilde{f}$, $n$, and $m$ for each $j$);
		\item $\prod F_j \in \Z[x, y]$; and
		\item $P$ divides $\prod F_j$ in $\Z[x, y]$.
		\end{enumerate}
	Then $\m_f(P) = 0$.
 \end{cor}
 
 \begin{proof}
 	We have \[
 \prod_{j}F_j(x,y) = P(x, y) Q(x, y)\] for some $Q \in \Z[x, y]$.  Taking Mahler measures of both sides, we get:
 \[
 0 = \m_f(P) + \m_f(Q)
 \]
 and both summands are nonnegative (since $P$ and $Q$ have integer coefficients), so we must have that $\m_f(P) = \m_f(Q) = 0$.
 \end{proof}

 \section{Two-variable Dynamical Kronecker} \label{sec:2-Kronecker}
 
 Our goal in this section is  to finish the proof of our main result, Theorem \ref{thm:mainresult}. More precisely, in Corollary \ref{coro:zeromeasure} we proved that certain polynomials have dynamical Mahler measure zero. It is now time to prove that those are the only polynomials that satisfy this condition. In other words we will prove the following result. 
 
 \begin{thm} \label{thm:babymainresult}
 	Assume the Dynamical Lehmer's Conjecture $\ref{conj:dynamicalLehmer}$. 
 	
 	Let $f \in \Z[z]$ be a  monic polynomial of degree $d \ge 2$ which is not conjugate to $z^d$ or to $\pm T_d(z)$, where $T_d(z)$ is the $d$-th Chebyshev polynomial.
 	Let $P \in \Z[x, y]$ be such that  $\m_f(P) = 0$, $P$ is irreducible in $\Z[x, y]$, and $P$ uses both variables. 
    Then every irreducible factor of $P$ in $\C[x, y]$
 divides a polynomial of the following form:
 	\[	\tilde{f}^n(x)  - L(\tilde{f}^m(y)),\]
  where $m, n \ge 0$ are integers, $L$ is a linear polynomial commuting with an iterate of $f$, and $\tilde{f}$ is a nonlinear polynomial of minimal degree commuting with an iterate of $f$.

 \end{thm}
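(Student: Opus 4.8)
\medskip
\noindent\textbf{Proof strategy for Theorem~\ref{thm:babymainresult}.}
The plan is to show that $\m_f(P)=0$ forces each $\C$-irreducible component $C_0$ of the plane curve $C=\{P=0\}\subset\Aff^2$ to contain infinitely many -- hence, being a curve, a Zariski-dense set of -- points $(\alpha,\beta)$ with $\alpha$ and $\beta$ both preperiodic for $f$; such a point is precisely a preperiodic point of the split endomorphism $f\times f$ of $\Aff^2$. Once this is in hand, the theorem follows from the classification of plane curves carrying a Zariski-dense set of $f\times f$-preperiodic points. As a preliminary observation, because $P$ uses both variables and is irreducible over $\Z$, every $\C$-irreducible factor $g$ of $P$ involves both $x$ and $y$ (otherwise the product of the Galois conjugates of $g$ would be a nonunit one-variable factor of $P$ in $\Z[x,y]$), so no $C_0$ is a vertical or a horizontal line. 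Fix one such component $C_0=\{g=0\}$.

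\emph{From Mahler measure to preperiodic points.}
For every $n\ge0$ the specialization $P(x,f^{n}(x))$ lies in $\Z[x]$; it vanishes identically for at most one value of $n$ (namely if $P=\pm(y-f^{n}(x))$, in which case $P$ is already a graph and the argument below applies to it with that one $n$ omitted), and otherwise it is a nonzero integer polynomial, so $\m_f(P(x,f^{n}(x)))\ge0$ by positivity of the dynamical Mahler measure. The Weak Dynamical Boyd--Lawton inequality (Proposition~\ref{prop:weakdynamicalBL}) gives $\limsup_{n\to\infty}\m_f(P(x,f^{n}(x)))\le\m_f(P)=0$, so $\m_f(P(x,f^{n}(x)))\to0$, and the Dynamical Lehmer Conjecture~\ref{conj:dynamicalLehmer} then forces $\m_f(P(x,f^{n}(x)))=0$ for all large $n$. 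By the Dynamical Kronecker Lemma~\ref{lem:dynamicalKronecker}, for all large $n$ every root of $P(x,f^{n}(x))$ is a preperiodic point of $f$; since $g\mid P$, the roots of $g(x,f^{n}(x))$ form a subset of these and are therefore preperiodic as well.

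\emph{Harvesting points on $C_0$.}
Write $g=\sum_{j=0}^{e}c_j(x)y^{j}$ with $e=\deg_y g\ge1$ and $c_e\not\equiv0$. Since $f$ is monic the leading term of $c_e(x)f^{n}(x)^{e}$ is not cancelled and $\deg_x g(x,f^{n}(x))=\deg c_e+e\,d^{\,n}\to\infty$; hence for all large $n$ there exist roots $\alpha$ of $g(x,f^{n}(x))$, and each yields a point $(\alpha,f^{n}(\alpha))\in C_0$ whose two coordinates are preperiodic for $f$. To see that these points are not all confined to a fixed finite set, one uses that the intersection multiplicity of the fixed curve $C_0$ with the graph $\{y=f^{n}(x)\}$ at any fixed point of $\Aff^2$ stays bounded as $n\to\infty$ (a short analysis of the slope of $\{y=f^{n}(x)\}$, governed by the multiplier of the cycle into which the point falls); combined with $\deg_x g(x,f^{n}(x))\to\infty$ this rules out such a collapse, so $C_0$ contains infinitely many, hence a Zariski-dense set of, $f\times f$-preperiodic points.

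\emph{Classification.}
It remains to deduce from this density that $C_0$ is a component of some $\tilde f^{\,n}(x)-L(\tilde f^{\,m}(y))$ with $n,m\ge0$, $L$ linear and commuting with an iterate of $f$, and $\tilde f$ the minimal-degree non-linear polynomial commuting with an iterate of $f$. This is where the hypothesis that $f$ is not conjugate to $z^{d}$ or to $\pm T_d(z)$ -- i.e.\ that $f$ is disintegrated -- is essential. One route is arithmetic equidistribution: the roots of $P(x,f^{n}(x))$ form a Galois-stable set of height-zero points of growing size, which equidistributes toward $\mu_{f,v}$ at every place $v$ when read through the $x$-coordinate of $C_0$, and equally through the $y$-coordinate, so the two pullbacks of the canonical measure of $f$ to $C_0$ coincide; Ritt's theory of commuting polynomials then turns this rigidity of the two maps $C_0\to\PP^1$ into the stated algebraic relation. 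Equivalently, one may invoke the dynamical Manin--Mumford statement for the split map $f\times f$ on $(\Aff^1)^2$ together with the Medvedev--Scanlon classification of its (pre)periodic subvarieties. Applying this to each $\C$-irreducible factor of $P$ completes the proof. I expect this last step -- upgrading ``$C_0$ has infinitely many $f\times f$-preperiodic points'' to the explicit equation for $C_0$ -- to be the main obstacle: it is the one place that uses deep external input, and it is exactly where disintegratedness is needed, since power maps and Chebyshev polynomials do admit extra preperiodic curves. The distinctness bookkeeping in the harvesting step is a secondary, essentially routine, technicality.
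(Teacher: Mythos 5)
Your outline tracks the paper's proof closely: Boyd--Lawton (Proposition~\ref{prop:weakdynamicalBL}) plus Dynamical Lehmer (Conjecture~\ref{conj:dynamicalLehmer}) plus Dynamical Kronecker (Lemma~\ref{lem:dynamicalKronecker}) to get that roots of $P(x,f^n(x))$ are preperiodic; a ``degrees grow faster than multiplicities'' argument to get infinitely many preperiodic points on each $\C$-irreducible component; then the classification of preperiodic curves for $f\times f$ (the paper cites Ghioca--Nguyen--Ye, Proposition~\ref{prop:GNY}, which is exactly the dynamical Manin--Mumford / Medvedev--Scanlon input you name). The Galois-conjugacy step you gloss over is in the paper too and is indeed quick.

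The genuine gap is your ``harvesting'' step, which you describe as a secondary, essentially routine technicality resolved by ``a short analysis of the slope of $\{y=f^n(x)\}$, governed by the multiplier.'' This is where the paper actually works: it introduces the bounded-orders property, reduces via iteration/translation/composition/pushforward (Lemmas~\ref{lem:easy} and~\ref{lem:trickier}) to the case of a fixed point at the origin, and then analyzes $\ord_0(f^n(z)-g(z))$ for a Puiseux branch $g$ of $C_0$. Your slope argument works cleanly only when the multiplier $f'(0)$ is zero, or nonzero and not a root of unity: in the first case $\ord_0 f^n\to\infty$ so the order of vanishing stabilizes, and in the second the slopes $f'(0)^n$ are all distinct so at most one $n$ matches $g'(0)$. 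But in the parabolic case $f'(0)$ a root of unity (after passing to an iterate, $f'(0)=1$), the linear coefficients of $f^n$ and of $g$ all agree, and a first-order slope comparison says nothing. The paper's sublemma~\ref{sublemma} shows the first nonlinear coefficient is $nc_{i_0}$, which is injective in $n$, so the order is bounded by $i_0$; that computation, together with the bookkeeping reductions to fixed points, is the real content of Proposition~\ref{prop:preperiodicBOP}. You have inverted the priorities: the classification step you flag as the main obstacle is cited as a black box, whereas the multiplicity bound you call routine is the paper's technical contribution, and your sketch of it has a genuine hole in the parabolic case.
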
	 

In order to prove Theorem \ref{thm:babymainresult} we will need to assume the Dynamical Lehmer's Conjecture \ref{conj:dynamicalLehmer}. In addition, we will use  Proposition \ref{prop:weakdynamicalBL}.
Finally, we will also need the following unlikely intersections result due to Ghioca, Nguyen and Ye \cite{Ghioca-Nguyen-Ye-two-var}.


\begin{prop}\cite[Theorem 1.5]{Ghioca-Nguyen-Ye-two-var} \label{prop:GNY}
	Let $f \in \C[z]$ be a polynomial of degree $d \ge 2$ which is not conjugate to $z^d$ or to $\pm T_d(z)$, and let $\Phi: \mathbb{P}^1 \times \mathbb{P}^1 \to \mathbb{P}^1 \times \mathbb{P}^1$ be defined by $\Phi(x, y) = (f(x), f(y)).$  
Let $C \subset \mathbb{P}^1 \times \mathbb{P}^1$ be an irreducible curve defined over $\C$ which projects dominantly onto both coordinates.  Then $C$ contains infinitely many preperiodic points under the action of $\Phi$ if and only if $C$ is an irreducible component of the locus of an equation of the form $\tilde{f}^n(x)  = L(\tilde{f}^m(y))$, where $m, n \ge 0$ are integers, $L$ is a linear polynomial commuting with an iterate of $f$, and $\tilde{f}$ is a nonlinear polynomial of minimal degree commuting with an iterate of $f$.
\end{prop}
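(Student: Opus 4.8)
The plan is to prove the two implications separately, treating the ``if'' direction as essentially elementary and concentrating the real work on the ``only if'' direction, which is an unlikely-intersections statement established by arithmetic equidistribution followed by a rigidity (polynomial-decomposition) argument.

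\emph{The ``if'' direction.} Suppose $C$ is an irreducible component of the curve $Z\colon \tilde f^n(x)=L(\tilde f^m(y))$. Since $\tilde f$ and $L$ commute with iterates of $f$, each preserves $\operatorname{PrePer}(f)$, and preimages under $\tilde f^n$ of preperiodic points are again preperiodic (a point is preperiodic for $f$ iff its canonical height is $0$, and $\hat h_f$ is multiplied by a power of $\deg\tilde f$ under $\tilde f$). Hence the set $S=\{(a,b): b\in\operatorname{PrePer}(f),\ \tilde f^n(a)=L(\tilde f^m(b))\}$ consists of points that are preperiodic in each coordinate, i.e.\ $\Phi$-preperiodic. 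Because $\operatorname{PrePer}(f)$ is Zariski dense in $\PP^1$ and $S$ fibers over it with fibers of constant size $\deg\tilde f^n$ away from the finitely many branch values, $S$ is Zariski dense in $Z$ and so meets every irreducible component — in particular $C$ — in an infinite set.

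\emph{Reduction and equidistribution.} For the converse, assume $C$ carries infinitely many $\Phi$-preperiodic points $(a_k,b_k)$. First I would reduce to $f$ and $C$ being defined over $\overline{\Q}$: the pair $(f,C)$ is defined over a finitely generated field over $\Q$, and one either specializes to a number field by a Masser--Zannier/Ghioca--Habegger--Hindry style argument (a density-one set of specializations of a non-special curve remains non-special) or runs the equidistribution argument directly over the function field using its product formula. The projections $\pi_1,\pi_2\colon C\to\PP^1$ are finite, so $\pi_i^\ast\mathcal{O}(1)$ has positive degree $\deg\pi_i$; endow it with the adelic metric pulled back from the canonical dynamical metric on $(\PP^1,\mathcal{O}(1),f)$. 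The resulting metrized line bundles are semipositive with canonical height $\hat h_f\circ\pi_i$, and each $(a_k,b_k)$, being $\Phi$-preperiodic, has height $0$ for \emph{both} of them; the $(a_k,b_k)$ form a generic sequence on the irreducible curve $C$. Yuan's arithmetic equidistribution theorem then forces the normalized curvature measures to coincide at every place. In particular, at the archimedean place,
\[
\frac{1}{\deg\pi_1}\,\pi_1^\ast\mu_f \;=\; \frac{1}{\deg\pi_2}\,\pi_2^\ast\mu_f
\]
as measures on $C(\C)$, and the analogous identity holds on the Berkovich analytification $C_v^{\mathrm{an}}$ for every finite place $v$.

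\emph{Rigidity and conclusion.} It remains to extract the stated algebraic form of $C$ from this measure identity. Passing to potentials — using $g_f=p_{\mu_f}$ from \eqref{eq:g=p} at the archimedean place and its non-archimedean analogues — the identity says $g_f\circ\pi_1$ and $g_f\circ\pi_2$ differ by a harmonic function on $C^{\mathrm{an}}_v$ at every $v$; an Arakelov/adelic argument (a global function harmonic at all but finitely many places, with controlled behaviour along $\pi_i^{-1}(\infty)$) then pins down a precise relation between $\pi_1$ and $\pi_2$ on $C$, namely that the correspondence $\pi_2\circ\pi_1^{-1}$ on $\PP^1$ is compatible with the dynamics of $f$. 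One then invokes the classification of such curves — Medvedev--Scanlon's description of dynamically special subvarieties of $\PP^1\times\PP^1$, resting on Ritt's theory of polynomial decompositions — together with the structure of the centralizer of $f$ under composition: when $f$ is \emph{not} conjugate to $z^d$ or to $\pm T_d(z)$ (the exceptional cases, where the centralizer is large), every polynomial commuting with an iterate of $f$ has the form $L\circ\tilde f^{\,k}$ for a fixed non-linear $\tilde f$ of minimal degree commuting with an iterate and a linear $L$ commuting with an iterate. Feeding this in identifies $C$ as an irreducible component of $\{\tilde f^n(x)=L(\tilde f^m(y))\}$.

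\emph{Main obstacle.} The delicate step is the last one: turning ``$\pi_1^\ast\mu_f\propto\pi_2^\ast\mu_f$ at all places'' into the algebraic relation $\tilde f^n(x)=L(\tilde f^m(y))$. This is where the hypothesis on $f$ is genuinely used and where Ritt's decomposition theory and the description of polynomial centralizers carry the argument; by comparison the equidistribution input is a standard (if heavy) black box and the reduction to $\overline{\Q}$ is routine modulo a specialization lemma.
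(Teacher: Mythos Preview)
The paper does not prove this statement at all: Proposition~\ref{prop:GNY} is quoted verbatim as \cite[Theorem~1.5]{Ghioca-Nguyen-Ye} and used as a black box in the proof of Proposition~\ref{prop:babymainresultBOP}. So there is no ``paper's own proof'' to compare against; the authors simply import the result.

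Your sketch is a reasonable outline of how the theorem is actually proved in the cited reference --- Yuan-type arithmetic equidistribution to force a measure identity between the pullbacks of $\mu_f$ under the two projections, followed by a rigidity step resting on Medvedev--Scanlon and Ritt's polynomial decomposition theory to convert that analytic identity into the algebraic form $\tilde f^n(x)=L(\tilde f^m(y))$. That said, the sketch is just that: the reduction to $\overline{\Q}$ you wave at is genuinely nontrivial (and in fact Ghioca--Nguyen--Ye devote real effort to it), and the ``Arakelov/adelic argument'' you invoke to pass from the measure identity to a dynamical compatibility of the correspondence $\pi_2\circ\pi_1^{-1}$ is precisely the hard, non-black-box part of their paper. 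So while nothing you wrote is wrong in spirit, it would not stand on its own as a proof --- which is fine, since in the present paper the authors do not attempt one either.
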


The following definition will be key to our proof:
\begin{defn}
\label{defn:BOP}	The triple $(f, P, \alpha)$ with $f \in \C[z]$,
$P \in \C[x, y]$, and $\alpha \in \C$  satisfies the bounded orders property (BOP) if the set of nonnegative integers
	\[
	\{\ord_{\alpha} P(x, f^n(x))\}
	\]
	is bounded. (In the above sequence we ignore any terms where $P(x,f^n(x))$ is the zero polynomial, which can happen for only finitely many $n$.)
	
	We say that $(f, P)$ satisfies the preperiodic bounded orders property (preperiodic BOP) if $(f, P, \alpha)$ has BOP for every $\alpha \in \C$ such that $\alpha$ is a preperiodic point of $f$.
	 \end{defn}

The following general reduction properties follow directly from the definition. We omit their proof.  
\begin{lem}\label{lem:easy} Let $f \in \C[z]$ and $P \in \C[x, y]$.
\begin{enumerate}
	\item (Translation) Let $P_{-\alpha}(x,y)=P(x+\alpha,y+\alpha)$ and $f_{-\alpha}(z) = f(z+\alpha)-\alpha$.
	If $(f_{-\alpha}, P_{-\alpha}, 0)$ has BOP, then $(f, P, \alpha)$ has BOP. 
	\item (Iteration) For $0 \le i < k$ let $P \circ (\id \times f^i)$ be given by $P \circ (\id \times f^i)(x, y) = P(x, f^i(y))$.  If $(f^k, P \circ (\id \times f^i), \alpha)$ has BOP for $0 \le i < k$, then $(f,P,\alpha)$ has BOP.
	\item (Divisibility) If $(f, P, \alpha)$ has BOP and $Q \mid P$, then $(f, Q, \alpha)$ has BOP.
\end{enumerate}	
\end{lem}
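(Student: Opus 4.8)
The plan is to prove each of the three reductions by substituting back into the original data $(f,P,\alpha)$ and using that $\ord_\alpha$ is unchanged by affine changes of variable and is additive on products. For \textbf{(1) Translation}, the point is that conjugation by the translation $\sigma(z)=z+\alpha$ respects iteration: from $f_{-\alpha}=\sigma^{-1}\circ f\circ\sigma$ (the paper's convention, with $\sigma(z)=z+\alpha$) one gets $f_{-\alpha}^n(z)=f^n(z+\alpha)-\alpha$, hence
\[
P_{-\alpha}(x, f_{-\alpha}^n(x)) = P(x+\alpha, f^n(x+\alpha)) = g_n(x+\alpha),\qquad g_n(x):=P(x,f^n(x)).
\]
So $P_{-\alpha}(x,f_{-\alpha}^n(x))$ is the zero polynomial exactly when $g_n$ is, and otherwise its order of vanishing at $0$ equals $\ord_\alpha g_n$. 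Thus the two index sets in the definition of BOP are literally equal, and boundedness of one is boundedness of the other.

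For \textbf{(2) Iteration}, write an arbitrary $n\ge 0$ as $n=km+i$ with $0\le i<k$ and $m\ge 0$; since $f^n=f^i\circ (f^k)^m$ and $\big(P\circ(\id\times f^i)\big)(x,y)=P(x,f^i(y))$, we have
\[
P(x,f^n(x)) = \big(P\circ(\id\times f^i)\big)\big(x,(f^k)^m(x)\big).
\]
By hypothesis, for each $i\in\{0,\dots,k-1\}$ the set of values $\ord_\alpha$ of the right-hand side over $m\ge 0$ (ignoring zero terms) is bounded, say by $B_i$; then $\max_i B_i$ bounds $\{\ord_\alpha P(x,f^n(x))\}$, since every $n$ lies in exactly one of the $k$ residue classes modulo $k$. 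The vanishing of $P(x,f^n(x))$ corresponds term by term to vanishing on the right, so the excluded sets match as well.

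For \textbf{(3) Divisibility}, factor $P=QR$ in $\C[x,y]$. Whenever $P(x,f^n(x))\ne 0$, both $Q(x,f^n(x))$ and $R(x,f^n(x))$ are nonzero polynomials in $x$, so by additivity and nonnegativity of $\ord_\alpha$ on polynomials, $\ord_\alpha Q(x,f^n(x))\le\ord_\alpha P(x,f^n(x))$; hence BOP for $(f,P,\alpha)$ controls $Q$ on this set of $n$. The only remaining $n$ to consider are those with $Q(x,f^n(x))\ne 0$ but $P(x,f^n(x))\equiv 0$, and these are finite in number — as already noted in Definition \ref{defn:BOP}, $P(x,f^n(x))=0$ for only finitely many $n$ (the graphs $\{y=f^n(x)\}$ for distinct $n$ are distinct irreducible curves, so only finitely many can be components of the zero locus of $P$) — so they contribute only finitely many finite orders and cannot affect boundedness. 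I expect the only real care needed to be this bookkeeping of the zero-polynomial exceptions in (3), together with the (purely notational) point of keeping the composition order straight in $f^n=f^i\circ(f^k)^m$; everything else is a direct unwinding of the definition of BOP.
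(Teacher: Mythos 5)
Your proof is correct. The paper explicitly omits the proof of Lemma \ref{lem:easy}, describing its parts as ``general reduction properties [that] follow directly from the definition,'' and your argument is precisely the direct unwinding one would expect: in (1) you verify the equality $P_{-\alpha}(x,f_{-\alpha}^n(x))=g_n(x+\alpha)$ and the invariance of $\ord$ under translation; in (2) you partition by residue class mod $k$ and use $f^{km+i}=f^i\circ(f^k)^m$; and in (3) you use additivity and nonnegativity of $\ord_\alpha$ together with the finiteness of the exceptional set already noted in Definition \ref{defn:BOP}. Nothing is missing.
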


Now we consider some less immediate reduction properties.
\begin{lem}[Composition/Pushforward reduction] \label{lem:trickier}	Suppose that $f \in \C[z]$ and $P \in \C[x, y]$. 
\begin{enumerate}
\setcounter{enumi}{3}
\item     (Composition) Let $f^* P = P \circ  (f \times f)$ be the polynomial given by $f^* P = P(f(x), f(y))$. 
    If $(f, P, f(\alpha))$ has BOP, then $(f, f^*P, \alpha)$ has BOP.
	
\item 	(Pushforward) For  $P(x,y)$ irreducible in $\C[x, y]$, we define $f_*P(x, y)$ to be an irreducible polynomial in $\C[x, y]$ which vanishes on the curve $\{(f(x), f(y)) \, :\, P(x, y) = 0\}$ ($f_*P(x, y)$ is defined up to multiplication by a non-zero scalar\footnote{The fact that the pushforward is only defined up to multiplication by a scalar is not significant, since we only use this construction in the context of order of vanishing.}).  More generally, we can extend this definition multiplicatively: for any $P(x, y) \in \C[x, y]$ with irreducible factorization $P = \prod_{j} F_j$, define $f_* P = \prod_{j} f_* F_j$.

	Then for any $P$ in $\C[x, y]$, if $(f, f_*P, f(\alpha))$ has BOP, $(f, P, \alpha)$, also has BOP.
\end{enumerate}
	
	\end{lem}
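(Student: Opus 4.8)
The plan is to prove both parts from a single observation: since $f^{n+1}(x) = f^n(f(x))$, one has
\[
(f^*P)(x, f^n(x)) = P\bigl(f(x),\, f^{n+1}(x)\bigr) = g_n(f(x)), \qquad \text{where } g_n(w) := P(w, f^n(w)),
\]
so $(f^*P)(x, f^n(x))$ is obtained from $P(x, f^n(x))$ (which is $g_n$ with the variable renamed) simply by precomposing with $f$. In particular it vanishes identically exactly when $g_n \equiv 0$, i.e.\ exactly when $P(x,f^n(x)) \equiv 0$, so the two sequences appearing in the relevant BOP conditions are indexed over the same admissible set of $n$, and everything reduces to the behaviour of $\ord_\alpha$ under precomposition with $f$.

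For part (4) I would fix $\alpha$, put $\beta = f(\alpha)$, and let $e = \ord_\alpha\bigl(f(x)-\beta\bigr)$ be the local ramification index of $f$ at $\alpha$, a fixed integer with $1 \le e \le d$. Factoring $g_n(w) = (w-\beta)^{k_n} h_n(w)$ with $h_n(\beta) \neq 0$, so that $k_n = \ord_\beta g_n = \ord_{f(\alpha)} P(x,f^n(x))$, and substituting $w = f(x)$ (noting $h_n(f(\alpha)) \neq 0$, so $h_n(f(x))$ does not vanish at $\alpha$) gives $\ord_\alpha (f^*P)(x,f^n(x)) = e\,k_n = e\cdot\ord_{f(\alpha)} P(x,f^n(x))$. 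Since $e$ depends only on $f$ and $\alpha$, boundedness of $\{\ord_{f(\alpha)} P(x,f^n(x))\}$ at once yields boundedness of $\{\ord_\alpha (f^*P)(x,f^n(x))\}$, which is the claim.

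For part (5) the strategy is to reduce to part (4) together with the Divisibility reduction (Lemma~\ref{lem:easy}(3)) via the divisibility $P \mid f^*(f_*P)$ in $\C[x,y]$: granting it, if $(f, f_*P, f(\alpha))$ has BOP then part (4) applied with $f_*P$ in place of $P$ shows $(f, f^*(f_*P), \alpha)$ has BOP, and then Lemma~\ref{lem:easy}(3) gives that $(f, P, \alpha)$ has BOP. To prove the divisibility, write $\Phi(x,y) = (f(x), f(y))$ and let $V(\cdot)$ denote the zero locus. For $P$ irreducible, the defining property of $f_*P$ gives $\Phi(V(P)) \subseteq V(f_*P)$, hence $V(P) \subseteq \Phi^{-1}(V(f_*P)) = V\bigl(f^*(f_*P)\bigr)$; since an irreducible plane curve contained in the zero locus of a nonzero polynomial must be one of its components, $P \mid f^*(f_*P)$. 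For general $P = \prod_j F_j$ (irreducible factorization with multiplicity) one has $f_*P = \prod_j f_*F_j$ and $f^*(f_*P) = \prod_j f^*(f_*F_j)$, and multiplying the divisibilities $F_j \mid f^*(f_*F_j)$ term by term recovers $P \mid f^*(f_*P)$.

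I expect the only step needing genuine care to be this last divisibility: one must keep track of the facts that $f_*$ is defined only up to a nonzero scalar and extended multiplicatively, that $\Phi(V(P))$ is merely dense in $V(f_*P)$ rather than equal to it, and that single-variable irreducible factors of $P$ (such as $x-c$) still behave correctly. None of this obstructs the set-theoretic containment, and over $\C$ the passage from containment of zero sets to divisibility of the polynomials is routine. The remaining content --- the order-of-vanishing count in part (4) --- is elementary.
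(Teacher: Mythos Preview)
Your proposal is correct and follows essentially the same route as the paper: for (4) you compute $\ord_\alpha (f^*P)(x,f^n(x)) = e\cdot \ord_{f(\alpha)} P(x,f^n(x))$ via the substitution $w=f(x)$, exactly as the paper does; for (5) you reduce to (4) plus the divisibility $P \mid f^*(f_*P)$, which the paper obtains by the Nullstellensatz in the same way. The only cosmetic difference is that the paper reduces to irreducible $P$ at the outset (using that $f_*F_j \mid f_*P$ and Lemma~\ref{lem:easy}(3)), whereas you keep $P$ general and multiply the factor-by-factor divisibilities; both are fine.
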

\begin{proof}
	(4) Composition: For $Q = f^* P$ and $Q_n(z) = Q(z, f^n(z))$, 
		\begin{align*}
	\ord_{z = \alpha} Q_n(z) &= \ord_{z = \alpha} P(f(z), f^{n+1}(z)) \\
	&= \ord_{z = \alpha} P_n(f(z)) \\
	&= \ord_{z = \alpha} (f(z) - f(\alpha)) \cdot \ord_{z' = f(\alpha)} P_n(z').
	\end{align*}
	Hence the sequence $\ord_{z = \alpha} Q_n(z)$ is bounded if $\ord_{z' = f(\alpha)} P_n(z')$ is.
	
	(5) Pushforward: Assume $(f, f_*P,f (\alpha))$ has BOP.  Since BOP is preserved by taking products, we can without loss of generality assume that $P$ is irreducible.  By the composition property, $(f, f^*f_*P, \alpha)$  also has BOP. However, by construction, the two-variable polynomial $f^*f_*P$  vanishes at any $(x, y)$ with $P(x, y) = 0$, and since $P$ is irreducible, 
	we must have $P \mid f^*f_*P
$ by the Nullstellensatz.  Hence $(f, P, \alpha)$ has BOP by divisibility.
\end{proof}

The motivation for the definition of BOP is the following result. 
\begin{prop} \label{prop:babymainresultBOP}
	Theorem~$\ref{thm:babymainresult}$ holds under the extra hypothesis that $(f, P)$ satisfies preperiodic BOP.
	\end{prop}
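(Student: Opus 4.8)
The plan is to combine the Weak Dynamical Boyd--Lawton inequality (Proposition~\ref{prop:weakdynamicalBL}) with the Dynamical Lehmer Conjecture to force each one-variable specialization $P_n(x) := P(x, f^n(x))$ to have dynamical Mahler measure zero for $n$ large, then feed this into Dynamical Kronecker's Lemma and finally into the Ghioca--Nguyen--Ye unlikely intersections result (Proposition~\ref{prop:GNY}). First I would observe that since $\m_f(P) = 0$, Proposition~\ref{prop:weakdynamicalBL} gives $\limsup_{n\to\infty}\m_f(P_n) \le 0$; since $P_n \in \Z[x]$ has $\m_f(P_n) \ge 0$ (by the positivity result of Section~\ref{sec:well-defined}, applicable because $P$ uses both variables so $P_n$ is a genuine nonzero polynomial for all large $n$), we get $\m_f(P_n) \to 0$. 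The Dynamical Lehmer Conjecture then forces $\m_f(P_n) = 0$ for all $n \gg 0$, and Dynamical Kronecker's Lemma (Lemma~\ref{lem:dynamicalKronecker}) says that every root of such a $P_n$ is a preperiodic point of $f$.

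Next I would translate this into a statement about the curve $C = \{P = 0\} \subset \PP^1 \times \PP^1$ and the map $\Phi(x,y) = (f(x), f(y))$. Fix a large $n$ as above. A root $\alpha$ of $P_n$ is a point with $P(\alpha, f^n(\alpha)) = 0$, i.e. $(\alpha, f^n(\alpha)) \in C$, and by the previous paragraph $\alpha \in \mathrm{PrePer}(f)$; hence also $f^n(\alpha) \in \mathrm{PrePer}(f)$, so $(\alpha, f^n(\alpha))$ is a preperiodic point of $\Phi$ lying on $C$. This is where the BOP hypothesis enters: I need to know that as $\alpha$ ranges over roots of $P_n$ (and $n$ over the cofinite set of good indices), we obtain \emph{infinitely many distinct} points on $C$. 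The bounded orders property ensures exactly this — if only finitely many preperiodic points of $f$ arose this way, then each $P_n$ would be supported (as a polynomial in $x$) on this finite set, and a fixed-degree polynomial vanishing only at finitely many prescribed preperiodic points $\alpha$ must, as $n\to\infty$ and the multiplicities are forced up, violate the boundedness of $\{\ord_\alpha P_n\}$ over preperiodic $\alpha$; so under preperiodic BOP there are infinitely many preperiodic points of $\Phi$ on $C$. (Here one uses that $P$ is irreducible over $\Z$ but may factor over $\C$; one applies the argument to an irreducible $\C$-component, and that component still projects dominantly onto both coordinates precisely because $P$ uses both variables and is $\Z$-irreducible, so the Galois-conjugate components are permuted and each is ``new'' enough.)

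Then Proposition~\ref{prop:GNY} applies: each irreducible $\C$-component $C'$ of $C$ projecting dominantly onto both coordinates and containing infinitely many $\Phi$-preperiodic points must be a component of the zero locus of $\tilde f^n(x) = L(\tilde f^m(y))$ for suitable $L$, $\tilde f$, $m$, $n$, which is exactly the conclusion of Theorem~\ref{thm:babymainresult} — the component divides $\tilde f^n(x) - L(\tilde f^m(y))$. The degenerate case where some $\C$-component does not project dominantly onto both coordinates would mean that component is a ``vertical'' or ``horizontal'' line $x = c$ or $y = c$; but $\Z$-irreducibility of $P$ together with the hypothesis that $P$ uses both variables rules this out (such a factor could not be a $\Z$-irreducible two-variable polynomial using both variables), so every $\C$-component is bi-dominant and the argument is uniform.

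The main obstacle I expect is the bookkeeping in the middle paragraph: making precise why the BOP hypothesis converts ``$P_n$ has all roots preperiodic for all large $n$'' into ``$C$ carries infinitely many $\Phi$-preperiodic points.'' The subtlety is that a priori all the roots of all the $P_n$ could repeat from a fixed finite pool of preperiodic points; ruling this out is precisely the content of the bounded-order condition, and one must check the contrapositive carefully — if the pool is finite, then for each fixed $\alpha$ in the pool the multiplicity $\ord_\alpha P_n$ is forced to grow (since $\deg P_n$ grows like $d^n \deg_y P$ while the number of available roots is fixed), contradicting BOP at that preperiodic $\alpha$. Getting the quantitative comparison of degrees versus available roots clean, and handling the finitely many $n$ where $P_n \equiv 0$, is the fiddly part; everything else is an assembly of the quoted results.
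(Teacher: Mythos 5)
Your plan is correct and matches the paper's proof essentially step for step: Weak Dynamical Boyd--Lawton together with the Dynamical Lehmer Conjecture force $\m_f(P_n)=0$ for large $n$, Dynamical Kronecker makes all roots of those $P_n$ preperiodic, preperiodic BOP plus $\deg P_n \to \infty$ forces the root set $S(P)$ to be infinite, Galois symmetry of the $\C$-irreducible factors of the $\Z$-irreducible $P$ propagates this infinitude to each factor $Q$, and GNY (with dominance supplied by $P$ using both variables) finishes. The only places where you gesture rather than argue — the ``each is new enough'' phrase, which stands in for the explicit observation that $S(P)=\bigcup_Q S(Q)$ and the conjugate sets $S(Q)$ all have equal cardinality — are exactly the parts the paper spells out, so there is no gap in the underlying idea.
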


\begin{proof}
	Suppose that $P \in \Z[x, y]$ is irreducible and $\m_f(P) = 0$. We consider the family of polynomials $P_n(x) = P(x, f^n(x))$.  
	
	By Weak Dynamical Boyd--Lawton (Proposition \ref{prop:weakdynamicalBL}), we have that $\limsup_{n \to \infty} \m_f(P_n(x)) = 0$. We now apply the Dynamical Lehmer's Conjecture \ref{conj:dynamicalLehmer} to obtain that $\m_f(P_n(x)) = 0$ for $n \gg 0$. Then if $\alpha$ is any root of $P_n(x)$, then, by the single variable Kronecker's Lemma \ref{lem:dynamicalKronecker}, $\alpha$ is preperiodic for $f$.
	
	We now argue that the set $S(P) = \{\alpha \, :\, P_n(\alpha) = 0 \text{ for some }n\}$ is infinite.  Indeed,
	\[
	\deg (P_n) = \sum_{\alpha \in S(P)} \ord_\alpha(P_n(z))
	\]
	goes to $\infty$ as $n \to \infty$, but preperiodic BOP for $(f, P)$ says that the individual summands are bounded, so the sum must be infinite.
	Notice that $S(P)$ is the union of sets $S(Q)$, as $Q$ ranges through the irreducible factors of $P$ in $\C[x,y]$,  and at least one of these sets must be infinite.  However, because $P$ is irreducible in $\Z[x, y]$, the irreducible factors of $P$ form a set of Galois conjugates, and so the sets $S(Q)$ all have the same cardinality, and are all infinite. Hence any irreducible factor $Q$ of $P$ has the property that the set $S(Q) = \{\alpha \, :\, Q_n(\alpha) = 0 \text{ for some }n\}$ is infinite.

	We are now in a position to apply  Proposition \ref{prop:GNY} to $Q$.  Indeed, if $\alpha$ is a root of some $Q_n$, then the curve $Q(x, y) = 0$ passes through the point $(\alpha, f^n(\alpha))$, whose coordinates are both preperiodic for $f$.  Also, the dominance condition in Proposition \ref{prop:GNY} follows from the fact that $P$ uses both variables $x$ and $y$.
	
	Thus, we conclude that each complex factor $Q(x, y)$ of $P(x,y)$ divides a polynomial of the form $\tilde{f}^n(x)  - L(\tilde{f}^m(y))$. 
\end{proof}

It now remains to show the following result. 
\begin{prop}\label{prop:preperiodicBOP} If $f \in \C[z]$ of degree $d\geq 2$
 and $P \in \C[x, y]$ is such that every irreducible factor involves both variables, then $(f, P)$ satisfies preperiodic BOP.
\end{prop}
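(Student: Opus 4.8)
The plan is to whittle the general case down, via the reduction lemmas, to the case where $\alpha$ is a fixed point of $f$ sitting at the origin, and then analyze the curve $P=0$ locally at that point by Puiseux series.

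\emph{Reductions.} First I would use that BOP is multiplicative: $\ord_\alpha P(x,f^n(x))=\sum_j\ord_\alpha F_j(x,f^n(x))$ whenever $P=\prod_j F_j$ and $P(x,f^n(x))\not\equiv 0$, so it is enough to treat each irreducible factor $F_j$, and each uses both variables by hypothesis. So assume $P$ is irreducible and uses both variables; then also $P(x,0)\not\equiv 0$ and $P(0,y)\not\equiv 0$ (otherwise $P=x$ or $P=y$). Given a preperiodic $\alpha$, apply the pushforward reduction (Lemma~\ref{lem:trickier}(5)) as often as needed to reach a periodic point, using that $f_*$ preserves the property ``irreducible and using both variables'' (the image of a curve dominating both coordinate projections under $(x,y)\mapsto(f(x),f(y))$ is again such a curve). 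Next, if the period is $p$, the iteration reduction (Lemma~\ref{lem:easy}(2)) with $k=p$ replaces the pair $(f,P)$ by the pairs $(f^p,\,P(x,f^i(y)))$, $0\le i<p$ — still with every irreducible factor using both variables, re-factoring if necessary — and now $\alpha$ is a fixed point; the translation reduction (Lemma~\ref{lem:easy}(1)) moves it to $0$. Finally, if $\lambda:=f'(0)$ is a root of unity of order $q$, one more iteration reduction with $k=q$ makes the multiplier equal to $1$. The upshot is that it suffices to prove BOP for $(f,P,0)$ when $f(0)=0$, $\lambda$ is either $1$ or not a root of unity, and $P$ is irreducible and uses both variables.

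\emph{Local analysis at the fixed point.} If $P(0,0)\ne 0$, then $\ord_0 P(x,f^n(x))=0$ for every $n$; if $P=c\,(y-f^{n_0}(x))$, then for $n\ne n_0$ it equals $\ord_0\!\bigl(f^n(x)-f^{n_0}(x)\bigr)$, which is bounded in each of the three cases for $\lambda$. In the remaining case $(0,0)\in C:=\{P=0\}$ and $P(x,f^n(x))\not\equiv 0$ for all $n$. I would decompose the germ of $C$ at the origin into its finitely many analytic branches $\gamma_1,\dots,\gamma_t$ (independent of $n$); since $P$ uses both variables, no branch lies in a coordinate axis, so $\gamma_i$ has a Puiseux parametrization $(\phi_i(s),\psi_i(s))$ with $\phi_i,\psi_i\in s\,\C[[s]]\setminus\{0\}$. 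Because $\Gamma_n:=\{y=f^n(x)\}$ is smooth through the origin,
\[
\ord_0 P(x,f^n(x))=I_0(C,\Gamma_n)=\sum_{i=1}^t\ord_s\!\bigl(\psi_i(s)-f^n(\phi_i(s))\bigr),
\]
so it suffices to bound each summand over $n$. Writing $\ell=\ord_0 f$, one has $\ord_s\!\bigl(f^n(\phi_i(s))\bigr)=\ell^n\ord_s\phi_i$. If $\ell\ge 2$ this tends to $\infty$, so the summand equals $\min(\ord_s\psi_i,\ell^n\ord_s\phi_i)\le\ord_s\psi_i$ except for at most one $n$, where it is still finite since $C$ does not contain $\Gamma_n$. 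If $\ell=1$ then $\lambda\ne 0$; here the summand is $\ord_s\phi_i$ or $\ord_s\psi_i$ when these differ, while if they are equal the coefficient of $s^{\ord_s\phi_i}$ in $\psi_i-f^n\!\circ\phi_i$ is $\operatorname{lc}(\psi_i)-\lambda^n\operatorname{lc}(\phi_i)$, which fails to vanish for all but at most one $n$ when $\lambda$ is not a root of unity, and when $\lambda=1$ one subtracts off $\phi_i$ and invokes the parabolic expansion $f^n(z)=z+n\,a_r z^r+\cdots$ (with $r\ge 2$), so that the next obstructing coefficient is an affine, hence at most once vanishing, function of $n$. Either way each $I_0(\gamma_i,\Gamma_n)$ is bounded in $n$, and summing over the finitely many branches yields BOP.

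\emph{Where the difficulty lies.} I expect the parabolic local analysis ($f'(0)=1$) to be the crux: there the first-order comparison between a branch of $C$ and the graph $y=f^n(x)$ can degenerate for every $n$ simultaneously, and the observation that rescues a uniform bound is that the $n$-dependence of the relevant higher coefficient, coming from $f^n(z)=z+n\,a_r z^r+\cdots$, is affine in $n$. The remaining work is bookkeeping — chiefly verifying that ``every irreducible factor uses both variables'' survives the pushforward $f_*$ and the substitutions $y\mapsto f^i(y)$, so that the reductions apply.
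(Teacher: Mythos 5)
Your proof is correct and follows essentially the same route as the paper: reduce via translation, iteration, and pushforward to the case of a fixed point at the origin with multiplier $\lambda=f'(0)$ either zero, equal to $1$, or not a root of unity; then expand along Puiseux branches of $\{P=0\}$ at the origin and show that for some fixed exponent the $n$-dependent coefficient (a power of $\lambda$, or an affine function of $n$ in the parabolic case) can match the branch at most once. The only cosmetic difference is that you parametrize the branches by an auxiliary uniformizer $s$ and phrase the bound as an intersection-multiplicity computation $I_0(\gamma_i,\Gamma_n)=\ord_s(\psi_i-f^n\circ\phi_i)$, whereas the paper writes $P(x,y)=h(x)\prod_j(y-g_j(x))$ with $g_j$ a Newton--Puiseux root in $x$ and bounds $\ord_0(f^n(z)-g_j(z))$ directly, disposing of fractional or negative exponents in $g_j$ by the observation that $f^n$ is a polynomial and hence cannot match them. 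Your branch formulation sidesteps fractional exponents automatically at the cost of a slightly heavier setup; both handle the degenerate cases ($P(0,0)\neq 0$, $P$ equal to a graph $y-f^{n_0}(x)$, branches on the diagonal) without difficulty, and the key sublemma — that $[z^1]f^n(z)=\lambda^n$ when $\lambda$ is not a root of unity, and $[z^{i_0}]f^n(z)=n\,c_{i_0}$ when $\lambda=1$ — is the same in substance as your leading-coefficient argument.
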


\begin{proof}
	We must show that if $f$ and $P$ satisfy the above conditions, then for any preperiodic point $\alpha$ of $f$, the triple $(f, P, \alpha)$ satisfies BOP. Our strategy will be to show this in the case where $\alpha$ is a fixed point of $f$, and so we first show that the general case can be reduced to this case.

Assume that we have proved that all such triples $(g, Q, \beta)$ with $\beta$ a fixed point of $g$ have BOP. Take a triple $(f, P, \alpha)$ with $\alpha$ preperiodic for $f$.
Then there exists an $N$ such that $f^{2N}(\alpha) = f^N(\alpha)$, so that $f^N(\alpha)$ is a fixed point for $f^N$. 
By hypothesis, $(f^N, f^N_\ast (P \circ (\id \times f^i)), f^N(\alpha))$ has BOP for each $i$ (note that $f^N_\ast (P \circ (\id \times f^i))$ does not vanish on any horizontal or vertical line because $P$ does not, so all its irreducible factors involve both variables). By pushforward, $(f^N, P\circ(\id \times f^i), \alpha)$ has BOP for each $i$, and by iteration $(f, P, \alpha)$ does also.

We have thus reduced the argument to showing that $(f, P, \alpha)$ has BOP whenever $\alpha$ is a fixed point for $f$. By translation, we can further assume that $\alpha = 0$.  Additionally, if $f'(0)$ is a root of unity of order $k > 1$, then for $g = f^k$, we have that $g'(0) = 1$.  Using the iteration reduction to replace $f$ by $g= f^k$ as necessary, we can assume that if $f'(0)$ is a root of unity, then $f'(0) = 1$.

So suppose that $f \in \C[z]$ is a monic polynomial such that $f(0) = 0$ and $f'(0)$ is either $1$ or not a root of unity, and $P \in \C[x,y]$ is such that every irreducible factor involves both variables.  We show that the sequence $\{ \ord_0 P(z, f^n(z))\}$ is bounded. 

We can factor $P(x, y) = h(x) \prod_{j} (y-g_j(x))$, where $h$ is a nonzero polynomial in $x$ and $g_j$ are non-constant Puiseux series in $x$ (formal power series with possibly fractional exponents, and also possibly a finite number of negative exponents, which are obtained by dividing out by $h$).
Then 
\[
\ord_{0}P(z, f^n(z)) = \ord_0(h(z)) + \sum_{j} \ord_0 (f^n(z) - g_j(z)).
\]
Since $\ord_0(h(z))$ is independent of $n$, it suffices to show that each summand $\ord_0 (f^n(z) - g_j(z))$ is bounded.  Note that if $g_j(z)$ has a term with a negative or non-integer exponent, since $f^n(z)$ is a polynomial, this  bounds $\ord_0(f^n(z) - g_j(z))$, so we can restrict to the case where $g_j \in \C[[z]]$. 

We first deal with the case of $f'(0) = 0$ separately. In this case, $\ord_{0} (f^n(z)) \to \infty$ as $n \to \infty$, so $\ord_{0}(f^n(z) - g(z)) = \ord_{0}(g(z))$ for sufficiently large $n$ and the sequence is bounded.


Hence we are reduced to the case $f'(0) \ne 0$, and either $f'(0)$ is not a root of unity or $f'(0) = 1$.  We will use the following notation: if $F \in \C[[z]]$ is a power series, then $[z^i]F(z)$ denotes the coefficient of $z^i$ in $F(z)$.  We have that  $\ord_{0} (f^n(z) - g(z))$ is the smallest $i$ such that $[z^i]f^n(z) \not= [z^i] g(z)$.  The claim states that the sequence of such $i$ is bounded as $n$ varies, which will follow from the following sublemma:

\begin{lem}\label{sublemma}
Let $f(z)$ be a polynomial of degree $d>1$  with $f(0) = 0$ and $f'(0) \neq 0$, and either $f'(0)$ is not a root of unity,  or $f'(0) = 1$. 
There exists some $i$ such that the set of coefficients $\{[z^i]f^n(z)\}_{n \ge 1}$ contains any number at most once. 
\end{lem}
\begin{proof}[Proof of Lemma~\ref{sublemma}]
Since $f(0) = 0$, and $f'(0) \ne 0$, we can write $f(z) = c_1 z + \text{higher order terms}$.  If $c_1$ is not a root of unity, then \[f^n(z) = c_1^n z + \text{higher order terms}\] and $i = 1$ works.

On the other hand, if $c_1 = 1$, we can write $f(z) = z + c_{i_0} z^{i_0} + \text{higher order terms}$, where $c_{i_0}$ is the first nonzero coefficient after the linear one. 
We can calculate
\[
f^n(z) = z + n c_{i_0} z^{i_0} + \text{higher order terms},
\]
so the value $i = i_0$ works.
\end{proof}

Now, for the value of $i$ given by the above lemma, there can be at most one $n$ such that $[z^i]f^n(z) = [z^i]g(z)$, 
and for all other $n$ we have  $\ord_{0} (f^n(z) - g(z)) \le i$.  Hence the values of $\ord_{0} (f^n(z) - g(z))$ 
are bounded.  This finishes the proof in the case that $\alpha$ is a fixed point of $f$, and so by our reductions, also the proof of Proposition \ref{prop:preperiodicBOP}.
\end{proof}

Combining Propositions~\ref{prop:babymainresultBOP} and \ref{prop:preperiodicBOP} gives Theorem \ref{thm:babymainresult}.  Finally, Theorem \ref{thm:mainresult} is a consequence of combining  Theorem \ref{thm:babymainresult} and Corollary \ref{coro:zeromeasure}.

 \section{A detailed study of $L$ and $\tilde{f}$} \label{sec:Lf}

For a fixed nonlinear, monic polynomial $f \in \Z[z]$ of degree $d$ which is not conjugate to a power function or a Chebyshev polynomial, Theorem~\ref{thm:mainresult} characterizes the irreducible polynomials $P \in \Z[x, y]$ with $\m_f(P) = 0$ in terms of linear polynomials $L(z)$ and nonlinear polynomials $\tilde{f}(z)$ of minimal degree commuting with an iterate of $f$.  In this section we describe the polynomials $L$ and $\tilde{f}$ which arise in this manner. Having integral coefficients plays no special role here, so we will work throughout with polynomials over $\C$.

For a general given polynomial $f(z) = \sum_{i=0}^d a_i z^i$, we define $\beta = a_{d-1} / (da_d)$. In our case $f$ is monic, so $\beta = a_{d-1} / d$. The constant $\beta$ is invariant under replacing $f$ by one of its iterates. Conjugating $f$ by $z\mapsto z-\beta$ gives a polynomial $f_\beta$ of degree $d$ whose degree-$(d-1)$ coefficient is 0. Define $j(f)$ to be the greatest common divisor of those $i - 1$ for which $a_i \neq 0$. The key result is the following, due to Boyce:

\begin{thm}\cite[Theorem~4]{Boyce}
\label{boyce4}
If $g(z)$ is a degree-$m$ polynomial that commutes with $f(z)$, then the degree-$m$ polynomials commuting with $f(z)$ are exactly those of the form $ug(z) + (u - 1)\beta$ for $u$ a $j(f_\beta)$-th root of unity.
\end{thm}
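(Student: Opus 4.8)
The plan is to conjugate into the normalized situation $\beta=0$ and then argue by comparing coefficients in the identity $f\circ g=g\circ f$. Write $f(z)=\sum_{i=0}^d a_iz^i$. First I would record the (routine) fact that conjugation by $z\mapsto z-\beta$ sends $f$ to $f_\beta$ (whose degree-$(d-1)$ coefficient vanishes), sends a degree-$m$ polynomial commuting with $f$ to one commuting with $f_\beta$, and sends $ug(z)+(u-1)\beta$ to $u$ times the conjugate of $g$. This reduces the theorem to the normalized statement: \emph{if $h(z)=\sum a_iz^i$ has $a_{d-1}=0$ and $g_0$ is a degree-$m$ polynomial commuting with $h$, then the degree-$m$ polynomials commuting with $h$ are exactly the $ug_0$ with $u^{j(h)}=1$}, where $j(h)=\gcd\{i-1:a_i\neq 0\}$.

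One inclusion is immediate: if $g$ commutes with $h$, then $h(ug(z))=\sum a_iu^ig(z)^i$ and $(ug)(h(z))=u\,h(g(z))=\sum a_iu\,g(z)^i$ coincide exactly when $a_i(u^i-u)=0$ for all $i$, i.e. when $u^{j(h)}=1$; since $a_d\neq 0$ we have $j(h)\mid d-1$, and the resulting $j(h)$ polynomials $ug_0$ are distinct. For the converse I would first show that a commuting polynomial $g(z)=\sum_{l\le m}b_lz^l$ is determined by its leading coefficient. Comparing $z^{md}$-coefficients gives $b_m^{d-1}=a_d^{m-1}$; then for $k=1,\dots,m$ the $z^{md-k}$-coefficient of $f\circ g=g\circ f$ yields an equation in which $b_{m-k}$ appears linearly with the nonzero coefficient $a_d\,d\,b_m^{d-1}$: on one side it enters only through the $g^d$ term of $h\circ g$, and on the other side it would multiply $h^{m-k}$ in $g\circ h=\sum b_jh^j$, which has degree $(m-k)d<md-k$ because $d\ge 2$, so it does not contribute. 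Thus each $b_{m-k}$ is recursively a polynomial expression in $b_m,\dots,b_{m-k+1}$ and the $a_i$; the case $k=1$ forces $b_{m-1}=0$. Consequently any two degree-$m$ commuting polynomials have leading coefficients differing by a factor $v$ with $v^{d-1}=1$, and each has vanishing $z^{m-1}$-coefficient.

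The crux — and the step I expect to be the hardest — is upgrading ``$v^{d-1}=1$'' to ``$v^{j(h)}=1$'', equivalently showing $g=vg_0$. Here I would set $\Delta=g-vg_0$, which has degree $\le m-2$ by the preceding paragraph, subtract $v$ times the commutation identity for $g_0$ from that for $g$, and expand $g=vg_0+\Delta$. This produces an identity of the form
\[
\Delta(h(z))=R(z)+\Delta(z)H(z)+E(z),
\]
where $R=\sum_i a_i(v^i-v)g_0^i$, $H=\sum_i a_i\,i\,v^{i-1}g_0^{i-1}$, and $E$ collects the terms of order $\ge 2$ in $\Delta$. If $v^{j(h)}\neq 1$ then $R\not\equiv 0$, which forces $\Delta\not\equiv 0$; but then a degree count — using $\deg\Delta\le m-2<m=\deg g_0$, $d\ge 2$, and $v^{d-1}=1$ (so that $H$ has degree $(d-1)m$ with leading coefficient $d\,a_d^m\neq 0$) — shows that $\deg(\Delta H)$ strictly exceeds each of $\deg\Delta(h(z))$, $\deg R$, and $\deg E$, contradicting the displayed identity. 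Hence $v^{j(h)}=1$; then $vg_0$ commutes with $h$ by the easy inclusion and has the same leading coefficient as $g$, so $g=vg_0$ by the uniqueness above.

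This proves the normalized statement, and undoing the conjugation by $z\mapsto z-\beta$ turns $ug_0$ into $ug(z)+(u-1)\beta$, completing the proof. (An alternative route for the middle steps would be to pass to the B\"ottcher coordinate $\psi$ at $\infty$, where $\psi\circ h\circ\psi^{-1}(w)=w^d$ forces $\psi\circ g\circ\psi^{-1}(w)=(\text{lead coeff})\,w^m$ and hence $g=\psi^{-1}\big((\text{lead coeff})\cdot\psi^m\big)$; but checking that this germ is actually a polynomial is itself delicate, so I would favour the elementary coefficient argument.) The routine-but-fiddly part is keeping track of which $b_{m-k}$ appears where; the genuinely substantive part is the final degree count in the third paragraph.
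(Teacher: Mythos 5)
The paper does not prove this statement: it is quoted verbatim from \cite[Theorem~4]{Boyce} and used as a black box (alongside the Julia--Ritt theorem), so there is no in-paper argument to compare your proposal against. What you have written is a from-scratch reconstruction, and after checking the details I believe it is correct. The conjugation reduction to $\beta=0$, the ``easy inclusion'' (which correctly observes that $a_i(u^i-u)=0$ for all $i$ with $a_i\neq 0$ is exactly the condition $u^{j(h)}=1$), the descending coefficient recursion showing $b_m^{\,d-1}=a_d^{\,m-1}$, $b_{m-1}=0$, and uniqueness of $g$ given $b_m$, and the concluding degree count on $\Delta(h)=R+\Delta H+E$ all go through. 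Two points worth making explicit in a polished write-up. First, the normalization $a_{d-1}=0$ is used twice in the final step, not once: $v^{d-1}=1$ kills the $i=d$ term of $R$, and $a_{d-1}=0$ kills the $i=d-1$ term; together these give $\deg R\le (d-2)m$, which is what prevents $R$ from competing with $\Delta H$ (of degree $\deg\Delta+(d-1)m$) when $\deg\Delta=0$. Second, the statement and your argument both implicitly assume $m\ge 1$ (the recursion and the relation $j(h)\mid d-1$ use $a_d\neq 0$ with $g$ nonconstant, and the theorem is actually false for $m=0$: $h(z)=z^d$ has $d$ fixed points while $j(h)=d-1$); this is harmless for the paper, where $L$ and $\tilde f$ always have positive degree, but should be stated.
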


This will allow us to completely understand the choices for both $L$ and $\tilde{f}$, modulo the calculation of $j(f^n_\beta)$, as soon as we have identified suitable seed polynomials $g$. In the linear case, we may take $g(z) = z$. The following result, 
a form of Ritt's theorem \cite{Ritt} as it appears in Boyce \cite{Boyce}, tells us how to proceed in the nonlinear case:

\begin{thm}\cite{Julia,Ritt} \label{thm:JR}
If two polynomials $f$ and $g$ commute under composition, then up to conjugation with the same linear polynomial, either both are power functions, both are Chebyshev polynomials, or both are roots of unity times iterates of the same polynomial (in which case the roots of unity themselves will commute with the latter polynomial).
\end{thm}

Since we are assuming that $f(z)$ is not conjugate to either $z^d$ or $\pm T_d(z)$, in the nonlinear case we may then take $g(z)$ to be a polynomial of minimal degree, some iterate of which is equal to $f(z)$ or to $f(z)$ times a root of unity that commutes with $g$. Note that such a polynomial will commute with all iterates of $f$, and will have minimal degree among all nonlinear polynomials which commute with any iterate of $f$.

We now wish to determine, for a fixed monic polynomial $f(z)$, what values are taken by the quantity $j(f^n_\beta)$. It will simplify the notation to index the coefficients of $f^n_\beta$ in order of decreasing degree, so we denote by $c_{n, i}$ the degree-$(d^n - i)$ coefficient of $f^n_\beta$; note that $c_{n, 0} = 1$ 
and $c_{n, 1} = 0$ for all $n$. Let $r$ denote the greatest common divisor of those indices $i$ for which $c_{1, i} \neq 0$. As $\gcd(d - 1, d - i - 1) = \gcd(d - 1, i)$ for any $i$, we have $j(f_\beta) = \gcd(d - 1, r)$.

Although $j(f^n_\beta)$ is defined in terms of the full set of coefficients of $f^n_\beta$, it turns out that it is in fact completely determined by its largest-degree coefficients $c_{n, 0}, \ldots, c_{n, d}$. We elaborate on this below.

\begin{lem}
For $i = 0, \ldots, d$, the degree-$(d^n - i)$ coefficient $c_{n, i}$ of $f^n_\beta(z)$ is equal to the degree-$(d^n - i)$ coefficient of $(f_\beta(z))^{d^{n-1}}$. In particular,
	\[ c_{n, i} = \sum_{i_1 + \cdots + i_{d^{n-1}} = i} c_{1, i_1} \cdots c_{1, i_{d^{n-1}}}. \]
\end{lem}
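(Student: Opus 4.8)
The plan is to compare $f^n_\beta$ with the power $(f_\beta)^{d^{n-1}}$ and show their top $d+1$ coefficients agree, exploiting the functional equation $f^n_\beta = f^{n-1}_\beta \circ f_\beta$ together with the fact that $f^{n-1}_\beta$ is monic. First I would write $f^{n-1}_\beta(z) = z^{d^{n-1}} + \sum_{k \ge 2} c_{n-1, k} z^{d^{n-1} - k}$, using that the leading coefficient is $1$ and the subleading ($z^{d^{n-1}-1}$) coefficient vanishes (this is exactly the normalization achieved by conjugating with $z \mapsto z - \beta$, and it is preserved under iteration since $\beta$ is invariant under passing to iterates). Then I would substitute $z = f_\beta(w)$, so that
\[
f^n_\beta(w) = (f_\beta(w))^{d^{n-1}} + \sum_{k \ge 2} c_{n-1, k} (f_\beta(w))^{d^{n-1} - k}.
\]
Since $\deg f_\beta = d$, the $k$-th term on the right has degree $d \cdot (d^{n-1} - k) = d^n - dk$, which for $k \ge 2$ is at most $d^n - 2d \le d^n - d - 1$ (as $d \ge 2$). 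Hence every term in the sum only affects coefficients of $f^n_\beta$ in degrees strictly below $d^n - d$, and so for $i = 0, 1, \ldots, d$ the degree-$(d^n - i)$ coefficient of $f^n_\beta$ equals that of $(f_\beta)^{d^{n-1}}$, which is the first claim.

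For the explicit formula, I would simply expand $(f_\beta(z))^{d^{n-1}}$ via the multinomial theorem. Writing $f_\beta(z) = \sum_{i \ge 0} c_{1, i} z^{d - i}$ (with $c_{1,0} = 1$, $c_{1,1} = 0$), a monomial $z^{d^n - i}$ arises from choosing exponents $i_1, \ldots, i_{d^{n-1}}$ with $\sum_\ell (d - i_\ell) = d^n - i$, i.e. $\sum_\ell i_\ell = i$, contributing $c_{1, i_1} \cdots c_{1, i_{d^{n-1}}}$; summing over all such tuples gives
\[
c_{n, i} = \sum_{i_1 + \cdots + i_{d^{n-1}} = i} c_{1, i_1} \cdots c_{1, i_{d^{n-1}}},
\]
valid for $0 \le i \le d$ by the first part. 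One should note the book-keeping point that the sum is over ordered tuples of nonnegative integers (so that $c_{n,0} = 1$ and $c_{n,1} = 0$ come out correctly), and that although $c_{1,i}$ is only literally a coefficient of $f_\beta$ for $i \le d$, any tuple with some $i_\ell > d$ contributes a factor $c_{1, i_\ell} = 0$, so extending the range harmlessly or truncating it both give the same answer.

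The main obstacle is really just making the degree bookkeeping airtight: one must be careful that the subleading coefficient of $f^{n-1}_\beta$ genuinely vanishes (otherwise the term $c_{n-1,1}(f_\beta)^{d^{n-1}-1}$ would have degree $d^n - d$ and would perturb $c_{n,d}$), and one must check the inequality $d^n - dk \le d^n - d - 1$ for all $k \ge 2$ and $d \ge 2$, which is where the hypothesis $d \ge 2$ and the cutoff at $i = d$ are both used. Everything else is a routine multinomial expansion.
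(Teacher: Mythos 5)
Your proof is correct, and it takes a genuinely different (and slightly more streamlined) route than the paper's. The paper writes $f^n_\beta = f_\beta \circ f^{n-1}_\beta$, i.e.\ puts the single iterate on the \emph{outside}. This produces $(f^{n-1}_\beta)^d$ as the dominant term, so an intermediate observation is needed (that the top $d+1$ coefficients of $(f^{n-1}_\beta)^d$ depend only on the top $d+1$ coefficients of $f^{n-1}_\beta$), followed by an induction to cascade down from $f^{n-1}_\beta$ to $f_\beta$ and arrive at $(f_\beta)^{d^{n-1}}$. You instead write $f^n_\beta = f^{n-1}_\beta \circ f_\beta$, putting the single iterate on the \emph{inside}. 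This produces $(f_\beta)^{d^{n-1}}$ as the dominant term immediately, and the whole argument collapses to a single degree estimate: the tail $\sum_{k \ge 2} c_{n-1,k}(f_\beta)^{d^{n-1}-k}$ has degree at most $d^n - 2d < d^n - d$, so it cannot touch the top $d+1$ coefficients. No induction and no auxiliary step about powers are required. Both proofs rely crucially on the vanishing of the subleading coefficient $c_{n-1,1}$ (equivalently $c_{1,1}$), which you correctly trace to the invariance of $\beta$ under iteration; and both then finish with the same multinomial expansion. One very small quibble: the inequality $d^n - dk \le d^n - d - 1$ for $k \ge 2$ needs only $d \ge 1$, not $d \ge 2$, so that hypothesis isn't really load-bearing at that particular step, though of course $d \ge 2$ is assumed throughout. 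Also note that your argument in fact establishes agreement of coefficients for all $i \le 2d - 1$, a bit more than the stated range $i \le d$.
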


\begin{proof}
Notice that when computing $f^n_\beta(z)$ as $f_\beta(f^{n-1}_\beta(z))$ for $n \geq 2$, the contribution to the first $d + 1$ coefficients comes exclusively from the term $(f^{n-1}_\beta(z))^d$, since $f_\beta$ has no degree-$(d - 1)$ term, and the highest coefficient of $(f^{n-1}_\beta(z))^{d - 2}$ has degree $d^{n - 1}(d - 2) = d^n - 2d^{n-1} < d^n - d$.

Further, the first $d + 1$ coefficients of $(f^{n - 1}_\beta(z))^d$ are determined by the first $d + 1$ coefficients of $f^{n-1}_\beta(z)$, since any contribution of the coefficient of degree $d^{n-1} - d - 1$ or below will have at most degree $d^{n-1}(d - 1) + d^{n-1} - d - 1 = d^n - d - 1$.

Thus the first $d + 1$ coefficients of $f^n_\beta(z)$ are determined by the first $d + 1$ coefficients of $f^{n-1}_\beta(z)$, from the term of the form $(f^{n-1}_\beta(z))^n$. Proceeding by induction, we conclude that the first $d + 1$ coefficients of $f^n_\beta(z)$ are determined by the first $d + 1$ coefficients of $f_\beta(z)$ (i.e.\ all the coefficients of $f_\beta(z)$), from a term of the form $(f_\beta(z))^{d^{n-1}}$.

In other words, the first $d + 1$ coefficients of $f^n_\beta(z)$ are given by the first $d + 1$ coefficients of $(f_\beta(z))^{d^{n-1}}$.
\end{proof}

\begin{lem}
\label{j-geq}
Let $r = \gcd(i\, :\, c_{1, i} \neq 0)$, as above. Then $c_{n, i} = 0$ unless $i \equiv d^n \pmod{r}$.
\end{lem}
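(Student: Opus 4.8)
The plan is to prove the lemma by a grading argument, rather than by pushing the explicit coefficient formula of the previous lemma: that formula only describes the top $d+1$ coefficients $c_{n,0},\dots,c_{n,d}$, while the present statement concerns all $i$, so one wants a device that treats every exponent uniformly.

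First I would reinterpret the conclusion. Since $c_{n,i}$ is the coefficient of $z^{d^n-i}$ in $f^n_\beta$, the claim is equivalent to saying that every monomial occurring in $f^n_\beta(z)$ has exponent congruent to $d^n$ modulo $r$. For $n=1$ this is just the definition of $r$: if $c_{1,i}\neq 0$ then $r\mid i$, so the exponent $d-i$ is $\equiv d\pmod r$. (Note $r\geq 1$, since $f_\beta=z^d$ would make $f$ conjugate to $z^d$, which is excluded; so the set whose gcd defines $r$ contains a positive integer.)

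Next I would install the $\Z/r\Z$-grading on $\C[z]$ that puts $z^k$ in degree $k\bmod r$, so that ``homogeneous of degree $c$'' means exactly ``all monomials have exponent $\equiv c\pmod r$''; the previous paragraph says $f_\beta$ is homogeneous of degree $d$. The key elementary fact is that composition is multiplicative on degrees of homogeneous elements: if $g=\sum_k g_k z^k$ is homogeneous of degree $c$ and $h$ is homogeneous of degree $e$, then each surviving term $g_k h^k$ has $k\equiv c$, hence all its exponents are $\equiv ke\equiv ce\pmod r$, so $g\circ h$ is homogeneous of degree $ce$. Feeding $f^n_\beta=f_\beta\circ f^{n-1}_\beta$ into this inductively gives that $f^n_\beta$ is homogeneous of degree $d^n$ modulo $r$, which is the claim. (If one prefers to avoid grading language, the identical computation is a bare induction on $n$ using $f^n_\beta=\sum_i c_{1,i}\,(f^{n-1}_\beta)^{d-i}$ together with $(d-i)d^{n-1}\equiv d\cdot d^{n-1}=d^n\pmod r$, valid whenever $c_{1,i}\neq 0$.)

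I do not expect a genuine obstacle. The only subtlety is checking that the many cross terms created by a composition really all land in a single graded piece, and this is precisely the point at which one must use that $r$ divides \emph{every} gap $i$ with $c_{1,i}\neq 0$, not merely the smallest one; beyond that the argument is routine.
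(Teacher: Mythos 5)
Your proof is correct and follows essentially the same route as the paper's: both induct on $n$ via $f^n_\beta = f_\beta \circ f^{n-1}_\beta$, using that $r \mid i$ whenever $c_{1,i} \neq 0$, so that every exponent appearing in $c_{1,i}\,(f^{n-1}_\beta)^{d-i}$ is $\equiv (d-i)d^{n-1} \equiv d^{n} \pmod r$; your $\Z/r\Z$-grading is a clean way to package this, but, as your own parenthetical notes, it is the identical computation. One caution about your opening paragraph, though: the ``reinterpretation'' is not literally equivalent to the statement as printed. Since $c_{n,i}$ is the coefficient of $z^{d^n-i}$, the printed condition ``$i \equiv d^n \pmod r$'' is equivalent to ``exponent $\equiv 0 \pmod r$,'' whereas what you (correctly) prove is ``exponent $\equiv d^n \pmod r$,'' i.e.\ ``$i \equiv 0 \pmod r$.'' Your version is the right one: it is what the $n=1$ base case actually gives from the definition of $r$, it is what the paper's own induction establishes, and it is what the subsequent corollary $j(f^n_\beta) = \gcd(d^n-1,r)$ requires. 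The lemma as stated appears to carry an off-by-$d^n$ slip --- already at $n=1$ the example $f_\beta(z) = z^{14}-z^2$ has $c_{1,0}=1$ and $c_{1,12}=-1$ nonzero with $r=12$, but neither $0$ nor $12$ is $\equiv 14 \pmod{12}$ --- which you, like the paper's proof, have silently repaired; it is worth recording the correction explicitly rather than asserting an equivalence that does not hold.
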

\begin{proof}
We have $c_{1, i} = 0$ unless $i \equiv d \pmod{r}$ by construction. Suppose towards induction that the statement holds for some fixed $n$. If one multiplies $kr + d$ monomials of degree equivalent to $d^n$ modulo $r$, the product has degree equivalent to $(kr + d)(d^n) \equiv d^{n+1} \pmod{r}$. The expression $(f^n_\beta(z))^{kr + d}$ is thus a sum of monomials whose degrees are equivalent to $d^{n+1}$ modulo $r$, and it follows that $f^{n+1}_\beta(z) = f_\beta(f^n_\beta(z))$ is also.
\end{proof}

\begin{lem}
\label{j-leq}
Enumerate the indices $i_0 < \ldots < i_m$ for which $c_{1, i_j} \neq 0$; note that $i_0 = 0$, and since $f$ is not conjugate to a power function, there is at least one nonzero coefficient $c_{1, i}$ with $i > 0$. Let
	\[ I = \{ i_k \, :\, \gcd(i_0, \ldots, i_k) \neq \gcd(i_0, \ldots, i_{k-1}) \}; \]
this set always contains at least $i_0$ and $i_1$. Then $c_{n, i} \neq 0$ for all $n$ and for all $i \in I$.
\end{lem}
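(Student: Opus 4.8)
The plan is to reduce the claim, via the coefficient lemma above (which expresses the top $d+1$ coefficients of $f^n_\beta$ through $\bigl(f_\beta\bigr)^{d^{n-1}}$), to a generating-function identity, and then to an elementary statement about nonnegative integer combinations of the exponents $i_1 < \cdots < i_m$.

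First I would note that every element of $I$ lies in $\{i_0, \dots, i_m\} \subseteq \{0, 1, \dots, d\}$, so that lemma applies: for $i \in I$ the coefficient $c_{n,i}$ equals the degree-$(d^n - i)$ coefficient of $\bigl(f_\beta(z)\bigr)^{d^{n-1}}$. Factoring $f_\beta(z) = z^d\bigl(1 + P(1/z)\bigr)$ with $P(t) = \sum_{j=1}^m c_{1,i_j}\,t^{i_j}$ (where $2 \le i_1 < \cdots < i_m \le d$ and each $c_{1,i_j} \ne 0$ by construction), this becomes, by the multinomial theorem,
\[
c_{n,i} = [t^{i}]\bigl(1 + P(t)\bigr)^{d^{n-1}} = \sum_{(a_j)} \binom{d^{n-1}}{a_0,\, a_1,\, \dots,\, a_m}\prod_{j=1}^{m} c_{1,i_j}^{\,a_j},
\]
where $a_0 := d^{n-1} - (a_1 + \cdots + a_m)$ and the sum runs over all nonnegative tuples $(a_1, \dots, a_m)$ with $a_1 i_1 + \cdots + a_m i_m = i$ and $a_1 + \cdots + a_m \le d^{n-1}$. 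For $i = i_0 = 0$ this reads $c_{n,0} = 1 \ne 0$, so I would then assume $i = i_k$ with $k \ge 1$.

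The heart of the argument is to show that for $i = i_k \in I$ the indexing set above consists of the single tuple with $a_k = 1$ and all other $a_j = 0$. By definition of $I$ the running gcd strictly decreases at $i_k$, i.e.\ $g_{k-1} := \gcd(i_0, \dots, i_{k-1})$ does not divide $i_k$. Given any admissible tuple with $\sum_j a_j i_j = i_k$, the tail $\sum_{j \ge k} a_j i_j = i_k - \sum_{j < k} a_j i_j$ is not a multiple of $g_{k-1}$ (as $i_k$ is not a multiple of $g_{k-1}$ while $\sum_{j<k} a_j i_j$ is), hence it is nonzero, so $a_j \ge 1$ for some $j \ge k$. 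Because $i_j \ge i_k$ for $j \ge k$, this forces $\sum_{j \ge k} a_j i_j \ge i_k$, with equality only when $a_k = 1$ and $a_j = 0$ for $j > k$; but this tail is also $\le i_k$, so equality does hold, and then $\sum_{j<k} a_j i_j = 0$ forces $a_j = 0$ for $j < k$ too. Substituting this unique tuple, $c_{n,i_k} = \binom{d^{n-1}}{d^{n-1}-1,\,1}\,c_{1,i_k} = d^{n-1}\,c_{1,i_k}$, which is nonzero for every $n \ge 1$ since $c_{1,i_k} \ne 0$ and $d \ge 2$.

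I do not expect a serious obstacle here: once the generating-function reduction is in place the rest is a direct computation. The only points needing care are the degenerate conventions at $k = 1$ (where $g_0 = \gcd(i_0) = 0$, so the statement ``$g_0 \nmid i_1$'' simply means $i_1 \ne 0$) and at $i_0 = 0$ (where $c_{n,0} = 1$ rather than the generic value $d^{n-1}c_{1,i}$), together with confirming that the coefficient lemma genuinely covers the full range $0 \le i \le d$ of exponents occurring in $I$.
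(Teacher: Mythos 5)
Your proof is correct and follows essentially the same route as the paper's: both reduce the claim to the preceding coefficient lemma (identifying $c_{n,i}$ for $i \le d$ with a coefficient of $(f_\beta)^{d^{n-1}}$), and both extract the key fact that the condition $r_{k-1} \nmid i_k$ forces the unique decomposition $i_k = 1\cdot i_k + 0 + \cdots + 0$ among $d^{n-1}$-fold sums of indices with nonzero $c_{1,\cdot}$, yielding $c_{n,i_k} = d^{n-1} c_{1,i_k} \neq 0$. The only difference is cosmetic: you package the expansion as a multinomial sum with an explicit indexing set and treat the edge cases ($k=0$, $g_0$, the bound $a_0 \ge 0$) more carefully, whereas the paper phrases the same uniqueness argument directly in terms of $d^{n-1}$-tuples of indices and leaves those checks implicit. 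The mathematical content is identical, so there is nothing to repair.
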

\begin{proof}
For $k = 0, \ldots, m$, let $r_k = \gcd(i_0, \ldots, i_k)$, so that $I = \{i_k\, :\, r_k \neq r_{k-1}\}$.

As $i_1$ is the smallest positive index $i$ such that $c_{1, i} \neq 0$, it is also the smallest positive index such that $c_{n, i} \neq 0$, with $c_{n, i_1} = d^{n-1} c_{1, i_1}$ in this case.

Suppose that $i_k \in I$. It follows that $c_{n, i} = 0$ for $i < i_k$ with $i \not\equiv 0 \pmod{r_{k-1}}$. The coefficient $c_{n, i_k}$ is the sum of terms of the form
	\[ \prod_{s} c_{1, s}, \]
where the product runs over the entries of a $d^{n-1}$-tuple of indices summing to $i_k$. Since $r_k \neq r_{k-1}$, the integer $r_{k-1}$ does not divide $i_k$, so if $s < i_k$ for each index $s$, then at least one index has $s \not\equiv 0 \pmod{r_{k-1}}$, and the corresponding coefficient $c_{1, s}$ is thus zero. So the only nonzero terms have one index $s$ equal to $i_k$ and the others equal to 0; in other words,
	\[ c_{n, i_k} = d^{n-1} c_{1, i_k} \neq 0.\qedhere \]
\end{proof}

\begin{cor}
We have $j(f^n_\beta) = \gcd(d^n - 1, r)$.
\end{cor}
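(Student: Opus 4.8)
The plan is to rewrite $j(f^n_\beta)$ directly in terms of the set $\{\,i : c_{n,i}\neq 0\,\}$ of indices at which $f^n_\beta$ has a nonzero coefficient, and then to observe that Lemmas~\ref{j-geq} and~\ref{j-leq} together pin down the greatest common divisor of this set to be exactly $r$ for every $n$ --- just as it is for $n=1$ --- from which the stated identity drops out at once.

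Concretely, I would first unwind the definition of $j$. Since the coefficient of $z^k$ in $f^n_\beta(z)$ is $c_{n,\,d^n-k}$, writing $k=d^n-i$ gives
\[
j(f^n_\beta)=\gcd\bigl\{\,k-1 \;:\; [z^k]f^n_\beta\neq 0\,\bigr\}
=\gcd\bigl\{\,(d^n-1)-i \;:\; c_{n,i}\neq 0\,\bigr\}.
\]
The index $i=0$ occurs, because $c_{n,0}=1$, and contributes $d^n-1$ to this gcd, while $i=1$ does not occur, because $c_{n,1}=0$. Applying the elementary identity $\gcd\bigl(d^n-1,(d^n-1)-i\bigr)=\gcd(d^n-1,i)$ to every other term, the whole expression collapses to
\[
j(f^n_\beta)=\gcd\Bigl(d^n-1,\ \gcd\{\,i:c_{n,i}\neq 0\,\}\Bigr),
\]
so it remains only to show $\gcd\{\,i:c_{n,i}\neq 0\,\}=r$.

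One inclusion is Lemma~\ref{j-geq}: every $i$ with $c_{n,i}\neq 0$ is a multiple of $r$, so $r\mid\gcd\{\,i:c_{n,i}\neq 0\,\}$. For the opposite divisibility I would use Lemma~\ref{j-leq}, which guarantees $c_{n,i}\neq 0$ for every $i$ in the set $I$ and every $n$; since $I$ is defined as the set of indices at which the running greatest common divisor $\gcd(i_0,\dots,i_k)$ strictly decreases, one has $\gcd(I)=\gcd(i_0,\dots,i_m)=r$, and then $I\subseteq\{\,i:c_{n,i}\neq 0\,\}$ forces $\gcd\{\,i:c_{n,i}\neq 0\,\}\mid r$. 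Combining the two divisibilities gives $\gcd\{\,i:c_{n,i}\neq 0\,\}=r$, and hence $j(f^n_\beta)=\gcd(d^n-1,r)$. There is no genuine obstacle here beyond careful bookkeeping: keeping straight that $c_{n,i}$ is indexed by the codegree $d^n-i$ (so that $j$ concerns $k-1=(d^n-1)-i$), not forgetting that the constant-term data $c_{n,0}=1$ and $c_{n,1}=0$ is what places $d^n-1$ inside the gcd, and noting the trivial fact that the recursively defined set $I$ has gcd equal to $r$; no input beyond the two preceding lemmas is required.
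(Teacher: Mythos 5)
Your proof is correct and takes essentially the same route as the paper, obtaining one divisibility from Lemma~\ref{j-geq} and the other from Lemma~\ref{j-leq}; your reorganization through the intermediate claim $\gcd\{i : c_{n,i} \neq 0\} = r$ for every $n$ is a cleaner bookkeeping of the same argument. One point worth flagging: you read Lemma~\ref{j-geq} as asserting $r \mid i$ whenever $c_{n,i} \neq 0$, whereas the lemma as printed says $i \equiv d^n \pmod{r}$, which together with $c_{n,0} = 1$ would force $r \mid d^n$ and fails in general (take $f_\beta(z) = z^3 + z$, where $r = 2$ and $d = 3$); your reading is what that lemma's proof actually establishes (nonzero terms of $f^n_\beta$ lie in degree $\equiv d^n \pmod{r}$, equivalently codegree $i \equiv 0 \pmod{r}$), and it is the version needed here.
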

\begin{proof}
It follows from Lemma~\ref{j-geq} that $j(f^n_\beta) \geq \gcd(d^n - 1, r)$. From Lemma~\ref{j-leq}, we know that the degree-$(d^n - i)$ coefficient of $f^n_\beta(z)$ is nonzero for each $i \in I$, and as the greatest common divisor of the elements of $I$ is $r$, it follows that $j(f^n_\beta) \leq \gcd(d^n - 1, r)$ also.
\end{proof}

\begin{prop} \label{prop:ftildeL}
Suppose that $r = \gcd(i \, :\, c_{1, i} \neq 0)$ has prime factorization $\prod p_i^{e_i}$. Let
	\[ r' = \prod_{p_i \nmid d} p_i^{e_i}. \]
Then there are exactly $r'$ choices for the linear polynomial $L(z)$ and for the nonlinear polynomial $\tilde{f}(z)$ appearing in the statement of Theorem \ref{thm:mainresult}: the linear polynomials commuting with some iterate of $f$ are exactly those of the form $L(z) = uz + (u - 1)\beta$ for $u$ an $(r')$-th root of unity, and the nonlinear polynomials of minimal degree commuting with some iterate of $f$ are exactly those of the form $u\tilde{f}_0(z) + (u - 1)\beta$, where $\tilde{f}_0(z)$ is a polynomial of minimal degree, some iterate of which is equal to $f$, and $u$ is an $(r')$-th root of unity. 
\end{prop}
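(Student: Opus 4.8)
The plan is to count the $r'$-th roots of unity that arise as the scalar $u$ in Boyce's Theorem~\ref{boyce4}, by determining exactly which $j(f^n_\beta)$ occur as $n$ ranges over the positive integers. From the Corollary above we know $j(f^n_\beta) = \gcd(d^n - 1, r)$, so the collection of linear polynomials commuting with \emph{some} iterate of $f$ is precisely $\bigcup_n \{ uz + (u-1)\beta : u^{j(f^n_\beta)} = 1 \}$, and similarly for the nonlinear case with seed $\tilde f_0$. The key arithmetic fact to establish is that $\bigcup_n \{ m : m \mid \gcd(d^n - 1, r)\} = \{ m : m \mid r' \}$, equivalently that $\mathrm{lcm}_n \gcd(d^n - 1, r) = r'$. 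First I would observe that a prime power $p_i^{e_i} \| r$ divides $\gcd(d^n - 1, r)$ for some $n$ if and only if $p_i^{e_i} \mid d^n - 1$ for some $n$, i.e.\ if and only if $d$ is a unit modulo $p_i^{e_i}$, which holds exactly when $p_i \nmid d$. So the prime powers of $r$ that ever appear (and when they do, appear to the full power $e_i$, taking $n$ a suitable multiple of the multiplicative order of $d$ mod $p_i^{e_i}$) are precisely those dividing $r'$; by the Chinese Remainder Theorem one can moreover choose a single $n$ realizing all of them at once, namely $n = \mathrm{lcm}$ of the relevant orders, giving $\gcd(d^n - 1, r) = r'$ for that $n$.

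Next I would assemble the geometric/dynamical conclusion. By the Ritt--Julia theorem quoted above, since $f$ is not conjugate to a power function or Chebyshev polynomial, every nonlinear polynomial commuting with an iterate of $f$ is, up to a linear conjugacy, an iterate of a common $\tilde f_0$ of minimal degree; and every linear polynomial commuting with an iterate of $f$ has the seed $g(z) = z$. Feeding these seeds into Theorem~\ref{boyce4} and intersecting over all $n$ (using that $\beta$ is iteration-invariant, as noted before the statement of Theorem~\ref{boyce4}), the set of valid $u$ is exactly the group of $r'$-th roots of unity. This yields the stated parametrizations $L(z) = uz + (u-1)\beta$ and $\tilde f(z) = u\tilde f_0(z) + (u-1)\beta$, each giving exactly $r'$ polynomials since distinct roots of unity give distinct polynomials (compare leading-or-linear coefficients). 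One should also check the degenerate possibility $r' = 1$ is handled uniformly: then only $u = 1$ survives, $L(z) = z$ and $\tilde f = \tilde f_0$, consistent with the count.

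The main obstacle I anticipate is the arithmetic bookkeeping in the "only if" direction: showing that a prime $p_i \mid d$ contributes \emph{nothing} — that is, $p_i^{e_i}$ can never divide $\gcd(d^n-1,r)$ — and, more delicately, that a prime $p_i \nmid d$ contributes the \emph{full} exponent $e_i$ rather than a smaller power. The former is immediate since $d^n \equiv 0 \pmod{p_i}$ forces $p_i \nmid d^n - 1$. For the latter one must invoke that $d$ is invertible mod $p_i^{e_i}$ and that the multiplicative group $(\Z/p_i^{e_i}\Z)^\times$ is finite, so some power of $d$ is $\equiv 1$; a small subtlety is that the orders for different primes need not coincide, which is why one passes to their lcm and applies CRT to get one common $n$. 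Everything else — feeding seeds into Boyce, the invariance of $\beta$, the injectivity of $u \mapsto uz + (u-1)\beta$ — is routine.
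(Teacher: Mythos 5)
Your proposal is correct and follows essentially the same route as the paper's proof: both identify the seed polynomials via Ritt--Julia, both invoke Boyce's theorem together with the corollary $j(f^n_\beta) = \gcd(d^n - 1, r)$, and both show that the set of attainable $j$-values has lcm exactly $r'$. The only difference is cosmetic: you establish the existence of an $n$ with $\gcd(d^n-1,r) = r'$ prime-by-prime and then combine via the Chinese Remainder Theorem, whereas the paper simply observes that $\gcd(d,r')=1$ makes $d$ a unit modulo $r'$ directly, so a single $n$ with $d^n \equiv 1 \pmod{r'}$ exists without decomposing into prime powers.
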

\begin{proof}
We have shown that $j(f^n_\beta) = \gcd(d^n - 1, r)$. If $p_i \mid d$, then $p_i \nmid d^n - 1$, so also $j(f^n_\beta) = \gcd(d^n - 1, r')$; in particular, $j(f^n_\beta) \mid r'$. As $\gcd(d, r') = 1$, $d$ is a unit in $\Z/r'\Z$, so there exists an $n$ such that $d^n \equiv 1 \pmod{r'}$, and in this case we have $j(f^n_\beta) = r'$. The result is now an immediate consequence of Theorem~\ref{boyce4}.
\end{proof}

\begin{exm}
Consider the polynomial $f(z) = f_\beta(z) = z^{14} - z^2$. Then $d = 14$, $\beta = 0$, $r = 12$, $r' = 3$, and
	\begin{align*}
	d^n &\equiv \begin{cases}
		2 \pmod{12} & \text{if $n = 1$}, \\
		4 \pmod{12} & \text{if $n$ is even}, \\
		8 \pmod{12} & \text{otherwise},
		\end{cases} \\
	&\equiv \begin{cases}
		2 \pmod{3} & \text{if $n$ is odd}, \\
		1 \pmod{3} & \text{if $n$ is even}.
		\end{cases}
	\end{align*}
Thus
	\[ j(f^n_\beta) = \gcd(d^n - 1, 3) = \begin{cases}
		1 & \text{if $n$ is odd}, \\
		3 & \text{if $n$ is even}.
		\end{cases} \]
Thus the linear polynomials commuting with some iterate of $f$ are exactly those of the form $L(z) = uz$ for $u$ a third root of unity. As $d$ is not a power of any smaller integer, $f$ is not an iterate of any polynomial of smaller degree, so the nonlinear polynomials of minimal degree commuting with an iterate of $f$ are exactly those of the form $\tilde{f}(z) = uf(z)$ for $u$ a third root of unity.
\end{exm}

\bibliographystyle{amsalpha}

\bibliography{Bibliography}

\end{document}